\newtheorem{theorem}{Theorem}[section]
\newtheorem{definition}[theorem]{Definition}
\newtheorem{lemma}[theorem]{Lemma}
\newtheorem{assumption}[theorem]{Assumption}
\mathchardef\mhyphen="2D
\newcommand{\R}{\mathbb{R}}
\newcommand{\HH}{\mathcal{H}}
\newcommand{\XX}{\mathcal{X}}
\newcommand{\ra}{\rightarrow}
\newcommand{\Ra}{\Rightarrow}
\newcommand{\La}{\Leftarrow}
\newcommand{\eps}{\epsilon}
\newcommand{\comment}[1]{}
\newcommand{\br}[1]{\left[#1\right]}
\newcommand{\pr}[1]{\left(#1\right)}
\newcommand{\abs}[1]{\left|#1\right|}
\newcommand{\norm}[1]{\left|\left|#1\right|\right|}
\newcommand{\norme}[1]{\norm{#1}} 
\newcommand{\sgn}[1]{\textup{sgn}\pr{#1}}
\newcommand{\lr}[1]{\left\langle#1\right\rangle}
\DeclarePairedDelimiter{\defaultDelim}{[}{]}
\DeclareMathOperator{\capE}{E}
\newcommand{\E}[2][]{\capE_{#1}\defaultDelim*{#2}}
\crefname{lemma}{lemma}{lemmas}
\Crefname{lemma}{Lemma}{Lemmas}
\newcommand{\HGD}{\textsc{Hamiltonian Gradient Descent}\xspace}
\title{Last-iterate convergence rates for min-max optimization}
\author{%
  Jacob Abernethy\thanks{Author order is alphabetical and all authors contributed equally.} \\
  Georgia Institute of Technology\\
  \texttt{prof@gatech.edu} \\
	\And
	Kevin A.~Lai$^\ast$\\
	Georgia Institute of Technology\\
	\texttt{kevinlai@gatech.edu}\\
	\And
	Andre Wibisono$^\ast$\\
	Georgia Institute of Technology\\
	\texttt{wibisono@gatech.edu}
}
\begin{document}

\maketitle

\begin{abstract}%
	While classic work in convex-concave min-max optimization relies on average-iterate convergence results, the emergence of nonconvex applications such as training Generative Adversarial Networks has led to renewed interest in last-iterate convergence guarantees. Proving last-iterate convergence is challenging because many natural algorithms, such as Simultaneous Gradient Descent/Ascent, provably diverge or cycle even in simple convex-concave min-max settings, and previous work on global last-iterate convergence rates has been limited to the bilinear and convex-strongly concave settings. In this work, we show that the \HGD  (HGD) algorithm achieves linear convergence in a variety of more general settings, including convex-concave problems that satisfy a “sufficiently bilinear” condition. We also prove similar convergence rates for the Consensus Optimization (CO) algorithm of \cite{mescheder2017numerics} for some parameter settings of CO.
\end{abstract}


\section{Introduction}
In this paper we consider methods to solve smooth unconstrained min-max optimization problems. In the most classical setting, a min-max objective has the form
\begin{align*}
\textstyle \min_{x_1} \max_{x_2} g(x_1,x_2) 
\end{align*}
where $g:\R^{d} \times \R^{d} \ra \R$ is a smooth objective function with two inputs. The usual goal in such problems is to find a saddle point, also known as a \emph{min-max solution}, which is a pair $(x_1^*,x_2^*)\in \R^{d} \times \R^{d}$ that satisfies
\begin{align}\textstyle
g(x_1^*,x_2) \le g(x_1^*,x_2^*) \le g(x_1,x_2^*) \label{eq:min_max_condition}
\end{align}
for every $x_1 \in \R^{d}$ and $x_2 \in \R^{d}$.
Min-max problems have a long history, going back at least as far as \cite{neumann1928theorie}, which formed the basis of much of modern game theory, and including a great deal of work in the 1950s when algorithms such as \emph{fictitious play} were explored \cite{brown1951iterative, robinson1951iterative}.

The \emph{convex-concave} setting, where we assume $g$ is convex in $x_1$ and concave in $x_2$, is a classic min-max problem that has a number of different applications, such as solving constrained convex optimization problems. While a variety of tools have been developed for this setting, a very popular approach within the machine learning community has been the use of so-called \emph{no-regret algorithms} \cite{cesa2006prediction,hazan2016introduction}. This trick, which was originally developed by \cite{hannan1957approximation} and later emerged in the development of boosting \cite{freund1999adaptive}, provides a simple computational method via repeated play: each of the inputs $x_1$ and $x_2$ are updated iteratively according to no-regret learning protocols, and one can prove that the average-iterates $(\bar{x}_1,\bar{x}_2)$ converge to a min-max solution.

Recently, interest in min-max optimization has surged due to the enormous popularity of Generative Adversarial Networks (GANs), whose training involves solving a nonconvex min-max problem where $x_1$ and $x_2$ correspond to the parameters of two different neural nets \cite{goodfellow2014generative}. The fundamentally nonconvex nature of this problem changes two things. First, it is infeasible to find a ``global'' solution of the min-max objective. Instead, a typical goal in GAN training is to find a local min-max, namely a pair $(x_1^*,x_2^*)$ that satisfies \cref{eq:min_max_condition} for all $(x_1,x_2)$ in some neighborhood of $(x_1^*,x_2^*)$. Second, iterate averaging lacks the theoretical guarantees present in the convex-concave setting. This has motivated research on \emph{last-iterate} convergence guarantees, which are appealing because they more easily carry over from convex to nonconvex settings.

Last-iterate convergence guarantees for min-max problems have been challenging to prove since standard analysis of no-regret algorithms says essentially nothing about last-iterate convergence. Widely used no-regret algorithms, such as Simultaneous Gradient Descent/Ascent (SGDA), fail to converge even in the simple \emph{bilinear} setting where $g(x_1, x_2) = x_1^\top C x_2$ for some arbitrary matrix $C$. SGDA provably cycles in continuous time and diverges in discrete time (see for example \cite{daskalakis2018training, mescheder2018training}). In fact, the full range of Follow-The-Regularized-Leader (FTRL) algorithms provably do not converge in zero-sum games with interior equilibria \cite{mertikopoulos2018cycles}. This occurs because the iterates of the FTRL algorithms exhibit cyclic behavior, a phenomenon commonly observed when training GANs in practice as well.

Much of the recent research on last-iterate convergence in min-max problems has focused on \emph{asymptotic} or \emph{local} convergence \cite{mertikopoulos2018optimistic, mescheder2017numerics, daskalakis2018limit, balduzzi2018mechanics, letcher2018stable, mazumdar2019finding}. While these results are certainly useful, one would ideally like to prove \emph{global} \emph{non-asymptotic} last-iterate convergence rates. Provable global convergence rates allow for quantitative comparison of different algorithms and can aid in choosing learning rates and architectures to ensure fast convergence in practice. Yet despite the extensive amount of literature on convergence rates for convex optimization, very few global last-iterate convergence rates have been proved for min-max problems. Existing work on global last-iterate convergence rates has been limited to the bilinear or convex-strongly concave settings \cite{tseng1995linear,liang2018interaction,du2019linear, mokhtari2019unified}. In particular, the following basic question is still open:
\vspace{-0.1cm}
\begin{center}
 ``What global last-iterate convergence rates are achievable for convex-concave min-max problems?''
\end{center}
\vspace{-0.3cm}

\paragraph{Our Contribution}
Understanding global last-iterate rates in the convex-concave setting is an important stepping stone towards provable last-iterate rates in the nonconvex-nonconcave setting. Motivated by this, we prove new linear last-iterate convergence rates in the convex-concave setting for an algorithm called \HGD (HGD) under weaker assumptions compared to previous results. HGD is gradient descent on the squared norm of the gradient, and it has been mentioned in \cite{mescheder2017numerics,balduzzi2018mechanics}. Our results are the first to show non-asymptotic convergence of an efficient algorithm in settings that not linear or strongly convex in either input. In particular, we introduce a novel ``sufficiently bilinear'' condition on the second-order derivatives of the objective $g$ and show that this condition is sufficient for HGD to achieve linear convergence in convex-concave settings. The ``sufficiently bilinear'' condition appears to be a new sufficient condition for linear convergence rates that is distinct from previously known conditions such as the Polyak-\L{}ojasiewicz (PL) condition or pure bilinearity. Our analysis relies on showing that the squared norm of the gradient satisfies the PL condition in various settings. As a corollary of this result, we can leverage \cite{karimi2016linear} to show that a stochastic version of HGD will have a last-iterate convergence rate of $O(1/\sqrt{k})$ in the ``sufficiently bilinear'' setting. On the practical side, while vanilla HGD has issues training GANs in practice, \cite{mescheder2017numerics} show that a related algorithm known as Consensus Optimization (CO) can effectively train GANs in a variety of settings, including on CIFAR-10 and celebA. We show that CO can be viewed as a perturbation of HGD, which implies that for some parameter settings, CO converges at the same rate as HGD.

We begin in Section~\ref{sec:background} with background material and notation, including some of our key assumptions. In \Cref{sec:hgd}, we discuss Hamiltonian Gradient Descent (HGD), and we present our linear convergence rates for HGD in various settings. In \Cref{sec:proof_hgd}, we present some of the key technical components used to prove our results from \Cref{sec:hgd}. Finally, in \Cref{sec:co}, we present our results for Stochastic HGD and Consensus Optimization. The details of our proofs are in \Cref{app:linalg_proofs}.

\begin{figure}[!htb]
\begin{center}
\includegraphics[height=7cm]{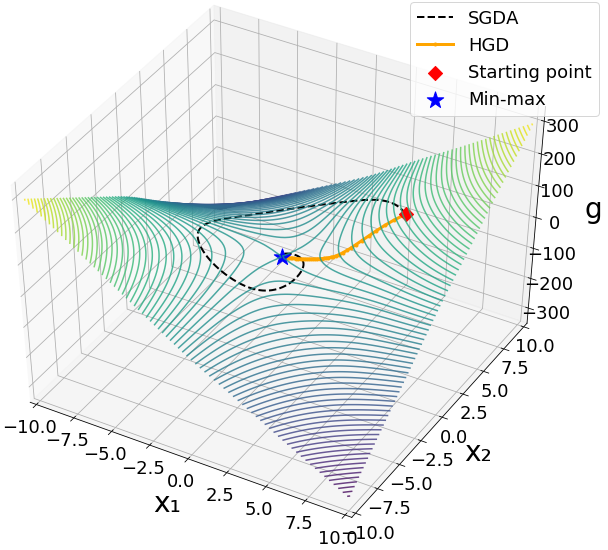}
\end{center}
\caption{HGD converges quickly, while SGDA spirals. This nonconvex-nonconcave objective is defined in \Cref{app:expts}.}
\end{figure}


\FloatBarrier

\section{Background}\label{sec:background}
\subsection{Preliminaries} \label{sec:prelims}
In this section, we discuss some key definitions and notation. We will use $\norme{\cdot}$ to denote the Euclidean norm for vectors or the operator norm for matrices or tensors. For a symmetric matrix $A$, we will use $\lambda_{\min}(A)$ and $\lambda_{\max}(A)$ to denote the smallest and largest eigenvalues of $A$. For a general real matrix $A$, $\sigma_{\min}(A)$ and $\sigma_{\max}(A)$ denote the smallest and largest  singular values of $A$.

\begin{definition}
	A critical point of $f:\R^d \ra \R$ is a point $x\in \R^d$ such that $\nabla f(x) = 0$.
\end{definition}

\begin{definition}[Convexity / Strong convexity]
	Let $\alpha \ge 0$. A function $f:\R^d\ra \R$ is $\alpha$-strongly convex if for any $u,v\in \R^d,f(u) \ge f(v) + \langle \nabla f(v), u-v\rangle + \frac{\alpha}{2} \norme{u - v}$. When $f$ is twice-differentiable, $f$ is $\alpha$-strongly-convex iff for 
	all $x \in \R^d, \nabla^2 f(x) \succeq \alpha I$. If $\alpha = 0$ in either of the above definitions, $f$ is called convex.
\end{definition}

\begin{definition}[Monotone / Strongly monotone]
		Let $\alpha \ge 0$. A vector field $v:\R^d\ra \R^d$ is $\alpha$-strongly monotone if for any $x,y \in \R^d, \langle x - y, v(x) - v(y)\rangle \ge \alpha \norme{x-y}^2$. If $\alpha = 0$, $v$ is called monotone.
\end{definition}


\begin{definition}[Smoothness]
	A function $f:\R^d\ra \R$ is $L$-smooth if $f$ is differentiable everywhere and for all $u,v\in \R^d$ satisfies $\norme{\nabla f(u) - \nabla f(v)} \le L \norme{u-v}$.
\end{definition}

\paragraph{Notation}
Since $g$ is a function of $x_1 \in \R^{d}$ and $x_2\in \R^{d}$, we will often consider $x_1$ and $x_2$ to be components of one vector $x = (x_1\; , \;x_2)$. We will use superscripts to denote iterate indices. Following \cite{balduzzi2018mechanics}, we use $\xi = ( \frac{\partial g}{\partial x_1} , -\frac{\partial g}{\partial x_2} )$ to denote the signed vector of partial derivatives. Under this notation, the Simultaneous Gradient Descent/Ascent (SGDA) algorithm can be written as follows:
\begin{align*}
x^{(k+1)} = x^{(k)} - \eta\xi(x^{(k)})
\end{align*}

We will write the Jacobian of $\xi$ as:
\begin{align*}
 J \equiv \nabla \xi = \begin{pmatrix}
\frac{\partial^2 g}{\partial x_1^2}& \frac{\partial^2 g}{\partial x_1 \partial x_2}\\
-\frac{\partial^2 g}{\partial x_2 \partial x_1} & -\frac{\partial^2 g}{\partial x_2^2}
\end{pmatrix}.
\end{align*}

Note that unlike the Hessian in standard optimization, $J$ is not symmetric, due to the negative sign in $\xi$. When clear from the context, we often omit dependence on $x$ when writing $\xi, J, g, \HH,$ and other functions. Note that $\xi, J$, and $\HH$ are defined for a given objective $g$ -- we omit this dependence as well for notational clarity. We will always assume $g$ is sufficiently differentiable whenever we take derivatives. In particular, we assume second-order differentiability in \Cref{sec:hgd}.

We will also use the following non-standard definition for notational convenience:
\begin{definition}[Higher-order Lipschitz]
	A function $g:\R^d \ra \R$ is $(L_1, L_2, L_3)$-Lipschitz if for all $x\in \R^d$, $\norme{\xi(x)} \le L_1$ and $\norme{\nabla \xi(x)} \le L_2$, and for all $x,y\in \R^d$, $\norme{\nabla \xi(x) - \nabla \xi(y)} \le L_3\norme{x-y}$.
\end{definition}

We will consider a variety of settings for min-max optimization based on properties of the objective function $g:\R^{d} \times \R^{d} \ra \R$. In the convex-concave setting, $g$ is convex as a function of $x_1$ for any fixed $x_2 \in \R^{d}$ and $g$ is concave as a function of $x_2$ for any fixed $x_1 \in \R^{d}$. We can form analogous definitions by replacing the words ``convex'' and ``concave'' with words such as ``strongly convex/concave'', ``linear'', or ``nonconvex''. The \textit{bilinear} setting  refers to the case when $g(x_1,x_2) = x_1^\top C x_2$ for some matrix $C$. The \emph{strongly monotone} setting refers to the case when $\xi$ is a strongly monotone vector field, as in the case when $g$ is strongly convex-strongly concave.

\subsection{Notions of convergence in min-max problems}
The convergence rates in this paper will apply to min-max problems where $g$ satisfies the following assumption:
\begin{assumption}\label{as:no_spurious}
All critical points of the objective $g$ are global min-maxes (i.e. they satisfy \cref{eq:min_max_condition}).
\end{assumption}
In other words, we prove convergence rates to min-maxes in settings where convergence to critical points is necessary and sufficient for convergence to min-maxes. This assumption is true for convex-concave settings, but also holds for some nonconvex-nonconcave settings, as we discuss in \Cref{app:example}. This assumption allows us to measure the convergence of our algorithms to \emph{$\epsilon$-approximate critical points}, defined as follows:

\begin{definition}
	Let $\eps \ge 0$. A point $x \in \R^{d} \times \R^{d}$ is an $\eps$-approximate critical point if $\norme{\xi(x)}\le \eps$.
\end{definition}
Convergence to approximate critical points is a necessary condition for convergence to local or global minima, and it is a natural measure of convergence since the value of $g$ at a given point gives no information about how close we are to a min-max. Our main convergence rate results focus on this first-order notion of convergence, which is sufficient given \Cref{as:no_spurious}. We discuss notions of second-order convergence and ways to adapt our results to the general nonconvex setting in \Cref{app:noncvx}.

\subsection{Related work}
\paragraph{Asymptotic and local convergence}
Several recent papers have given asymptotic or local convergence results for min-max problems. \cite{mertikopoulos2018optimistic} show that the \emph{extragradient} (EG) algorithm converges asymptotically in a broad class of problems known as coherent saddle point problems, which include quasiconvex-quasiconcave problems. However, they do not prove convergence rates. For more general smooth nonconvex min-max problems, a number of different papers have given local stability or local asymptotic convergence results for various algorithms, which we discuss in \Cref{app:noncvx}.

\paragraph{Non-asymptotic convergence rates} Compared to the work on asymptotic convergence, the work on global non-asymptotic last-iterate convergence rates has been limited to much more restrictive settings. A classic result by \cite{rockafellar1976monotone} shows a linear convergence rate for the proximal point method in the bilinear and strongly convex-strongly concave cases. Another classic result, by \cite{tseng1995linear}, shows a linear convergence rate for the extragradient algorithm in the bilinear case. \cite{liang2018interaction} show that a number of algorithms achieve a linear convergence rate in the bilinear case, including Optimistic Mirror Descent (OMD) and Consensus Optimization (CO). They also show that SGDA obtains a linear convergence rate in the strongly convex-strongly concave case. \cite{mokhtari2019unified} show that OMD and EG obtain a linear rate for the strongly convex-strongly concave case, in addition to proving similar results for generalized versions of both algorithms. Finally, \cite{du2019linear} show that SGDA achieves a linear convergence rate for a convex-strongly concave setting with a full column rank linear interaction term.\footnote{Specifically, they assume $g(x_1,x_2) = f(x_1) + x_2^TAx_1 - h(x_2)$, where $f$ is smooth and convex, $h$ is smooth and strongly convex, and $A$ has full column rank. We make a brief comparison of our work to that of \cite{du2019linear} for the convex-strongly concave setting in \Cref{app:du}.} 

\paragraph{Non-uniform average-iterate convergence}
A number of recent works have studied the convergence of non-uniform averages of iterates, which can be viewed as an interpolation between the standard uniform average-iterate and last-iterate. We discuss these works further in \Cref{app:avg}.




\section{Hamiltonian Gradient Descent}\label{sec:hgd}
Our main algorithm for finding saddle points of $g(x_1,x_2)$ is called \HGD (HGD). HGD consists of performing gradient descent on a particular objective function $\HH$ that we refer to as the \emph{Hamiltonian}, following the terminology of \cite{balduzzi2018mechanics}.\footnote{We note that the function $\HH$ is not the Hamiltonian as in the sense of classical physics, as we do not use the symplectic structure in our analysis, but rather we only perform gradient descent on $\HH$.} If we let $\xi := \big( \frac{\partial g}{\partial x_1} , -\frac{\partial g}{\partial x_2} \big)$ be the vector of (appropriately-signed) partial derivatives, then the Hamiltonian is:
\begin{align*}
 \textstyle \HH(x) := \frac12 \|\xi(x)\|^2 = \frac12 \pr{\|\frac{\partial g}{\partial x_1}(x)\|^2 + \|\frac{\partial g}{\partial x_2}(x)\|^2}.
\end{align*}

Since a critical point occurs when $\xi(x) = 0$, we can find a (approximate) critical point by finding a (approximate) minimizer of $\HH$. Moreover, under \Cref{as:no_spurious}, finding a critical point is equivalent to finding a saddle point. This motivates the HGD update procedure on $x^{(k)} = (x_1^{(k)} , x_2^{(k)})$ with step-size $\eta > 0$:
\begin{equation}\label{eq:hgdupdate}
x^{(k+1)} = x^{(k)} - \eta \nabla \HH(x^{(k)}),
\end{equation}

HGD has been mentioned in \cite{mescheder2017numerics, balduzzi2018mechanics}, and it strongly resembles the Consensus Optimization (CO) approach of \cite{mescheder2017numerics}.

The HGD update requires a Hessian-vector product because $\nabla \HH =\xi^\top J $, making HGD a second-order iterative scheme. However, Hessian-vector products are cheap to compute when the objective is defined by a neural net, taking only two gradient oracle calls \cite{pearlmutter1994fast}. This makes the Hessian-vector product oracle a theoretically appealing primitive, and it has been used widely in the nonconvex optimization literature. Since Hessian-vector product oracles are feasible to compute for GANs, many recent algorithms for local min-max nonconvex optimization have also utilized Hessian-vector products \cite{mescheder2017numerics,balduzzi2018mechanics, adolphs2018local, letcher2018stable, mazumdar2019finding}. 

To the best of our knowledge, previous work on last-iterate convergence rates has only focused on how algorithms perform in three particular cases: (a) when the objective $g$ is bilinear, (b) when $g$ is strongly convex-strongly concave, and (c) when $g$ is convex-strongly concave \cite{tseng1995linear,liang2018interaction,du2019linear,mokhtari2019unified}. The existence of methods with provable finite-time guarantees for settings beyond the aforementioned has remained an open problem. This work is the first to show that an efficient algorithm, namely HGD, can achieve non-asymptotic convergence in settings that are not strongly convex or linear in either player. 

\subsection{Convergence Rates for HGD}\label{sec:hgd_rates}
We now state our main theorems for this paper, which show convergence to critical points. When \Cref{as:no_spurious} holds, we get convergence to min-maxes. All of our main results will use the following multi-part assumption:
\begin{assumption}\label{as:main}
	Let $g : \R^{d} \times \R^{d} \to \R$. 
\begin{enumerate}
	\item{Assume a critical point for $g$ exists.}
	\item{Assume $g$ is $(L_1,L_2,L_3)$-Lipschitz and let $L_{\HH} = L_1 L_3 + L_2^2$.}
\end{enumerate}	

\end{assumption}

Our first theorem shows that HGD converges for the strongly convex-strongly concave case. Although simple, this result will help us demonstrate our analysis techniques.

\begin{theorem}\label{thm:hgdconvergence_sc}
Let \Cref{as:main} hold and let $g(x_1, x_2)$ be $c$-strongly convex in $x_1$ and $c$-strongly concave in $x_2$. Then the HGD update procedure described in \eqref{eq:hgdupdate} with step-size $\eta =1/L_{\HH}$ starting from some $x^{(0)} \in \R^{d} \times \R^{d}$ will satisfy
\[\textstyle
\norme{\xi(x^{(k)})} \le \pr{1-\frac{c^2}{L_{\HH}}}^{k/2}\norme{\xi(x^{(0)})}.
\] 
\end{theorem}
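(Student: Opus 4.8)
The plan is to recognize the HGD update \eqref{eq:hgdupdate} as ordinary gradient descent on the Hamiltonian $\HH$, and to show that $\HH$ enjoys two properties that together yield linear convergence: it is $L_{\HH}$-smooth, and it satisfies a Polyak-\L{}ojasiewicz (PL) inequality with parameter $c^2$. Given these, the standard descent-lemma argument gives, for $\eta = 1/L_{\HH}$,
\[
\HH(x^{(k+1)}) \le \HH(x^{(k)}) - \frac{1}{2L_{\HH}}\norme{\nabla \HH(x^{(k)})}^2 \le \pr{1 - \frac{c^2}{L_{\HH}}}\HH(x^{(k)}),
\]
and iterating this, together with the identity $\HH = \tfrac12\norme{\xi}^2$, produces the claimed bound on $\norme{\xi(x^{(k)})}$ after taking square roots.

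First I would establish smoothness. Since $\nabla \HH = J^\top \xi$, I would expand $\nabla \HH(x) - \nabla \HH(y)$ by adding and subtracting $J(x)^\top \xi(y)$, and bound the two resulting terms using the $(L_1,L_2,L_3)$-Lipschitz assumption from \Cref{as:main}: namely $\norme{J(x)} \le L_2$, the map $\xi$ is $L_2$-Lipschitz, $\norme{J(x) - J(y)} \le L_3\norme{x-y}$, and $\norme{\xi(y)} \le L_1$. This yields a Lipschitz constant of $L_2^2 + L_1 L_3 = L_{\HH}$, exactly as defined.

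The heart of the proof — and the step I expect to be the main obstacle — is the PL inequality, which reduces to a uniform lower bound $\sigma_{\min}(J(x)) \ge c$ on the smallest singular value of the \emph{non-symmetric} Jacobian. The key observation is that the symmetric part $\tfrac12(J + J^\top)$ is block-diagonal: the off-diagonal blocks $\frac{\partial^2 g}{\partial x_1 \partial x_2}$ and $-\frac{\partial^2 g}{\partial x_2 \partial x_1}$ cancel (by equality of mixed partials), leaving the block-diagonal matrix with diagonal blocks $\frac{\partial^2 g}{\partial x_1^2}$ and $-\frac{\partial^2 g}{\partial x_2^2}$. Strong convexity in $x_1$ gives $\frac{\partial^2 g}{\partial x_1^2} \succeq cI$ and strong concavity in $x_2$ gives $-\frac{\partial^2 g}{\partial x_2^2} \succeq cI$, so $\tfrac12(J + J^\top) \succeq cI$. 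Hence $J$ is $c$-strongly monotone as a linear map, i.e.\ $u^\top J u \ge c\norme{u}^2$ for all $u$, and Cauchy-Schwarz then gives $\norme{Ju} \ge c\norme{u}$, that is $\sigma_{\min}(J) = \sigma_{\min}(J^\top) \ge c$.

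Finally I would combine the pieces: $\norme{\nabla \HH}^2 = \norme{J^\top \xi}^2 \ge \sigma_{\min}(J)^2\,\norme{\xi}^2 \ge c^2\norme{\xi}^2 = 2c^2\HH$, which is precisely the PL inequality with parameter $c^2$. The subtlety to watch throughout is that $J$ is not symmetric, so one cannot directly invoke eigenvalue bounds; it is the cancellation in the symmetric part — a feature of the signed min-max structure of $\xi$ rather than of a generic smooth function — that makes the singular-value bound, and hence the whole argument, go through.
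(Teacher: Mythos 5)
Your proposal is correct, and its outer scaffolding is exactly the paper's: establish that $\HH$ is $L_{\HH}$-smooth (your add-and-subtract argument is the paper's \Cref{lem:smooth} verbatim) and that $\HH$ satisfies the PL condition with parameter $c^2$, then invoke the standard descent-plus-PL recursion (\Cref{thm:discrete_pl} and \Cref{lem:PL_for_ham}) and take square roots. Where you genuinely diverge is in the one nontrivial step, the bound $\sigma_{\min}(J)\ge c$. The paper proves this via \Cref{lem:eig_lower}: it observes that $JJ^\top = Z^2$ for the \emph{symmetric} matrix $Z$ obtained by flipping the sign of one off-diagonal block, and then rules out eigenvalues of $Z$ of magnitude at most $c$ by a contradiction argument (a Schur-complement-style reduction showing a certain product $-\hat{M}_2^{-1}B^\top\hat{M}_1^{-1}B$ would need a negative eigenvalue while being similar to a PSD matrix). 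You instead note that the off-diagonal blocks of $J+J^\top$ cancel by equality of mixed partials, so $\tfrac12(J+J^\top)=\mathrm{diag}(\nabla^2_{x_1x_1}g,\,-\nabla^2_{x_2x_2}g)\succeq cI$, i.e.\ $J$ is $c$-strongly monotone, and Cauchy--Schwarz gives $\norme{Ju}\ge c\norme{u}$ directly. Your route is shorter and more elementary, and it makes transparent that the real hypothesis is strong monotonicity of $\xi$ rather than strong convexity-concavity per se; the paper's heavier lemma is the one whose proof technique generalizes to \Cref{lem:eig_convex} and \Cref{lem:SC-noncvx-eig}, where the diagonal blocks may vanish and the monotonicity argument gives nothing. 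One small bookkeeping point: the PL condition requires knowing $\min\HH=0$, which you should tie explicitly to the existence of a critical point guaranteed by \Cref{as:main}, as \Cref{lem:ham_pl} does.
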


Next, we show that HGD converges when $g$ is linear in one of its arguments and the cross-derivative is full rank. This setting allows a slightly tighter analysis compared to \Cref{thm:hgdconvergence}.

\begin{theorem}\label{thm:hgdconvergence_linear}
Let \Cref{as:main} hold and let $g(x_1, x_2)$ be $L$-smooth in $x_1$ and linear in $x_2$, and assume the cross derivative $\nabla^2_{x_1, x_2} g$ is full rank with all singular values at least $ \gamma>0$ for all $x \in \R^{d} \times\R^{d}$. Then the HGD update procedure described in \eqref{eq:hgdupdate} with step-size $\eta =1/L_{\HH}$ starting from some $x^{(0)} \in \R^{d} \times \R^{d}$ will satisfy
		\begin{align*}
\textstyle\norme{\xi(x^{(k)})} \le \pr{1 - \frac{\gamma^4 }{(2\gamma^2 + L^2)L_{\HH}}}^{k/2}\norme{\xi(x^{(0)})}.
		\end{align*}
\end{theorem}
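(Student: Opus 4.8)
The plan is to run the standard gradient-descent template on the Hamiltonian $\HH = \frac12\norme{\xi}^2$, combining $L_{\HH}$-smoothness with a Polyak--\L{}ojasiewicz (PL) inequality whose constant is controlled by the least singular value of the Jacobian $J = \nabla\xi$. Since $\nabla\HH = J^\top\xi$ and $g$ is $(L_1,L_2,L_3)$-Lipschitz, one first checks that $\HH$ is $L_{\HH}$-smooth with $L_{\HH}=L_1L_3+L_2^2$: writing $\nabla\HH(x)-\nabla\HH(y) = J(x)^\top(\xi(x)-\xi(y)) + (J(x)-J(y))^\top\xi(y)$ and bounding the two pieces by $L_2\cdot L_2\norme{x-y}$ and $L_3\norme{x-y}\cdot L_1$ respectively. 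The descent lemma at $\eta=1/L_{\HH}$ then gives $\HH(x^{(k+1)}) \le \HH(x^{(k)}) - \frac{1}{2L_{\HH}}\norme{\nabla\HH(x^{(k)})}^2$, and since $\norme{\nabla\HH}^2 = \norme{J^\top\xi}^2 \ge \sigma_{\min}(J)^2\norme{\xi}^2 = 2\sigma_{\min}(J)^2\HH$, a uniform bound $\sigma_{\min}(J(x))^2\ge\mu$ yields $\HH(x^{(k+1)})\le\pr{1-\mu/L_{\HH}}\HH(x^{(k)})$. Unrolling and using $\HH=\frac12\norme{\xi}^2$ gives exactly the stated geometric rate with exponent $k/2$, provided $\mu = \frac{\gamma^4}{2\gamma^2+L^2}$.

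So the entire content reduces to the linear-algebra claim $\sigma_{\min}(J)^2\ge\frac{\gamma^4}{2\gamma^2+L^2}$ at every $x$. Linearity of $g$ in $x_2$ forces $\nabla^2_{x_2}g=0$, so $J = \pr{\begin{smallmatrix} A & B \\ -B^\top & 0\end{smallmatrix}}$ with $A=\nabla^2_{x_1}g$ satisfying $\norme{A}\le L$ and $B=\nabla^2_{x_1,x_2}g$ satisfying $\sigma_{\min}(B)\ge\gamma$ (so $B$ is invertible and $\norme{B^\top v_1}\ge\gamma\norme{v_1}$ as well). For a unit vector $v=(v_1,v_2)$ I would expand $\norme{Jv}^2 = \norme{Av_1+Bv_2}^2 + \norme{B^\top v_1}^2$, bound the second term below by $\gamma^2\norme{v_1}^2$, and apply the weighted Young inequality $\norme{Av_1+Bv_2}^2 \ge (1-\beta)\norme{Bv_2}^2 + (1-\tfrac1\beta)\norme{Av_1}^2$ (for $0<\beta<1$) followed by $\norme{Av_1}\le L\norme{v_1}$ and $\norme{Bv_2}\ge\gamma\norme{v_2}$. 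Setting $c=\tfrac1\beta-1$ this produces the lower bound $(\gamma^2-cL^2)\norme{v_1}^2 + \frac{c}{1+c}\gamma^2\norme{v_2}^2$.

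The crux is tuning the weight so both coefficients clear the target. The choice $c=\gamma^2/(\gamma^2+L^2)$ makes the $\norme{v_2}^2$ coefficient exactly $\frac{\gamma^4}{2\gamma^2+L^2}$ and the $\norme{v_1}^2$ coefficient $\frac{\gamma^4}{\gamma^2+L^2}$, which is larger; since $\norme{v_1}^2+\norme{v_2}^2=1$ the minimum over unit $v$ is at least $\frac{\gamma^4}{2\gamma^2+L^2}$, giving the claim. I expect this weight-balancing to be the main obstacle: the sharpest constant this method can produce (obtained by equalizing the two coefficients over $\beta$) is the smaller root of $\mu^2-(2\gamma^2+L^2)\mu+\gamma^4=0$, which strictly exceeds the clean closed form $\frac{\gamma^4}{2\gamma^2+L^2}$, so one must either optimize $\beta$ and then relax to this tidy expression or verify directly that the above $c$ places both coefficients above it. The only other thing to watch is that the hypothesis $\sigma_{\min}(B)\ge\gamma$ is assumed at every $x$, which is exactly what makes the PL constant uniform along the whole trajectory rather than merely local.
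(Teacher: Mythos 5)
Your proposal is correct, and the high-level scaffolding is identical to the paper's: establish $L_{\HH}$-smoothness of $\HH$ exactly as in \Cref{lem:smooth}, derive the PL condition from a uniform lower bound on $\lambda_{\min}(JJ^\top)$ as in \Cref{lem:ham_pl}, and conclude via the descent lemma as in \Cref{thm:discrete_pl} and \Cref{lem:PL_for_ham}. Where you genuinely diverge is in proving the linear-algebra core $\lambda_{\min}(JJ^\top) \ge \gamma^4/(2\gamma^2+L^2)$. The paper's \Cref{lem:eig_convex} argues by contradiction: it assumes an eigenvalue $\lambda < \sigma_{\min}^2(C)$, eliminates $v_2$ from the eigenvector equations via a Schur-complement computation using the SVD of $C$, and shows the resulting matrix $AMA + CC^\top - \lambda I$ is positive definite whenever $\lambda$ lies below the smaller root of $\lambda^2 - (2\gamma^2+L^2)\lambda + \gamma^4 = 0$, finally relaxing that root to $\gamma^4/(2\gamma^2+L^2)$ via \Cref{lem:sqrt}. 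You instead bound the Rayleigh quotient $\norme{Jv}^2$ directly with a weighted Young inequality and a tuned weight $c = \gamma^2/(\gamma^2+L^2)$; this avoids the eigenvector elimination and the SVD entirely and is checked by a two-line computation. Your side remark is also accurate: the sharpest constant your method yields by equalizing the two coefficients, $\gamma^2 + \tfrac{L^2}{2} - \tfrac{L}{2}\sqrt{L^2+4\gamma^2}$, is precisely the smaller root $\delta$ appearing in the paper's proof, so the two arguments are equally sharp and share the same clean relaxation. The trade-off is that the paper's contradiction argument generalizes with only minor changes to the nonzero lower-right block needed for \Cref{lem:SC-noncvx-eig} and \Cref{thm:hgdconvergence}, whereas the direct quadratic-form bound would need the cross term $\langle Av_1 + Bv_2, -B^\top v_1 - M_2 v_2\rangle$ handled afresh in that setting; for the linear-in-$x_2$ case itself, your route is the more elementary of the two.
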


Finally, we show our main result, which requires smoothness in both players and a large, well-conditioned cross-derivative.

\begin{theorem}\label{thm:hgdconvergence}
Let \Cref{as:main} hold and let $g$ be $L$-smooth in $x_1$ and $L$-smooth in $x_2$. Let $\mu^2 = \min_{x_1,x_2} \lambda_{\min} ((\nabla_{x_2x_2}^2 g (x_1,x_2))^2)$ and $\rho^2 = \min_{x_1,x_2}\lambda_{\min} ((\nabla_{x_1x_1}^2 g(x_1,x_2))^2)$, and assume the cross derivative $\nabla^2_{x_1 x_2} g$ is full rank with all singular values lower bounded by $\gamma>0$ and upper bounded by $\Gamma$ for all $x \in \R^{d} \times \R^{d}$. Moreover, let the following ``sufficiently bilinear'' condition hold:
				\begin{align}\textstyle
				(\gamma^2 + \rho^2)(\mu^2 + \gamma^2) - 4L^2 \Gamma^2 > 0.\label{eq:sc-noncvex-cond}
				\end{align}
		Then the HGD update procedure described in \eqref{eq:hgdupdate} with step-size $\eta =1/L_{\HH}$ starting from some $x^{(0)} \in \R^{d} \times \R^{d}$ will satisfy
		\begin{align}
		\textstyle \norme{\xi(x^{(k)})} \le \pr{1 - \frac{(\gamma^2 + \rho^2)(\gamma^2+\mu^2) -4L^2 \Gamma^2}{(2\gamma^2 + \rho^2 + \mu^2)L_{\HH}}}^{k/2}\norme{\xi(x^{(0)})}.
		\end{align}
\end{theorem}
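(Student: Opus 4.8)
The plan is to show that the Hamiltonian $\HH = \frac12\|\xi\|^2$ satisfies the Polyak-\L{}ojasiewicz (PL) inequality $\|\nabla\HH(x)\|^2 \ge 2\tau\,\HH(x)$ uniformly in $x$, with
\[
\tau = \frac{(\gamma^2+\rho^2)(\gamma^2+\mu^2) - 4L^2\Gamma^2}{2\gamma^2+\rho^2+\mu^2},
\]
and then to invoke the standard linear-convergence guarantee for gradient descent on a smooth PL function \cite{karimi2016linear}. Concretely, since $g$ is $(L_1,L_2,L_3)$-Lipschitz, a direct computation from $\nabla\HH = J^\top\xi$ gives $\|\nabla^2\HH\| \le L_2^2 + L_1L_3 = L_{\HH}$, so $\HH$ is $L_{\HH}$-smooth and the descent lemma with $\eta = 1/L_{\HH}$ yields $\HH(x^{(k+1)}) \le \HH(x^{(k)}) - \frac{1}{2L_{\HH}}\|\nabla\HH(x^{(k)})\|^2$. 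Combined with the PL inequality this gives $\HH(x^{(k)}) \le (1 - \tau/L_{\HH})^k\,\HH(x^{(0)})$, and taking square roots, using $\HH = \frac12\|\xi\|^2$, produces exactly the claimed rate.

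The crux is therefore the PL inequality, which I would reduce to a purely spectral statement about the Jacobian. Since $\nabla\HH = J^\top\xi$, we have $\|\nabla\HH\|^2 = \xi^\top JJ^\top\xi$ and $2\HH = \|\xi\|^2 = \xi^\top\xi$, so the PL inequality holds at $x$ as soon as $JJ^\top \succeq \tau I$, i.e.\ $\sigma_{\min}(J)^2 = \lambda_{\min}(JJ^\top) \ge \tau$. Thus it suffices to lower-bound the smallest singular value of $J$ uniformly over all $x$, and all the structural hypotheses of the theorem feed directly into this one spectral bound.

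To obtain the bound I would write $J = \begin{pmatrix} A & B \\ -B^\top & -C\end{pmatrix}$ with $A = \nabla^2_{x_1x_1}g$ and $C = \nabla^2_{x_2x_2}g$ symmetric and $B = \nabla^2_{x_1x_2}g$, and compute the symmetric matrix
\[
JJ^\top = \begin{pmatrix} A^2 + BB^\top & -(AB+BC) \\ -(B^\top A + CB^\top) & B^\top B + C^2\end{pmatrix}.
\]
The hypotheses bound each block: the diagonal blocks satisfy $A^2 + BB^\top \succeq (\rho^2+\gamma^2)I$ and $B^\top B + C^2 \succeq (\gamma^2+\mu^2)I$ (using $A^2 \succeq \rho^2 I$, $C^2 \succeq \mu^2 I$, $\sigma_{\min}(B) \ge \gamma$, and Weyl's inequality for sums), while the off-diagonal block obeys $\|AB+BC\| \le \|A\|\|B\| + \|B\|\|C\| \le 2L\Gamma$ by $L$-smoothness and $\sigma_{\max}(B)\le\Gamma$.

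The last step, which I expect to be the main obstacle, is to convert these block bounds into a \emph{sharp} lower bound on $\lambda_{\min}(JJ^\top)$: a crude application of Weyl's inequality to the full matrix would discard the interaction between the blocks and fail to recover the stated rate. Instead I would use a scalar reduction, bounding for any unit $v = (v_1,v_2)$ the quantity $v^\top JJ^\top v \ge p\|v_1\|^2 - 2q\|v_1\|\|v_2\| + r\|v_2\|^2$ with $p = \gamma^2+\rho^2$, $r = \gamma^2+\mu^2$, and $q = 2L\Gamma$; this is minimized by the smallest eigenvalue of the $2\times2$ matrix $\begin{pmatrix} p & -q \\ -q & r\end{pmatrix}$. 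The ``sufficiently bilinear'' condition $pr - q^2 = (\gamma^2+\rho^2)(\gamma^2+\mu^2) - 4L^2\Gamma^2 > 0$ is precisely the statement that this reduced matrix is positive definite, and writing $\lambda_{\min} = \det/\lambda_{\max} \ge \det/\tr$ then gives $\lambda_{\min}(JJ^\top) \ge (pr-q^2)/(p+r) = \tau$. This closes the PL bound and hence the theorem; the conceptual insight is recognizing that condition \eqref{eq:sc-noncvex-cond} is exactly what forces the Jacobian to be uniformly nonsingular through the coupling of the cross-derivative with the diagonal curvature.
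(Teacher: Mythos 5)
Your proposal is correct and arrives at exactly the paper's constant, and its overall architecture --- establish that $\HH$ is $L_{\HH}$-smooth and satisfies the PL inequality with parameter $\tau$, then invoke the descent lemma (the paper's \Cref{thm:discrete_pl} via \Cref{lem:PL_for_ham,lem:ham_pl}) --- coincides with the paper's. Where you genuinely diverge is in the proof of the spectral bound $\lambda_{\min}(JJ^\top)\ge\tau$. The paper proves this as \Cref{lem:SC-noncvx-eig}: it takes an eigenvector $(v_1,v_2)$ of $JJ^\top$ with eigenvalue $\lambda<\sigma_{\min}^2(C)$, eliminates $v_2$ by inverting the block $B^2+C^\top C-\lambda I$ (a Schur-complement step), and derives a contradiction unless $\lambda$ exceeds the smaller root of a quadratic, which it then lower-bounds via the elementary inequality $x-\sqrt{x^2-c}>c/(2x)$ of \Cref{lem:sqrt}. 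Your route instead bounds the quadratic form $v^\top JJ^\top v$ directly block by block and reduces to the $2\times 2$ matrix $\begin{psmallmatrix} p & -q\\ -q & r\end{psmallmatrix}$ with $p=\gamma^2+\rho^2$, $r=\gamma^2+\mu^2$, $q=2L\Gamma$, whose smallest eigenvalue you bound by $\det/\textup{trace}=(pr-q^2)/(p+r)$. The two arguments produce the identical constant --- unsurprisingly, since the paper's quadratic in $\lambda$ is essentially the characteristic polynomial of your $2\times 2$ matrix --- but yours is shorter, avoids the matrix inversion and the WLOG restriction on $\lambda$, and makes transparent both why condition \cref{eq:sc-noncvex-cond} is the positive-definiteness of the reduced matrix and why the rate takes the $\det/\textup{trace}$ form. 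One cosmetic caveat: you justify $L_{\HH}$-smoothness of $\HH$ by bounding $\|\nabla^2\HH\|$, which implicitly assumes a third derivative of $g$ exists; the paper's \Cref{lem:smooth} obtains the same conclusion directly from the $(L_1,L_2,L_3)$-Lipschitz assumption without that extra regularity, so you should substitute that first-order argument.
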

As discussed above, \Cref{thm:hgdconvergence} provides the first last-iterate convergence rate for min-max problems that does not require strong convexity or linearity in either input. For example, the objective $g(x_1,x_2) = f(x_1) + 3Lx_1^\top x_2 - h(x_2)$, where $f$ and $h$ are $L$-smooth convex functions, satisfies the assumptions of \Cref{thm:hgdconvergence} and is not strongly convex or linear in either input. We discuss a simple example that is not convex-concave in \Cref{app:example}. We also show how our results can be applied to specific settings, such as the Dirac-GAN, in \Cref{app:applications}.

The ``sufficiently bilinear'' condition \cref{eq:sc-noncvex-cond} is in some sense necessary for our linear convergence rate since linear convergence is impossible in general for convex-concave settings, due to lower bounds on convex optimization \cite{agarwal2017lower,arjevani2017oracle}. We give some explanations for this condition in the following section. In simple experiments for HGD on convex-concave and nonconvex-nonconcave objectives, the convergence rate speeds up when there is a larger bilinear component, as expected from our theoretical results. We show these experiments in \Cref{app:expts}. 

\subsection{Explanation of ``sufficiently bilinear'' condition}
In this section, we explain the ``sufficiently bilinear'' condition \cref{eq:sc-noncvex-cond}. Suppose our objective is $g(x_1,x_2) = \hat{g}(x_1,x_2) + cx_1^\top x_2$ for a smooth function $\hat{g}$. Then for sufficiently large values of $c$ (i.e. $g$ has a large enough bilinear term), we see that $g$ satisfies \cref{eq:sc-noncvex-cond}. To see this, note that if we have $\gamma^4 > 4 L^2 \Gamma^2$, then condition \cref{eq:sc-noncvex-cond} holds. Let $\gamma'$ and $\Gamma'$ be lower and upper bounds on the singular values of $\nabla^2_{x_1x_2} \hat{g}$. Then it suffices to have $(\gamma'+c)^4 > 4 L^2 (\Gamma'+c)^2$, which is true for $c = 3\max\{L,\Gamma'\}$ (i.e. $c = O(L)$ suffices).

This condition is analogous to the case when we use SGDA on the objective $g(x_1,x_2) = \hat{g}(x_1,x_2) + c\norme{x_1}^2 - c\norme{x_2}^2$ for $L$-smooth convex-concave $\hat{g}$. According to \cite{liang2018interaction}, SGDA will converge at a rate of roughly $\frac{\tilde{L}^2}{c^2} \log(1/\eps)$ for $\tilde{L}$-smooth and $c$-strongly convex-strongly concave objectives.\footnote{The actual rate is $\frac{\beta}{c} \log (1/\eps)$, for some parameter $\beta$ that is at least $(L+c)^2$.} For $c = 0$, SGDA will diverge in the worst case. For $c = o(L)$, we get linear convergence, but it will be slow because $\frac{L+c}{c}$ is large (this can be thought of as a large condition number). Finally, for $c = \Omega(L)$, we get fast linear convergence, since $\frac{L+c}{c} = O(1)$.  Thus, to get fast linear convergence it suffices to make the problem ``sufficiently strongly convex-strongly concave'' (or ``sufficiently strongly monotone'').


\Cref{thm:hgdconvergence} and condition \cref{eq:sc-noncvex-cond} show that there exists another class of settings where we can achieve linear rates in the min-max setting. In our case, if we have an objective $g(x_1,x_2) = \hat{g}(x_1,x_2) + cx_1^\top x_2$ for a smooth function $\hat{g}$, we will get linear convergence if $\|\nabla^2_{x_1x_2} \hat{g}\| \le \delta L$ and $c \ge 3(1+\delta)L$, which ensures that the problem is ``sufficiently bilinear.'' Intuitively, it makes sense that the ``sufficiently bilinear'' setting allows a linear rate because the pure bilinear setting allows a linear rate.


Another way to understand condition \cref{eq:sc-noncvex-cond} is that it is a sufficient condition for the existence of a unique critical point in a general class of settings, as we show in the following lemma, which we prove in \Cref{app:unique_crit}. 
\begin{lemma}\label{lem:unique_crit}
	Let $g(x_1,x_2) = f(x_1) + cx_1^\top x_2 - h(x_2)$ where $f$ and $h$ are $L$-smooth. Moreover, assume that $\nabla^2 f(x_1)$ and $\nabla^2 h(x_2)$ each have a 0 eigenvalue for some $x_1$ and $x_2$. If \cref{eq:sc-noncvex-cond} holds, then $g$ has a unique critical point.
\end{lemma}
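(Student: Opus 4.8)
The plan is to show that, for this specific $g$, the signed gradient field $\xi$ is globally injective (which gives uniqueness) and proper (which gives existence), both forced by the bilinear coupling $c$ being large. First I would write out the gradient explicitly: since $g(x_1,x_2)=f(x_1)+cx_1^\top x_2-h(x_2)$, we have $\xi(x_1,x_2)=(\nabla f(x_1)+cx_2,\ \nabla h(x_2)-cx_1)$, so a critical point is exactly a solution of $\nabla f(x_1)=-cx_2$ and $\nabla h(x_2)=cx_1$. Next I would specialize the ``sufficiently bilinear'' condition \eqref{eq:sc-noncvex-cond} to this objective. Here $\nabla^2_{x_1x_2}g=cI$, so the singular-value bounds are $\gamma=\Gamma=|c|$; and because $\nabla^2 f$ and $\nabla^2 h$ each have a zero eigenvalue somewhere while $(\nabla^2 f)^2,(\nabla^2 h)^2\succeq 0$, the quantities $\rho^2$ and $\mu^2$ both vanish. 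Condition \eqref{eq:sc-noncvex-cond} therefore collapses to $c^2(c^2-4L^2)>0$, i.e. $|c|>2L$, and this is the only consequence of the hypotheses I will use.

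The core step is a global lower bound $\norme{\xi(x)-\xi(y)}\ge\kappa\,\norme{x-y}$ with $\kappa>0$. For $x=(x_1,x_2)$, $y=(y_1,y_2)$, set $p=x_1-y_1$ and $q=x_2-y_2$, and use the fundamental theorem of calculus to write $\nabla f(x_1)-\nabla f(y_1)=Ap$ and $\nabla h(x_2)-\nabla h(y_2)=Bq$, where $A=\int_0^1\nabla^2 f(y_1+tp)\,dt$ and $B=\int_0^1\nabla^2 h(y_2+tq)\,dt$ are symmetric with $\norme{A},\norme{B}\le L$ (from $L$-smoothness). Then $\norme{\xi(x)-\xi(y)}^2=\norme{Ap+cq}^2+\norme{Bq-cp}^2$; expanding and using symmetry of $A,B$ to rewrite the cross terms as $2c\langle(A-B)p,q\rangle$ gives
\begin{align*}
\norme{\xi(x)-\xi(y)}^2=\norme{Ap}^2+\norme{Bq}^2+c^2\pr{\norme{p}^2+\norme{q}^2}+2c\langle(A-B)p,q\rangle.
\end{align*}
Bounding $|2c\langle(A-B)p,q\rangle|\le 4|c|L\norme{p}\norme{q}\le 2|c|L(\norme{p}^2+\norme{q}^2)$ and dropping the nonnegative $\norme{Ap}^2,\norme{Bq}^2$ terms yields $\norme{\xi(x)-\xi(y)}^2\ge (c^2-2|c|L)\norme{x-y}^2=|c|(|c|-2L)\norme{x-y}^2$, so $\kappa=\sqrt{|c|(|c|-2L)}>0$ precisely because $|c|>2L$.

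Finally I would extract both conclusions from this bound. Injectivity of $\xi$ is immediate, so there is at most one critical point. For existence, the same inequality shows $\xi$ is proper (preimages of bounded sets are bounded), and the identical pointwise computation with $A=\nabla^2 f(x_1)$, $B=\nabla^2 h(x_2)$ shows the Jacobian $J$ is nonsingular everywhere; a $C^1$ proper local diffeomorphism of $\R^{2d}$ is a global diffeomorphism (Hadamard), so $\xi$ is onto and has exactly one zero. The main obstacle is the middle step: getting the cross terms to help rather than hurt. This is exactly where the symmetry of the averaged Hessians is essential — it converts the two antisymmetric coupling blocks into the single scalar $2c\langle(A-B)p,q\rangle$, which is the only reason a large $c$ can dominate the curvature terms and force $\kappa>0$; note that we deliberately discard $\norme{Ap}^2,\norme{Bq}^2$ because, for non-convex $f,h$, the averaged $A,B$ need not inherit any eigenvalue lower bound (consistent with $\rho=\mu=0$ here). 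The existence half is routine once the bound is in hand but does require a non-elementary input (invariance of domain or Hadamard's theorem); if only uniqueness is intended, injectivity alone suffices and nothing beyond the displayed inequality is needed.
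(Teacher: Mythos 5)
Your proof is correct, but it takes a genuinely different route from the paper's. The paper decouples the critical-point system: it solves $x_1 = \frac{1}{c}\nabla h(x_2)$ and substitutes to reduce everything to finding a fixed point of the single map $F(z) = -\frac{1}{c}\nabla f\bigl(\frac{1}{c}\nabla h(z)\bigr)$, which is a contraction with constant $L^2/c^2$; the Banach fixed point theorem then gives existence \emph{and} uniqueness in one stroke, under the weaker threshold $c > L$ (the hypothesis $c > 2L$ extracted from \cref{eq:sc-noncvex-cond} is then more than enough). You instead work with the joint field $\xi$ and prove the global expansiveness bound $\norme{\xi(x)-\xi(y)} \ge \sqrt{c^2 - 2|c|L}\,\norme{x-y}$, which yields injectivity immediately but forces you to invoke Hadamard's global inverse function theorem (properness plus everywhere-nonsingular Jacobian) to get existence --- a heavier and less elementary tool than Banach iteration, and one that, together with your fundamental-theorem-of-calculus representation of $A$ and $B$, leans on second-order differentiability that the contraction argument never needs. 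Two smaller remarks: your reduction of \cref{eq:sc-noncvex-cond} to $|c| > 2L$ via $\gamma = \Gamma = |c|$, $\rho = \mu = 0$ matches the paper exactly; and the symmetry of $A, B$ is not actually essential to your cross-term estimate --- bounding $2c\langle Ap, q\rangle$ and $2c\langle Bq, p\rangle$ separately by Cauchy--Schwarz and $L$-smoothness gives the same $4|c|L\norme{p}\norme{q}$ without combining them into $2c\langle (A-B)p, q\rangle$. What your approach buys is a quantitative lower bound on the vector field itself (a strong-monotonicity-like inequality for the full $\xi$), which would survive a non-constant coupling block; what the paper's approach buys is elementarity and a sharper constant.
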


\section{Proof sketches for HGD convergence rate results}\label{sec:proof_hgd}
In this section, we go over the key components of the proofs for our convergence rates from \Cref{sec:hgd_rates}. Recall that the intuition behind HGD was that critical points (where $\xi(x) = 0$) are global minima of $\HH = \frac12 \norme{\xi}^2$. On the other hand, there is no guarantee that $\HH$ is a convex potential function, and a priori, one would not assume gradient descent on this potential would find a critical point. Nonetheless, we are able to show that in a variety of settings, $\HH$ satisfies the \emph{PL condition}, which allows HGD to have linear convergence. Proving this requires proving properties about the singular values of $J\equiv \nabla \xi$.

\subsection{The Polyak-\L{}ojasiewicz condition for the Hamiltonian}\label{sec:hpl}
We begin by recalling the definition of the PL condition.
\begin{definition}[Polyak-\L{}ojasiewicz (PL) condition {\cite{polyak1963,lojasiewicz1963}}]
A function $f \colon \R^d \to \R$ satisfies the PL condition with parameter $\alpha > 0$ if for all $x \in \R^d$, $\frac{1}{2}\norme{\nabla f(x)}^2 \ge \alpha (f(x) - \min_{x^\ast \in \R^d} f(x^\ast))$.
\end{definition}
The PL condition is well-known to be the weakest condition necessary to obtain linear convergence rate for gradient methods; see for example~\cite{karimi2016linear}.
We will show that $\HH$ satisfies the PL condition, which allows us to use the following classic theorem.
\begin{theorem}[Linear rate under PL \cite{polyak1963,lojasiewicz1963}]\label{thm:discrete_pl}  
	Let $f:\R^d \ra \R$ be $L$-smooth and let $x^* \in \arg\min_{x \in \R^d} f(x)$. Suppose $f$ satisfies the PL condition with parameter $\alpha$. Then if we run gradient descent from $x^{(0)} \in \R^d$ with step-size $\frac{1}{L}$, we have: $f(x^{(k)}) - f(x^*) \le (1-\frac{\alpha}{L})^k (f(x^{(0)}) - f(x^*))$.
\end{theorem}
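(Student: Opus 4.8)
The plan is to establish a single-step contraction in the function value and then unroll it geometrically. First I would invoke the standard descent lemma implied by $L$-smoothness: for all $x, y \in \R^d$,
\[
f(y) \le f(x) + \lr{\nabla f(x), y - x} + \frac{L}{2}\norme{y-x}^2,
\]
which follows from integrating $\nabla f$ along the segment from $x$ to $y$ and using the Lipschitz gradient bound. I would then specialize this to consecutive gradient descent iterates, taking $x = x^{(k)}$ and $y = x^{(k+1)} = x^{(k)} - \rc{L}\nabla f(x^{(k)})$. The step-size $\rc{L}$ is chosen precisely so the two terms carrying $\norme{\nabla f(x^{(k)})}^2$ combine: the inner product contributes $-\rc{L}\norme{\nabla f(x^{(k)})}^2$ and the quadratic contributes $+\frac{1}{2L}\norme{\nabla f(x^{(k)})}^2$, leaving
\[
f(x^{(k+1)}) \le f(x^{(k)}) - \frac{1}{2L}\norme{\nabla f(x^{(k)})}^2.
\]

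Next I would apply the PL condition, $\frac{1}{2}\norme{\nabla f(x^{(k)})}^2 \ge \alpha\pr{f(x^{(k)}) - f(x^*)}$, to lower-bound the gradient term, and subtract $f(x^*)$ from both sides. This yields the one-step contraction
\[
f(x^{(k+1)}) - f(x^*) \le \pr{1 - \frac{\alpha}{L}}\pr{f(x^{(k)}) - f(x^*)}.
\]
Finally I would iterate this inequality over $k$ steps to obtain $f(x^{(k)}) - f(x^*) \le \pr{1-\frac{\alpha}{L}}^k\pr{f(x^{(0)}) - f(x^*)}$, which is exactly the claimed bound.

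Since each step is a direct substitution, there is no genuine obstacle here; this is the classical Polyak argument. The only point meriting a sentence of care is that the contraction factor $1 - \alpha/L$ is a bona fide number in $[0,1)$, i.e.\ that $\alpha \le L$. This holds automatically: the descent lemma applied at the gradient step already shows $f(x) - f(x^*) \ge \frac{1}{2L}\norme{\nabla f(x)}^2$ for any smooth $f$ with a minimizer, and comparing this with the PL inequality $\frac12\norme{\nabla f(x)}^2 \ge \alpha\pr{f(x)-f(x^*)}$ forces $\alpha \le L$. Given how standard this is, I would cite \cite{polyak1963,lojasiewicz1963,karimi2016linear} rather than belabor the computation.
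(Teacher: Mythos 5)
Your proof is correct and follows exactly the same route as the paper's: the descent lemma from $L$-smoothness with step-size $\rc{L}$, the PL inequality to get the one-step contraction, and recursion. Your added observation that $\alpha \le L$ (so the contraction factor lies in $[0,1)$) is a sensible extra sentence the paper omits, but the argument is otherwise identical.
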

For completeness, we provide the proof of \Cref{thm:discrete_pl} in~\Cref{App:PL}.

All of our results use \Cref{as:main}, so we are guaranteed that $g$ has a critical point. This implies that the global minimum of $\HH$ is 0, which allows us to prove the following key lemma:
\begin{lemma}\label{lem:ham_pl}
	Assume we have a twice differentiable $g(x_1,x_2)$ with associated $\xi, \HH, J$. Let $\alpha > 0$. If $J J^\top \succeq \alpha I$ for every $x$, then $\HH$ satisfies the PL condition with parameter $\alpha$.
\end{lemma}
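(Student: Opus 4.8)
The plan is to verify the PL inequality $\frac{1}{2}\|\nabla\HH(x)\|^2 \ge \alpha(\HH(x) - \min\HH)$ directly, by rewriting $\nabla\HH$ in terms of $\xi$ and $J$ and then invoking the quadratic-form hypothesis $JJ^\top \succeq \alpha I$ pointwise. The key realization is that the PL condition for $\HH$ is, after computing its gradient, nothing more than a single evaluation of a positive-semidefinite quadratic form at the vector $\xi$.

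First I would pin down the minimum value of $\HH$. Since $\HH = \frac{1}{2}\|\xi\|^2 \ge 0$ and \Cref{as:main} guarantees a critical point $x^*$ with $\xi(x^*) = 0$, we have $\min\HH = \HH(x^*) = 0$. This is precisely where the existence-of-a-critical-point hypothesis enters, and it reduces the target inequality to showing $\frac{1}{2}\|\nabla\HH(x)\|^2 \ge \alpha\,\HH(x)$ for every $x$.

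Next I would compute the gradient. Applying the chain rule to $\HH = \frac{1}{2}\xi^\top\xi$ gives $\nabla\HH = J^\top\xi$ (equivalently the row-vector form $\xi^\top J$ used elsewhere in the paper), so that
\[
\tfrac12\|\nabla\HH(x)\|^2 = \tfrac12\,\xi^\top JJ^\top\xi.
\]
I would then apply the hypothesis $JJ^\top \succeq \alpha I$ with the particular vector $\xi = \xi(x)$, which yields $\xi^\top JJ^\top\xi \ge \alpha\|\xi\|^2 = 2\alpha\,\HH(x)$. Combined with the display, this gives $\frac{1}{2}\|\nabla\HH(x)\|^2 \ge \alpha\,\HH(x) = \alpha(\HH(x) - \min\HH)$, which is exactly the PL condition with parameter $\alpha$.

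Since everything collapses to evaluating one positive-semidefinite quadratic form, I do not expect a genuine obstacle here. The only points requiring care are keeping the matrix-vector conventions consistent so that the relevant matrix is $JJ^\top$ rather than $J^\top J$, and recalling that the reduction to $\frac{1}{2}\|\nabla\HH\|^2 \ge \alpha\,\HH$ silently relies on $\min\HH = 0$, hence on the assumed existence of a critical point.
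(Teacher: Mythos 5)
Your proposal is correct and follows essentially the same route as the paper: compute $\nabla\HH = J^\top\xi$, observe $\frac12\|\nabla\HH\|^2 = \frac12\,\xi^\top JJ^\top\xi \ge \alpha\HH$, and use the existence of a critical point to conclude $\min\HH = 0$. Your explicit remark that the reduction silently relies on $\min\HH = 0$ is a nice clarification of a step the paper handles in one closing sentence.
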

\begin{proof} Consider the squared norm of the gradient of the Hamiltonian:
	\begin{align*}\textstyle
	\frac12 \|\nabla \HH\|^2 &= \frac12 \|J^\top \xi\|^2 = \frac12 \langle \xi , (J J^\top)  \xi \rangle \ge \frac{\alpha}{2} \norme{\xi}^2 = \alpha \HH.
	\end{align*}	
The proof is finished by noting that $\HH(x) = 0$ when $x$ is a critical point.
\end{proof}
To use \Cref{thm:discrete_pl}, we will also need to show that $\HH$ is smooth, which holds when $g$ is $(L_1,L_2,L_3)$-Lipschitz. The proof of \Cref{lem:smooth} is in \Cref{sec:hgd_app}.
\begin{lemma}\label{lem:smooth}
	Consider any $g(x_1, x_2)$ which is $(L_1,L_2,L_3)$-Lipschitz for constants $L_1,L_2,L_3 > 0$. Then the Hamiltonian $\HH(x)$ is $(L_1 L_3 + L_2^2)$-smooth.
\end{lemma}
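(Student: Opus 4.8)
The plan is to compute $\nabla\HH$ in closed form and then bound the Lipschitz constant of this gradient map by a standard add-and-subtract estimate, reading off each factor directly from the $(L_1,L_2,L_3)$-Lipschitz hypothesis. Since $\HH = \frac12\norme{\xi}^2 = \frac12 \xi^\top\xi$, the chain rule gives $\nabla\HH(x) = (\nabla\xi(x))^\top \xi(x) = J(x)^\top \xi(x)$, which is exactly the formula $\nabla\HH = \xi^\top J$ recorded in the notation section. So proving $(L_1L_3+L_2^2)$-smoothness amounts to showing that the map $x \mapsto J(x)^\top\xi(x)$ is $(L_1L_3+L_2^2)$-Lipschitz.

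To that end I would fix arbitrary $x,y$ and insert a cross term:
\begin{align*}
\nabla\HH(x) - \nabla\HH(y) = J(x)^\top\bigl(\xi(x)-\xi(y)\bigr) + \bigl(J(x)-J(y)\bigr)^\top\xi(y).
\end{align*}
Applying the triangle inequality and submultiplicativity of the operator norm (together with $\norme{A^\top}=\norme{A}$) then gives
\begin{align*}
\norme{\nabla\HH(x)-\nabla\HH(y)} \le \norme{J(x)}\,\norme{\xi(x)-\xi(y)} + \norme{J(x)-J(y)}\,\norme{\xi(y)}.
\end{align*}
Three of the four factors are immediate from the hypothesis: $\norme{J(x)}=\norme{\nabla\xi(x)}\le L_2$, next $\norme{J(x)-J(y)}=\norme{\nabla\xi(x)-\nabla\xi(y)}\le L_3\norme{x-y}$, and finally $\norme{\xi(y)}\le L_1$. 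The remaining factor requires the one auxiliary fact that $\xi$ is itself $L_2$-Lipschitz, which I would obtain by writing $\xi(x)-\xi(y)=\int_0^1 \nabla\xi(y+t(x-y))(x-y)\,dt$ and bounding the integrand by $\norme{\nabla\xi}\le L_2$. Substituting the four bounds yields $\norme{\nabla\HH(x)-\nabla\HH(y)}\le (L_2^2 + L_1L_3)\norme{x-y}$, which is the claim.

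The argument is essentially routine, so I do not expect a serious obstacle; the only points demanding care are matters of bookkeeping. First, one must get the gradient formula right — the product rule for $\frac12\norme{\xi}^2$ produces the transpose $J^\top$, and since $J$ is not symmetric here this asymmetry must be tracked, though it is harmless because the operator norm is transpose-invariant. Second, the telescoping split must pair $J(x)$ with the difference $\xi(x)-\xi(y)$ and the difference $J(x)-J(y)$ with $\xi(y)$ so that each factor matches exactly one of the three Lipschitz constants; the alternative pairing works equally well and gives the same constant. Because the resulting constant is precisely $L_1L_3+L_2^2$, the proof also confirms the definition of $L_{\HH}$ used throughout \Cref{as:main}.
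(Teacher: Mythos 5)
Your proof is correct and follows essentially the same route as the paper: the same add-and-subtract decomposition of $\nabla\HH(x)-\nabla\HH(y)$ into two terms bounded by $L_1L_3$ and $L_2^2$ respectively (the paper uses the symmetric pairing, which you correctly note is equivalent). Your explicit justification that $\xi$ is $L_2$-Lipschitz via the integral representation is a detail the paper leaves implicit, but otherwise the arguments coincide.
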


To use \Cref{lem:ham_pl}, we will need control over the eigenvalues of $JJ^\top$, which we achieve with the following linear algebra lemmas. We provide their proofs in \Cref{sec:hgd_app}.
\begin{lemma}\label{lem:eig_lower}
	Let $\textstyle H = \begin{psmallmatrix}
	M_1 & B \\ -B^\top & -M_2
	\end{psmallmatrix}$ and let $\epsilon \ge 0$. If $M_1 \succ \epsilon I$ and $M_2 \prec -\epsilon I$, then for all eigenvalues $\lambda$ of $HH^\top$, we have $\lambda > \epsilon^2$.
\end{lemma}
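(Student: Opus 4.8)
The plan is to reduce the eigenvalue statement about $HH^\top$ to a lower bound on the smallest singular value of $H$. Since $H$ is square, the eigenvalues of $HH^\top$ are exactly the squared singular values of $H$, so every eigenvalue $\lambda$ satisfies $\lambda \ge \sigma_{\min}(H)^2$. Thus it suffices to prove $\sigma_{\min}(H) > \epsilon$, and using the variational characterization $\sigma_{\min}(H) = \min_{v \ne 0} \norme{Hv}/\norme{v}$, this amounts to showing $\norme{Hv} > \epsilon \norme{v}$ for every nonzero $v$.

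The key step is to bound $\norme{Hv}$ from below via the quadratic form $\langle v, Hv\rangle$ rather than attacking $\norme{Hv}^2$ directly (which would produce awkward cross terms). Writing $v = (v_1, v_2)$ conformally with the block structure, I would expand
\[
\langle v, Hv\rangle = \langle v_1, M_1 v_1\rangle + \langle v_1, B v_2\rangle - \langle v_2, B^\top v_1\rangle - \langle v_2, M_2 v_2\rangle,
\]
and observe that the two off-diagonal contributions cancel, since $\langle v_1, B v_2\rangle = \langle v_2, B^\top v_1\rangle$ for the real inner product. Equivalently, the symmetric part $\frac12(H + H^\top)$ is the block-diagonal matrix $\operatorname{diag}(M_1, -M_2)$, and only this symmetric part contributes to the quadratic form. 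Hence $\langle v, Hv\rangle = \langle v_1, M_1 v_1\rangle - \langle v_2, M_2 v_2\rangle$, and the hypotheses $M_1 \succ \epsilon I$ and $M_2 \prec -\epsilon I$ give $\langle v, Hv\rangle > \epsilon(\norme{v_1}^2 + \norme{v_2}^2) = \epsilon \norme{v}^2$ for all $v \ne 0$. Applying Cauchy--Schwarz, $\epsilon \norme{v}^2 < \langle v, Hv\rangle \le \norme{v}\,\norme{Hv}$, so $\norme{Hv} > \epsilon \norme{v}$, which is exactly what I needed. Taking the minimum over unit vectors (attained, hence the strictness survives) yields $\sigma_{\min}(H) > \epsilon$ and therefore $\lambda > \epsilon^2$ for every eigenvalue of $HH^\top$.

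There is no serious obstacle here; this is a short linear-algebra argument, and the only genuine insight is the cross-term cancellation that collapses $H$ to its block-diagonal symmetric part. A couple of minor points are worth noting in the write-up: the argument never uses symmetry of $M_1$ or $M_2$, since the definiteness hypotheses are used only through the quadratic form, and the cancellation holds regardless; and the chain of strict inequalities handles the degenerate case $\epsilon = 0$ correctly, where $\langle v, Hv\rangle > 0$ for all $v \ne 0$ forces $H$ to be invertible and hence $HH^\top \succ 0$. The one place to be careful is to track strictness through the Cauchy--Schwarz step and the passage to the minimum over the unit sphere, which is legitimate because that minimum is attained.
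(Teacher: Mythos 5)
Your proof is correct, but it takes a genuinely different route from the paper's. The paper first observes that $HH^\top = Z^2$ for the symmetric matrix $Z = \begin{psmallmatrix} M_1 & -B \\ -B^\top & M_2 \end{psmallmatrix}$ (an identity that silently uses $M_1 = M_1^\top$ and $M_2 = M_2^\top$, valid in the application since these are Hessian blocks), and then argues by contradiction that $Z$ has no eigenvalue of magnitude at most $\epsilon$: assuming $Zv=\delta v$ with $|\delta|\le\epsilon$, it eliminates $v_1$ via the shifted blocks $\hat M_1 = M_1 - \delta I \succ 0$, $\hat M_2 = M_2 - \delta I \prec 0$, and derives that $-\hat M_2^{-1} B^\top \hat M_1^{-1} B$ has eigenvalue $-1$, contradicting its similarity to a PSD matrix. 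You instead lower-bound $\sigma_{\min}(H)$ directly through the quadratic form: the skew part of $H$ contributes nothing, so $\langle v, Hv\rangle$ collapses to the block-diagonal symmetric part $\operatorname{diag}(M_1,-M_2)\succ \epsilon I$, and Cauchy--Schwarz gives $\norme{Hv} > \epsilon\norme{v}$; passing to the attained minimum over the unit sphere preserves strictness. Your argument is shorter, avoids the invertibility and similarity machinery entirely, and as you note does not require $M_1, M_2$ to be symmetric (interpreting the definiteness hypotheses through the quadratic form). It is essentially the standard strong-monotonicity bound on the field of values of $J$, whereas the paper's Schur-complement-style argument is the one that generalizes to its harder companion results (Lemmas~\ref{lem:eig_convex} and~\ref{lem:SC-noncvx-eig}), where the relevant diagonal blocks are only PSD and the quadratic-form shortcut no longer suffices.
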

\begin{lemma}\label{lem:eig_convex}
	Let $H = \begin{psmallmatrix}
	A & C \\ -C^\top & 0
	\end{psmallmatrix}$, where C is square and full rank. Then if $\lambda$ is an eigenvalue of $HH^\top$, then we must have $\lambda \ge \frac{\sigma_{\min}^4(C)}{2\sigma^2_{\min}(C) + \norme{A}^2}$.
\end{lemma}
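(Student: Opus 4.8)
The plan is to work with the variational characterization $\lambda_{\min}(HH^\top) = \min_{\|u\|=1}\|H^\top u\|^2$, which holds because $HH^\top$ is symmetric positive semidefinite and $u^\top HH^\top u = \|H^\top u\|^2$; this avoids diagonalizing the $2\times 2$ block matrix entirely. First I would partition a unit vector as $u = (u_1, u_2)$ conformally with the block structure of $H$ and compute $H^\top u = (A^\top u_1 - C u_2,\ C^\top u_1)$, so that
\[
\|H^\top u\|^2 = \|A^\top u_1 - C u_2\|^2 + \|C^\top u_1\|^2.
\]
The second summand is immediate: since $C$ is full rank, $\|C^\top u_1\|^2 \ge \sigma_{\min}^2(C)\,\|u_1\|^2$. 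The work is to lower bound the first summand without discarding the fact that $C$ is well conditioned.

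For that summand I would expand $\|A^\top u_1 - Cu_2\|^2 = \|A^\top u_1\|^2 - 2\langle A^\top u_1, Cu_2\rangle + \|Cu_2\|^2$ and control the cross term by Young's inequality, $2\langle A^\top u_1, Cu_2\rangle \le \tfrac1t\|A^\top u_1\|^2 + t\|Cu_2\|^2$ for a parameter $t \in (0,1)$, giving
\[
\|A^\top u_1 - Cu_2\|^2 \ge \pr{1 - \tfrac1t}\|A^\top u_1\|^2 + (1-t)\|Cu_2\|^2.
\]
Since $1 - \tfrac1t < 0$ I bound $\|A^\top u_1\|^2 \le \|A\|^2\|u_1\|^2$ (using $\|A^\top\| = \|A\|$, so symmetry of $A$ is not even needed), and since $1-t > 0$ I bound $\|Cu_2\|^2 \ge \sigma_{\min}^2(C)\|u_2\|^2$. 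Adding back the second summand, $\|H^\top u\|^2$ is bounded below by a weighted sum of $\|u_1\|^2$ and $\|u_2\|^2$.

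Finally I would choose $t$ to balance the two coefficients. Writing $s = \sigma_{\min}^2(C)$ and $m = \|A\|^2$, the coefficient of $\|u_2\|^2$ is $(1-t)s$ and that of $\|u_1\|^2$ is $s + (1-\tfrac1t)m$; taking $t = \frac{s+m}{2s+m}$, which lies in $(0,1)$ precisely because $s>0$, makes the $\|u_2\|^2$ coefficient equal to $\frac{s^2}{2s+m}$ while the $\|u_1\|^2$ coefficient becomes $\frac{s^2}{s+m} \ge \frac{s^2}{2s+m}$. Hence $\|H^\top u\|^2 \ge \frac{s^2}{2s+m}\,(\|u_1\|^2 + \|u_2\|^2) = \frac{s^2}{2s+m}$, which is exactly the claimed $\frac{\sigma_{\min}^4(C)}{2\sigma_{\min}^2(C) + \|A\|^2}$.

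The main obstacle is arranging the Young's-inequality split to produce the correct denominator: the negative coefficient $1-\tfrac1t$ on $\|A^\top u_1\|^2$ is what forces the careful, optimized choice of $t$, whereas a naive triangle-inequality bound on $\|A^\top u_1 - Cu_2\|$ would yield a strictly worse constant that degenerates when $\|A\|$ is large. Everything else is routine block arithmetic, and the full-rank hypothesis on $C$ enters only to guarantee $s > 0$, which is what makes the bound positive.
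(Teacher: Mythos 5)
Your proof is correct, and it takes a genuinely different route from the paper's. The paper argues by contradiction on an eigenvector $v=(v_1,v_2)$ of $HH^\top$: it eliminates $v_2$ via the second block equation, simplifies $C(C^\top C-\lambda I)^{-1}C^\top$ through the SVD of $C$, reduces the problem to showing a matrix $AMA+CC^\top-\lambda I$ is positive definite, and finally solves a quadratic in $\lambda$ and invokes \Cref{lem:sqrt} to extract the stated constant. You instead bound the Rayleigh quotient $\min_{\norme{u}=1}\norme{H^\top u}^2$ directly, splitting the cross term $2\langle A^\top u_1, Cu_2\rangle$ with a Young's inequality whose parameter $t=\frac{s+m}{2s+m}$ is tuned so that the two resulting coefficients both dominate $\frac{s^2}{2s+m}$; I verified the algebra and it lands on exactly the same constant $\frac{\sigma_{\min}^4(C)}{2\sigma_{\min}^2(C)+\norme{A}^2}$. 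Your argument is more elementary (no SVD, no matrix inversion, no root-finding, no auxiliary square-root lemma) and slightly more general: the paper's expansion of $HH^\top$ implicitly uses $A=A^\top$ (writing $AA^\top=A^2$ and $-C^\top A^\top=-C^\top A$), which is harmless in the intended application to $A=\nabla^2_{x_1x_1}g$ but is not required by your version. The paper's elimination approach, on the other hand, generalizes more transparently to the case of a nonzero lower-right block (\Cref{lem:SC-noncvx-eig}), though your Young's-inequality split would likely adapt there as well with a second tuning parameter.
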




\subsection{Proof sketches for Theorems \ref{thm:hgdconvergence_sc}, \ref{thm:hgdconvergence_linear}, and \ref{thm:hgdconvergence}} \label{sec:hgdconv}

We now proceed to sketch the proofs of our main theorems using the techniques we have described. The following lemma shows it suffices to prove the PL condition for $\HH$ for the various settings of our theorems:

\begin{lemma}\label{lem:PL_for_ham}
	Given $g:\R^{d}\times \R^{d} \ra \R$, suppose $\HH$ satisfies the PL condition with parameter $\alpha$ and is $L_{\HH}$-smooth. Then if we update some $x^{(0)} \in \R^{d} \times \R^{d}$ using \cref{eq:hgdupdate} with step-size $\eta = 1/L_\HH$, then we have the following:
	\begin{align*}\textstyle
	\norme{\xi(x^{(k)})} \le \pr{1-\frac{\alpha}{L_{\HH}}}^{k/2} \norme{\xi(x^{(0)})}.
	\end{align*}
\end{lemma}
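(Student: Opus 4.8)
The plan is to invoke the classic linear-rate-under-PL result (\Cref{thm:discrete_pl}) to control the decrease of $\HH$, and then translate the bound on $\HH$ back to a bound on $\norme{\xi}$ using the explicit form $\HH = \frac12\norme{\xi}^2$. The hypotheses are exactly tailored to this: we are told $\HH$ is $L_\HH$-smooth and satisfies the PL condition with parameter $\alpha$, and the HGD update \cref{eq:hgdupdate} is literally gradient descent on $\HH$ with step-size $\eta = 1/L_\HH$. So the first step is to recognize that the sequence $x^{(k)}$ produced by \cref{eq:hgdupdate} is precisely the gradient-descent sequence on the objective $f = \HH$ to which \Cref{thm:discrete_pl} applies.

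First I would record that, under \Cref{as:main}, a critical point of $g$ exists, so $\xi$ vanishes somewhere and hence $\min_x \HH(x) = 0$, with the minimum attained. This identifies $\HH(x^\ast) = 0$ for $x^\ast \in \arg\min \HH$, which is what lets us drop the $-\HH(x^\ast)$ terms in the PL theorem's conclusion. Then, applying \Cref{thm:discrete_pl} with $f = \HH$, $L = L_\HH$, and PL-parameter $\alpha$, I obtain
\begin{align*}
\HH(x^{(k)}) - \HH(x^\ast) \le \pr{1 - \frac{\alpha}{L_\HH}}^{k}\pr{\HH(x^{(0)}) - \HH(x^\ast)}.
\end{align*}

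The final step is purely algebraic substitution: since $\HH(x) = \frac12\norme{\xi(x)}^2$ and $\HH(x^\ast) = 0$, the displayed inequality becomes
\begin{align*}
\frac12 \norme{\xi(x^{(k)})}^2 \le \pr{1 - \frac{\alpha}{L_\HH}}^{k}\,\frac12\norme{\xi(x^{(0)})}^2,
\end{align*}
and taking square roots of both sides (the bracketed factor lies in $[0,1)$ since $0 < \alpha \le L_\HH$, so it is nonnegative) yields the claimed rate $\norme{\xi(x^{(k)})} \le \pr{1 - \frac{\alpha}{L_\HH}}^{k/2}\norme{\xi(x^{(0)})}$. I do not anticipate a genuine obstacle here, as the lemma is essentially a bookkeeping wrapper around \Cref{thm:discrete_pl}; the only point requiring a word of care is justifying that $\min\HH = 0$ is attained so that \Cref{thm:discrete_pl} is applicable and the $\HH(x^\ast)$ terms cancel, which is exactly where \Cref{as:main}'s guarantee of an existing critical point is used.
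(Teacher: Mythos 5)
Your proposal is correct and takes essentially the same route as the paper's proof: both apply \Cref{thm:discrete_pl} to $f = \HH$ with $L = L_\HH$ and then substitute $\HH = \frac12\norme{\xi}^2$ and take square roots. Your explicit remark that \Cref{as:main} guarantees $\min \HH = 0$ is attained is a detail the paper states just before the lemma rather than inside the proof, but it is the same argument.
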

\begin{proof}
	Since $\HH$ satisfies the PL condition with parameter $\alpha$ and $\HH$ is $L_{\HH}$-smooth, we know by \Cref{thm:discrete_pl} that gradient descent on $\HH$ with step-size $1/L_{\HH}$ converges at a rate of $\HH(x^{(k)}) \le (1-\frac{\alpha}{L_{\HH}})^k \HH(x^{(0)})$. Substituting in for $\HH$ gives the lemma.
\end{proof}

It remains to show that $\HH$ satisfies the PL condition in the settings of \Cref{thm:hgdconvergence_sc,thm:hgdconvergence_linear,thm:hgdconvergence}. First, we show the result for the strongly convex-strongly concave setting of \Cref{thm:hgdconvergence_sc}.

\begin{lemma}[PL for the strongly convex-strongly concave setting]\label{lem:hgd_sc-sc}
	Let $g$ be $c$-strongly convex in $x_1$ and $c$-strongly concave in $x_2$. Then $\HH$ satisfies the PL condition with parameter $\alpha = c^2$.
\end{lemma}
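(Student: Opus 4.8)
The plan is to reduce the claim to the two linear-algebra facts already established, namely \Cref{lem:ham_pl} and \Cref{lem:eig_lower}. Recall that \Cref{lem:ham_pl} tells us it suffices to verify $J J^\top \succeq c^2 I$ at every point $x$, and \Cref{lem:eig_lower} provides exactly such a lower bound on the eigenvalues of $H H^\top$ for matrices $H$ having the block structure of $J$. So the whole argument comes down to checking that $J$ matches the template of \Cref{lem:eig_lower} and that the strong convexity/concavity hypotheses supply the required definiteness of the diagonal blocks.

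First I would write out the Jacobian
\[
J = \begin{pmatrix} \nabla^2_{x_1 x_1} g & \nabla^2_{x_1 x_2} g \\ -\nabla^2_{x_2 x_1} g & -\nabla^2_{x_2 x_2} g \end{pmatrix}
\]
and match it to the template $H = \begin{psmallmatrix} M_1 & B \\ -B^\top & -M_2 \end{psmallmatrix}$ from \Cref{lem:eig_lower}. Using symmetry of the second partials, $\nabla^2_{x_1 x_2} g = (\nabla^2_{x_2 x_1} g)^\top$, so the choice $M_1 = \nabla^2_{x_1 x_1} g$, $B = \nabla^2_{x_1 x_2} g$, and $M_2 = \nabla^2_{x_2 x_2} g$ reproduces $J$ exactly. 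Then I would invoke the hypotheses: $c$-strong convexity in $x_1$ gives $M_1 = \nabla^2_{x_1 x_1} g \succeq cI$, and $c$-strong concavity in $x_2$ gives $M_2 = \nabla^2_{x_2 x_2} g \preceq -cI$, both holding at every $x$.

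These are non-strict bounds, whereas \Cref{lem:eig_lower} is phrased with the strict conditions $M_1 \succ \epsilon I$ and $M_2 \prec -\epsilon I$, and this is the one place requiring a small amount of care. I would handle it by applying the lemma for each $\epsilon < c$: since $M_1 \succeq cI \succ \epsilon I$ and $M_2 \preceq -cI \prec -\epsilon I$, the lemma yields that every eigenvalue $\lambda$ of $J J^\top$ satisfies $\lambda > \epsilon^2$; letting $\epsilon \uparrow c$ gives $\lambda \ge c^2$, hence $J J^\top \succeq c^2 I$ at every $x$. Applying \Cref{lem:ham_pl} with this bound then shows directly that $\HH$ satisfies the PL condition with parameter $\alpha = c^2$, which is the claim.

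I do not expect a serious obstacle here; the only subtlety is the strict-versus-non-strict limiting step just described, and the need to note the symmetry $\nabla^2_{x_1 x_2} g = (\nabla^2_{x_2 x_1} g)^\top$ so that the off-diagonal blocks of $J$ genuinely have the $B$, $-B^\top$ form demanded by \Cref{lem:eig_lower}. Beyond that, the proof is a direct substitution of the strong convexity/concavity bounds into the two lemmas.
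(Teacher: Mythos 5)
Your proposal is correct and follows essentially the same route as the paper: instantiate \Cref{lem:eig_lower} with $H=J$, use strong convexity/concavity to bound the diagonal blocks, conclude $JJ^\top \succeq c^2 I$, and invoke \Cref{lem:ham_pl}. Your limiting argument $\epsilon \uparrow c$ to bridge the strict inequalities of \Cref{lem:eig_lower} with the non-strict bounds from strong convexity is a small point of extra care that the paper's proof glosses over, but it does not change the substance of the argument.
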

\begin{proof}
	We apply Lemma~\ref{lem:eig_lower} with $H = J$. Since $g$ is $c$-strongly-convex in $x_1$ and $c$-strongly concave in $x_2$ we have $M_1 = \nabla_{x_1x_1}^2 g \succ c I$ and $M_2 = -\nabla_{x_2x_2}^2 g \succ c I$. Then the magnitude of the eigenvalues of $J$ is at least $c$. Thus, $JJ^\top \succeq c^2 I$, so by Lemma~\ref{lem:ham_pl}, $\HH$ satisfies the PL condition with parameter $c^2$. 
\end{proof}

Next, we show that $\HH$ satisfies the PL condition for the nonconvex-linear setting of \Cref{thm:hgdconvergence_linear}. We prove this lemma in \Cref{app:lem:hgd_sc-linear} by using \Cref{lem:eig_convex}.
\begin{lemma}[PL for the smooth nonconvex-linear setting]\label{lem:hgd-noncvx-linear}
	Let $g$ be $L$-smooth in $x_1$ and linear in $x_2$. Moreover, for all $x \in \R^{d}\times \R^{d}$, let $\nabla^2_{x_1 x_2} g(x_1,x_2)$ be full rank and square with $\sigma_{\min}(\nabla^2_{x_1 x_2} g(x_1,x_2)) \ge \gamma$. Then $\HH$ satisfies the PL condition with parameter $\alpha = \frac{\gamma^4}{2\gamma^2 + L^2}$.
\end{lemma}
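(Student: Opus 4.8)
The plan is to recognize that in this setting the Jacobian $J = \nabla \xi$ has exactly the block structure handled by \Cref{lem:eig_convex}, apply that lemma to lower bound the eigenvalues of $JJ^\top$, and then invoke \Cref{lem:ham_pl} to conclude the PL property.

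First I would write out $J$ explicitly. Since $g$ is linear in $x_2$, the block $\nabla^2_{x_2 x_2} g$ vanishes identically. Writing $A := \nabla^2_{x_1 x_1} g$ and $C := \nabla^2_{x_1 x_2} g$, and using symmetry of mixed partials so that $\nabla^2_{x_2 x_1} g = C^\top$, the Jacobian becomes
\[
J = \begin{pmatrix} A & C \\ -C^\top & 0 \end{pmatrix},
\]
which is precisely the matrix $H$ appearing in \Cref{lem:eig_convex}. The hypotheses of that lemma are met, since $C = \nabla^2_{x_1 x_2} g$ is square and full rank by assumption.

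Next I would apply \Cref{lem:eig_convex} with $H = J$, obtaining that every eigenvalue $\lambda$ of $JJ^\top$ satisfies $\lambda \ge \frac{\sigma_{\min}^4(C)}{2\sigma_{\min}^2(C) + \norme{A}^2}$ at each point $x$. To convert this pointwise bound into the stated uniform constant, I would use $\sigma_{\min}(C) \ge \gamma$ (the full-rank assumption) together with $\norme{A} = \norme{\nabla^2_{x_1 x_1} g} \le L$ (which is equivalent to $L$-smoothness of $g$ in $x_1$), and the elementary fact that $s \mapsto \frac{s^4}{2s^2 + a^2}$ is nondecreasing in $s \ge 0$ and nonincreasing in $a \ge 0$. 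Substituting the worst case $s = \gamma$, $a = L$ then gives $\lambda \ge \frac{\gamma^4}{2\gamma^2 + L^2}$ at every $x$, i.e. $JJ^\top \succeq \frac{\gamma^4}{2\gamma^2 + L^2}\, I$ uniformly in $x$.

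Finally I would feed this into \Cref{lem:ham_pl} with $\alpha = \frac{\gamma^4}{2\gamma^2 + L^2}$ to conclude that $\HH$ satisfies the PL condition with this parameter. The only genuinely nontrivial content is \Cref{lem:eig_convex} itself, which is available here; given it, the remaining work is the structural identification of $J$ and the monotonicity substitution, so I do not anticipate a real obstacle. The one point I would verify carefully is the monotonicity claim that justifies replacing $\sigma_{\min}(C)$ and $\norme{A}$ by $\gamma$ and $L$: this follows from a one-line derivative computation after the substitution $u = s^2$, since $\frac{d}{du}\frac{u^2}{2u+a^2} = \frac{2u(u+a^2)}{(2u+a^2)^2} \ge 0$ and the expression is manifestly decreasing in $a^2$.
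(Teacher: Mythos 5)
Your proposal is correct and follows essentially the same route as the paper's proof: identify $J$ with the block matrix of \Cref{lem:eig_convex} (using linearity in $x_2$ to kill the $\nabla^2_{x_2x_2}g$ block), apply that lemma pointwise, substitute the uniform bounds $\sigma_{\min}(C)\ge\gamma$ and $\norme{A}\le L$, and conclude via \Cref{lem:ham_pl}. Your explicit monotonicity check for $s \mapsto \frac{s^4}{2s^2+a^2}$ is a detail the paper leaves implicit, but it is the right justification for the substitution step.
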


Finally, we prove that $\HH$ satisfies the PL condition in the nonconvex-nonconvex setting of \Cref{thm:hgdconvergence}. The proof for Lemma~\ref{lem:hgd_sc-noncvx} is in \Cref{app:lem:hgd_sc-noncvx}, and it uses Lemma~\ref{lem:SC-noncvx-eig}, which is similar to Lemma~\ref{lem:eig_convex}.

\begin{lemma}[PL for the smooth nonconvex-nonconvex setting]\label{lem:hgd_sc-noncvx}
	Let $g$ be $L$-smooth in $x_1$ and $L$-smooth in $x_2$. Also, let $\nabla^2_{x_1 x_2} g$ be full rank and let all of its singular values be lower bounded by $\gamma$ and upper bounded by $\Gamma$ for all $x\in \R^{d}\times \R^{d}$. Let $\rho^2 = \min_{x_1,x_2} \lambda_{\min}((\nabla_{x_1x_1}^2 g(x_1,x_2))^2)$ and $\mu^2 =\min_{x_1,x_2} \lambda_{\min}((\nabla_{x_2x_2}^2 g(x_1,x_2))^2)$. Assume the following condition holds:
	\begin{align*}\textstyle
	(\gamma^2 + \rho^2)(\gamma^2 + \mu^2) - 4L^2 \Gamma^2 > 0 .
	\end{align*}
	Then $\HH$ satisfies the PL condition with parameter $\alpha = \frac{(\gamma^2 + \rho^2)(\gamma^2+\mu^2) -4L^2 \Gamma^2}{2\gamma^2 + \rho^2 + \mu^2}$.
\end{lemma}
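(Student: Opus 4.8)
The plan is to reduce the entire statement to a single eigenvalue bound via \Cref{lem:ham_pl}: it suffices to show that $JJ^\top \succeq \alpha I$ at every point $x$, with $\alpha = \frac{(\gamma^2 + \rho^2)(\gamma^2+\mu^2) -4L^2 \Gamma^2}{2\gamma^2 + \rho^2 + \mu^2}$, after which the PL condition (and hence, together with \Cref{lem:smooth} and \Cref{lem:PL_for_ham}, the convergence rate) follows. Writing $A = \nabla^2_{x_1 x_1} g$, $B = \nabla^2_{x_1 x_2} g$, $D = \nabla^2_{x_2 x_2} g$ at a fixed $x$ (with $A, D$ symmetric and $\nabla^2_{x_2 x_1} g = B^\top$), we have $J = \begin{psmallmatrix} A & B \\ -B^\top & -D \end{psmallmatrix}$, so a direct block multiplication gives
\[
JJ^\top = \begin{pmatrix} A^2 + BB^\top & -(AB + BD) \\ -(B^\top A + DB^\top) & B^\top B + D^2 \end{pmatrix}.
\]
Everything then reduces to lower-bounding $\lambda_{\min}$ of this symmetric $2\times 2$ block matrix, the nonconvex-nonconvex analog of the eigenvalue estimate in \Cref{lem:eig_convex}.

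First I would collect the block bounds. Since $g$ is $L$-smooth in each argument, $\norme{A} \le L$ and $\norme{D} \le L$; the singular-value hypotheses on $B$ give $\gamma^2 I \preceq BB^\top, B^\top B \preceq \Gamma^2 I$; and the definitions of $\rho^2, \mu^2$ give $A^2 \succeq \rho^2 I$ and $D^2 \succeq \mu^2 I$. Hence the diagonal blocks satisfy $A^2 + BB^\top \succeq (\rho^2 + \gamma^2) I$ and $B^\top B + D^2 \succeq (\mu^2 + \gamma^2) I$, while the off-diagonal block satisfies $\norme{AB + BD} \le \norme{A}\norme{B} + \norme{B}\norme{D} \le 2L\Gamma$ by the triangle inequality. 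It is precisely this crude triangle-inequality bound on the cross term that produces the $4L^2\Gamma^2$ appearing in $\alpha$.

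The key step is to reduce the block quadratic form to a scalar $2\times 2$ problem. For $v = (v_1, v_2)$, setting $a = \norme{v_1}$ and $b = \norme{v_2}$ and bounding the cross term by Cauchy--Schwarz, I obtain $v^\top (JJ^\top) v \ge p a^2 - 2q ab + r b^2$ with $p = \rho^2 + \gamma^2$, $r = \mu^2 + \gamma^2$, and $q = 2L\Gamma$. This scalar form is governed by the matrix $\begin{psmallmatrix} p & -q \\ -q & r \end{psmallmatrix}$, whose eigenvalues satisfy $\lambda_- \lambda_+ = pr - q^2$ and $\lambda_- + \lambda_+ = p + r$. The ``sufficiently bilinear'' hypothesis is exactly $pr - q^2 = (\gamma^2 + \rho^2)(\gamma^2+\mu^2) - 4L^2\Gamma^2 > 0$, which forces both eigenvalues to be strictly positive; then $\lambda_+ \le p + r$ yields $\lambda_- = \frac{pr - q^2}{\lambda_+} \ge \frac{pr - q^2}{p + r} = \alpha$, the claimed parameter.

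The main subtlety I expect is the sign bookkeeping in the scalar reduction: Cauchy--Schwarz controls the cross term only as $v_1^\top(\text{off-diag})v_2 \ge -qab$ with $a, b \ge 0$, so a priori one minimizes $p a^2 - 2q ab + r b^2$ over the nonnegative quadrant rather than the full circle. I would close this gap by observing that the true minimizer of this form over the unit circle, being an eigenvector of a matrix with negative off-diagonal entry, can be taken with coordinates of equal sign --- if they had opposite signs, replacing them by their absolute values would strictly decrease the form and contradict minimality --- so restricting to $a, b \ge 0$ leaves the minimum unchanged and the bound $\lambda_-$ is genuinely attained. With this in hand, $v^\top(JJ^\top)v \ge \alpha\norme{v}^2$ for all $v$, i.e.\ $JJ^\top \succeq \alpha I$, and \Cref{lem:ham_pl} completes the proof.
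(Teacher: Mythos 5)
Your proof is correct, and it reaches exactly the paper's constant $\alpha = \frac{pr-q^2}{p+r}$ with $p=\gamma^2+\rho^2$, $r=\gamma^2+\mu^2$, $q=2L\Gamma$, but by a genuinely different route. The paper proves an intermediate linear-algebra result (\Cref{lem:SC-noncvx-eig}, the nonconvex--nonconvex analogue of \Cref{lem:eig_convex}): it takes an eigenvector $(v_1,v_2)$ of $JJ^\top$ with eigenvalue $\lambda$, assumes WLOG $\lambda<\sigma_{\min}^2(C)$ so that the lower-right block $M=B^2+C^\top C-\lambda I$ is invertible, eliminates $v_2=M^{-1}(C^\top A+BC^\top)v_1$, and derives a contradiction whenever $\lambda$ lies below the smaller root of the quadratic $\lambda^2-b\lambda+c$, finally invoking \Cref{lem:sqrt} to lower-bound that root by $c/b$. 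You instead bound the quadratic form $v^\top JJ^\top v$ directly: lower-bound the diagonal blocks by $pI$ and $rI$, upper-bound the operator norm of the off-diagonal block $AB+BD$ by $q$ via the triangle inequality, and reduce to $\lambda_{\min}\begin{psmallmatrix}p&-q\\-q&r\end{psmallmatrix}\ge\frac{pr-q^2}{p+r}$ using $\lambda_-\lambda_+=pr-q^2$ and $\lambda_+\le p+r$. Your handling of the quadrant issue (the minimizer of a form with nonpositive off-diagonal entry can be taken with coordinates of equal sign) is the right way to close that gap, and your trace--determinant bound plays exactly the role of \Cref{lem:sqrt}. What your approach buys is a shorter, purely variational argument that avoids the WLOG invertibility step and the contradiction scaffolding; what the paper's approach buys is a standalone eigenvalue lemma stated for general block matrices, reusable verbatim for the nonconvex--linear case.
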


Combining \Cref{lem:hgd_sc-sc,lem:hgd-noncvx-linear,lem:hgd_sc-noncvx} with \Cref{lem:PL_for_ham} yields \Cref{thm:hgdconvergence_sc,thm:hgdconvergence_linear,thm:hgdconvergence}.

\section{Extensions of HGD results}\label{sec:co}

\paragraph{Stochastic HGD}
Our results above also imply rates for stochastic HGD, where the gradient $\nabla \HH$ in \cref{eq:hgdupdate}, is replaced by a stochastic estimator $v$ of $\nabla \HH$ such that $\E{v} = \nabla \HH$. Since we show that $\HH$ satisfies the PL condition with parameter $\alpha$ in different settings, we can use Theorem 4 in \cite{karimi2016linear} to show that stochastic HGD converges at a $O(1/\sqrt{k})$ rate in the settings of \Cref{thm:hgdconvergence_sc,thm:hgdconvergence_linear,thm:hgdconvergence}, including the ``sufficiently bilinear'' setting. We prove \Cref{thm:sgd} in \Cref{app:sgd}.
\begin{theorem}\label{thm:sgd}
	Let \Cref{as:main} hold and suppose $\HH$ satisfies the PL condition with parameter $\alpha$. Suppose we use the update $x^{(k+1)} = x^{(k)} - \eta_k v(x^{(k)})$, where $v$ is a stochastic estimate of $\nabla \HH$ such that $\textup{E}[v] = \nabla \HH$ and $\textup{E}[\|v(x^{(k)})\|^2] \le C^2$ for all $x^{(k)}$. Then if we use $\eta_k = \frac{2k+1}{2\alpha (k+1)^2}$, we have the following convergence rate: $\textup{E}[\|\xi(x^{(k)})\|] \le \sqrt{\frac{L_\HH C^2}{k\alpha^2}}$.
\end{theorem}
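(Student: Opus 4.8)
The plan is to reduce the statement to a direct invocation of the known convergence rate for stochastic gradient descent under the PL condition (Theorem 4 of \cite{karimi2016linear}), applied to the potential $\HH$. First I would verify that $\HH$ satisfies all the hypotheses of that external theorem. It is $L_\HH$-smooth: \Cref{as:main} guarantees $g$ is $(L_1,L_2,L_3)$-Lipschitz, so \Cref{lem:smooth} gives $(L_1 L_3 + L_2^2)$-smoothness, i.e. $L_\HH$-smoothness. It satisfies the PL condition with parameter $\alpha$ by assumption. And the update $x^{(k+1)} = x^{(k)} - \eta_k v(x^{(k)})$ uses an unbiased estimator with $\E{v} = \nabla\HH$ and uniformly bounded second moment $\E{\|v(x^{(k)})\|^2} \le C^2$, while the prescribed schedule $\eta_k = \frac{2k+1}{2\alpha(k+1)^2}$ is exactly the step-size for which that theorem furnishes its $O(1/k)$ rate.

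A crucial preliminary observation is that the minimum value of $\HH$ is zero. \Cref{as:main} guarantees a critical point $x^\ast$ exists, so $\xi(x^\ast) = 0$ and hence $\HH(x^\ast) = \tfrac12\|\xi(x^\ast)\|^2 = 0$; since $\HH \ge 0$ everywhere, we conclude $\min_x \HH(x) = 0$. This lets me drop the $\HH^\ast$ term from the convergence bound, so that the rate will be stated purely in terms of $\E{\HH(x^{(k)})}$.

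Invoking Theorem 4 of \cite{karimi2016linear} with these ingredients (identifying their smoothness constant with $L_\HH$ and their PL parameter with $\alpha$) then yields $\E{\HH(x^{(k)})} \le \frac{L_\HH C^2}{2k\alpha^2}$. Substituting $\HH = \tfrac12\|\xi\|^2$ gives $\E{\|\xi(x^{(k)})\|^2} \le \frac{L_\HH C^2}{k\alpha^2}$. Finally I would apply Jensen's inequality together with the concavity of $t \mapsto \sqrt{t}$ to pass from the second moment to the first, obtaining $\E{\|\xi(x^{(k)})\|} \le \sqrt{\E{\|\xi(x^{(k)})\|^2}} \le \sqrt{\frac{L_\HH C^2}{k\alpha^2}}$, which is precisely the claimed bound.

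Since the heavy lifting is done by the cited theorem, there is no single deep obstacle here; the proof is essentially a matter of bookkeeping. The main things to get right are checking that each hypothesis of the external theorem is matched exactly—in particular the unbiasedness and bounded-variance conditions on $v$ and the specific decaying step-size—and correctly identifying $\HH^\ast = 0$ via the existence of a critical point. The only genuinely non-mechanical step is the concluding Jensen's inequality, which is needed because the external theorem naturally controls $\E{\HH} = \tfrac12\E{\|\xi\|^2}$, whereas the desired conclusion is phrased in terms of $\E{\|\xi\|}$.
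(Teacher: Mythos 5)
Your proposal is correct and follows essentially the same route as the paper's own proof: invoke Theorem~4 of \cite{karimi2016linear} applied to $\HH$ (using $\HH^\ast = 0$ from the existence of a critical point), then pass from $\E{\|\xi\|^2}$ to $\E{\|\xi\|}$ via Jensen's inequality. Your explicit verification of $L_\HH$-smoothness via \Cref{lem:smooth} is a detail the paper leaves implicit, but the argument is otherwise identical.
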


\paragraph{Consensus Optimization}
The Consensus Optimization (CO) algorithm of \cite{mescheder2017numerics} is as follows:
\begin{align}\label{eq:co_update}
x^{(k+1)} = x^{(k)} - \eta(\xi(x^{(k)}) + \gamma \nabla \HH(x^{(k)}))
\end{align}
where $\gamma > 0$. This is essentially a weighted combination of SGDA and HGD. \cite{mescheder2017numerics} remark that while HGD has poor performance on nonconvex problems in practice, CO can effectively train GANs in a variety of settings, including on CIFAR-10 and celebA. While they frame CO as SGDA with a small modification, they actually set $\gamma = 10$ for several of their experiments, which suggests that one can also view CO as a modified form of HGD.

Using this perspective, we prove \Cref{thm:co}, which implies that we get linear convergence of CO in the same settings as \Cref{thm:hgdconvergence_sc,thm:hgdconvergence_linear,thm:hgdconvergence} provided that $\gamma$ is sufficiently large (i.e. the HGD update is large compared to the SGDA update). The key technical component is showing that HGD still performs well even with a certain kind of small arbitrary perturbation. Previously, \cite{liang2018interaction} proved that CO achieves linear convergence in the bilinear setting, so our result greatly expands the settings where CO has provable non-asymptotic convergence. We prove \Cref{thm:co} in \Cref{app:co}.
\begin{theorem}\label{thm:co}
	Let \Cref{as:main} hold. Let $g$ be $L_g$ smooth and suppose $\HH$ satisfies the PL condition with parameter $\alpha$. Then if we update some $x^{(0)} \in \R^d \times \R^d$ using the CO update \cref{eq:co_update} with step-size $\eta = \frac{\alpha}{4L_\HH L_g}$ and $\gamma = \frac{4L_g}{\alpha}$, we get the following convergence:
	\begin{align}
	\textstyle \norme{\xi(x^{(k)})} \le \pr{1 - \frac{\alpha}{4L_{\HH}}}^{k}\norme{\xi(x^{(0)})}.
	\end{align}
\end{theorem}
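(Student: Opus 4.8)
The plan is to view the CO update \cref{eq:co_update} as a single step of HGD plus a controllable perturbation, and then show that this perturbation is too small to spoil the linear decrease of $\HH$. First I would factor out the coefficient of $\nabla\HH$ in \cref{eq:co_update}. With the prescribed choices $\eta = \frac{\alpha}{4L_\HH L_g}$ and $\gamma = \frac{4L_g}{\alpha}$, a direct calculation gives $\eta\gamma = 1/L_\HH$, so the CO update can be rewritten as
\begin{align*}
x^{(k+1)} = x^{(k)} - \frac{1}{L_\HH}\pr{\nabla\HH(x^{(k)}) + e^{(k)}}, \qquad e^{(k)} := \frac{\alpha}{4L_g}\,\xi(x^{(k)}).
\end{align*}
In other words, CO is exactly gradient descent on $\HH$ with the HGD step-size $1/L_\HH$ (as in \Cref{lem:PL_for_ham}), corrupted by an additive error $e^{(k)}$ that points along the SGDA direction $\xi$.

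Next I would prove a descent estimate for this perturbed gradient descent on $\HH$. Since $\HH$ is $L_\HH$-smooth (\Cref{lem:smooth}), expanding the smoothness inequality along the displayed update makes the first-order and quadratic cross terms involving $\langle \nabla\HH, e^{(k)}\rangle$ cancel exactly, leaving
\begin{align*}
\HH(x^{(k+1)}) \le \HH(x^{(k)}) - \frac{1}{2L_\HH}\norme{\nabla\HH(x^{(k)})}^2 + \frac{1}{2L_\HH}\norme{e^{(k)}}^2.
\end{align*}
This is the ``HGD tolerates a small arbitrary perturbation'' step: if one can show $\norme{e^{(k)}} \le \frac14\norme{\nabla\HH(x^{(k)})}$, then the error term is dominated by the descent term and, invoking the PL condition $\norme{\nabla\HH}^2 \ge 2\alpha\HH$, we obtain a per-step contraction whose factor is below $\pr{1-\frac{\alpha}{4L_\HH}}^2$. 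Unrolling over $k$ steps and using $\HH = \frac12\norme{\xi}^2$ then converts the bound on $\HH(x^{(k)})$ into the stated bound on $\norme{\xi(x^{(k)})}$.

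The crux, and the step I expect to be the main obstacle, is verifying the perturbation bound $\norme{e^{(k)}} \le \frac14\norme{\nabla\HH(x^{(k)})}$. The point is that the admissible perturbation size is governed by the ratio $\sqrt\alpha/L_g$. Since $\nabla\HH = J^\top\xi$ and $J=\nabla\xi$ equals $\nabla^2 g$ up to an orthogonal sign flip of its lower block-row, we have $\norme{J} = \norme{\nabla^2 g} \le L_g$ and hence $\norme{\nabla\HH} \le L_g\norme{\xi}$. Feeding this into the PL inequality $\frac12\norme{\nabla\HH}^2 \ge \alpha\HH = \frac{\alpha}{2}\norme{\xi}^2$ forces $\alpha \le L_g^2$, i.e. $\sqrt\alpha \le L_g$, and the same PL inequality also yields $\norme{\xi} \le \frac{1}{\sqrt\alpha}\norme{\nabla\HH}$. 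Substituting into the definition of $e^{(k)}$ gives
\begin{align*}
\norme{e^{(k)}} = \frac{\alpha}{4L_g}\norme{\xi(x^{(k)})} \le \frac{\sqrt\alpha}{4L_g}\norme{\nabla\HH(x^{(k)})} \le \frac14\norme{\nabla\HH(x^{(k)})},
\end{align*}
which is exactly the bound required above. This also explains the parameter choices: taking $\gamma$ of order $L_g/\alpha$ shrinks the relative weight of the SGDA term enough that the HGD descent dominates, and the specific constants are tuned so that the leftover error is absorbed into the $\pr{1-\frac{\alpha}{4L_\HH}}$ rate.
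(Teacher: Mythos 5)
Your argument is correct, but it controls the perturbation differently from the paper. The paper (via \Cref{thm:co_perturb}) makes the same first move—rewriting CO as an HGD step of size $\eta\gamma = 1/L_\HH$ plus an additive term of norm proportional to $\norme{\xi(x^{(k)})}$—but then works in iterate space rather than potential space: it inserts the intermediate point $x^{(k+1/2)} = x^{(k)} - \frac{1}{L_\HH}\nabla\HH(x^{(k)})$, applies the one-step PL contraction to get $\norme{\xi(x^{(k+1/2)})} \le \pr{1-\frac{\alpha}{2L_\HH}}\norme{\xi(x^{(k)})}$, and then uses the $L_g$-Lipschitzness of $\xi$ with the triangle inequality to absorb the SGDA displacement, so that the choice $\eta_v = \frac{\alpha}{4L_\HH L_g}$ adds back exactly $\frac{\alpha}{4L_\HH}\norme{\xi(x^{(k)})}$ and the factor $1-\frac{\alpha}{4L_\HH}$ falls out directly. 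You instead run an inexact-gradient-descent analysis on $\HH$ itself, using the exact cancellation of the cross terms in the smoothness expansion and the two-sided comparison $\sqrt{\alpha}\,\norme{\xi} \le \norme{\nabla\HH} \le L_g\norme{\xi}$ (the lower bound from PL, the upper from $\norme{J} \le L_g$), which also yields the fact $\alpha \le L_g^2$ needed to close the estimate $\norme{e^{(k)}} \le \frac14\norme{\nabla\HH(x^{(k)})}$; I verified that your resulting per-step factor $1 - \frac{15\alpha}{16 L_\HH}$ is indeed at most $\pr{1-\frac{\alpha}{4L_\HH}}^2$, so the stated rate follows after taking square roots. The paper's route is shorter and, because it only uses the norm of the perturbation, extends verbatim to arbitrary perturbation directions (which is how \Cref{thm:co_perturb} is stated); your route stays entirely within the standard ``gradient descent with relatively bounded error under PL'' framework and makes more transparent why the admissible weight of the SGDA term is governed by the ratio $\sqrt{\alpha}/L_g$.
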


We also show that CO converges in practice on some simple examples in \Cref{app:expts}.

\bibliographystyle{alpha}   
\bibliography{LastIterateGamesBib}

\newcommand{\etalchar}[1]{$^{#1}$}
\begin{thebibliography}{GPAM{\etalchar{+}}14}

\bibitem[ADLH19]{adolphs2018local}
Leonard Adolphs, Hadi Daneshmand, Aurelien Lucchi, and Thomas Hofmann.
\newblock Local saddle point optimization: A curvature exploitation approach.
\newblock In {\em Artificial Intelligence and Statistics (AISTATS)}, 2019.

\bibitem[AH18]{agarwal2017lower}
Naman Agarwal and Elad Hazan.
\newblock Lower bounds for higher-order convex optimization.
\newblock In {\em Conference on Learning Theory (COLT)}, 2018.

\bibitem[ALLW18]{abernethy2018faster}
Jacob Abernethy, Kevin~A Lai, Kfir~Y Levy, and Jun-Kun Wang.
\newblock Faster rates for convex-concave games.
\newblock {\em Conference on Learning Theory (COLT)}, 2018.

\bibitem[ASS17]{arjevani2017oracle}
Yossi Arjevani, Ohad Shamir, and Ron Shiff.
\newblock Oracle complexity of second-order methods for smooth convex
  optimization.
\newblock {\em Mathematical Programming}, pages 1--34, 2017.

\bibitem[AZH16]{allen2016variance}
Zeyuan Allen-Zhu and Elad Hazan.
\newblock Variance reduction for faster non-convex optimization.
\newblock In {\em International Conference on Machine Learning (ICML)}, pages
  699--707, 2016.

\bibitem[BRM{\etalchar{+}}18]{balduzzi2018mechanics}
David Balduzzi, Sebastien Racaniere, James Martens, Jakob Foerster, Karl Tuyls,
  and Thore Graepel.
\newblock The mechanics of n-player differentiable games.
\newblock In {\em International Conference on Machine Learning (ICML)}, 2018.

\bibitem[Bro51]{brown1951iterative}
George~W Brown.
\newblock Iterative solution of games by fictitious play.
\newblock {\em Activity analysis of production and allocation}, 13(1):374--376,
  1951.

\bibitem[CBL06]{cesa2006prediction}
Nicolo Cesa-Bianchi and G{\'a}bor Lugosi.
\newblock {\em Prediction, learning, and games}.
\newblock Cambridge university press, 2006.

\bibitem[CHDS17]{carmon2017convex}
Yair Carmon, Oliver Hinder, John~C Duchi, and Aaron Sidford.
\newblock ``{C}onvex until proven guilty": Dimension-free acceleration of
  gradient descent on non-convex functions.
\newblock In {\em International Conference on Machine Learning (ICML)}, 2017.

\bibitem[DH19]{du2019linear}
Simon~S Du and Wei Hu.
\newblock Linear convergence of the primal-dual gradient method for
  convex-concave saddle point problems without strong convexity.
\newblock In {\em Artificial Intelligence and Statistics (AISTATS)}, 2019.

\bibitem[DISZ18]{daskalakis2018training}
Constantinos Daskalakis, Andrew Ilyas, Vasilis Syrgkanis, and Haoyang Zeng.
\newblock Training gans with optimism.
\newblock In {\em International Conference on Learning Representations (ICLR)},
  2018.

\bibitem[DP18]{daskalakis2018limit}
Constantinos Daskalakis and Ioannis Panageas.
\newblock The limit points of (optimistic) gradient descent in min-max
  optimization.
\newblock In {\em Advances in Neural Information Processing Systems (NeurIPS)},
  pages 9255--9265, 2018.

\bibitem[FS99]{freund1999adaptive}
Yoav Freund and Robert~E. Schapire.
\newblock {Adaptive Game Playing Using Multiplicative Weights}.
\newblock {\em Games and Economic Behavior}, 29(1-2):79--103, October 1999.

\bibitem[GBVLJ19]{gidel2018variational}
Gauthier Gidel, Hugo Berard, Pascal Vincent, and Simon Lacoste-Julien.
\newblock A variational inequality perspective on generative adversarial nets.
\newblock {\em International Conference on Learning Representations (ICLR)},
  2019.

\bibitem[GL16]{ghadimi2016accelerated}
Saeed Ghadimi and Guanghui Lan.
\newblock Accelerated gradient methods for nonconvex nonlinear and stochastic
  programming.
\newblock {\em Mathematical Programming}, 156(1-2):59--99, 2016.

\bibitem[GPAM{\etalchar{+}}14]{goodfellow2014generative}
Ian Goodfellow, Jean Pouget-Abadie, Mehdi Mirza, Bing Xu, David Warde-Farley,
  Sherjil Ozair, Aaron Courville, and Yoshua Bengio.
\newblock Generative adversarial nets.
\newblock In {\em Advances in Neural Information Processing Systems (NeurIPS)},
  pages 2672--2680, 2014.

\bibitem[Han57]{hannan1957approximation}
James Hannan.
\newblock Approximation to bayes risk in repeated play.
\newblock {\em Contributions to the Theory of Games}, 3:97--139, 1957.

\bibitem[Haz16]{hazan2016introduction}
Elad Hazan.
\newblock Introduction to online convex optimization.
\newblock {\em Foundations and Trends{\textregistered} in Optimization},
  2(3-4):157--325, 2016.

\bibitem[KALL18]{karras2017progressive}
Tero Karras, Timo Aila, Samuli Laine, and Jaakko Lehtinen.
\newblock Progressive growing of gans for improved quality, stability, and
  variation.
\newblock {\em International Conference on Learning Representations (ICLR)},
  2018.

\bibitem[KNS16]{karimi2016linear}
Hamed Karimi, Julie Nutini, and Mark Schmidt.
\newblock Linear convergence of gradient and proximal-gradient methods under
  the {P}olyak-{\l}ojasiewicz condition.
\newblock In {\em Joint European Conference on Machine Learning and Knowledge
  Discovery in Databases}, pages 795--811. Springer, 2016.

\bibitem[Kro19]{kroer2019first}
Christian Kroer.
\newblock First-order methods with increasing iterate averaging for solving
  saddle-point problems.
\newblock {\em arXiv preprint arXiv:1903.10646}, 2019.

\bibitem[LFB{\etalchar{+}}19]{letcher2018stable}
Alistair Letcher, Jakob Foerster, David Balduzzi, Tim Rockt{\"a}schel, and
  Shimon Whiteson.
\newblock Stable opponent shaping in differentiable games.
\newblock In {\em International Conference on Learning Representations}, 2019.

\bibitem[Loj63]{lojasiewicz1963}
Lojasiewicz.
\newblock A topological property of real analytic subsets (in french).
\newblock {\em Coll. du CNRS, Les \'equations aux d\'eriv\'ees partielles},
  page 87–89, 1963.

\bibitem[LS19]{liang2018interaction}
Tengyuan Liang and James Stokes.
\newblock Interaction matters: A note on non-asymptotic local convergence of
  generative adversarial networks.
\newblock {\em Artificial Intelligence and Statistics (AISTATS)}, 2019.

\bibitem[MGN18]{mescheder2018training}
Lars Mescheder, Andreas Geiger, and Sebastian Nowozin.
\newblock Which training methods for gans do actually converge?
\newblock In {\em International Conference on Machine Learning (ICML)}, pages
  3478--3487, 2018.

\bibitem[MJS19]{mazumdar2019finding}
Eric~V Mazumdar, Michael~I Jordan, and S~Shankar Sastry.
\newblock On finding local nash equilibria (and only local nash equilibria) in
  zero-sum games.
\newblock {\em arXiv preprint arXiv:1901.00838}, 2019.

\bibitem[MLZ{\etalchar{+}}19]{mertikopoulos2018optimistic}
Panayotis Mertikopoulos, Bruno Lecouat, Houssam Zenati, Chuan-Sheng Foo, Vijay
  Chandrasekhar, and Georgios Piliouras.
\newblock Optimistic mirror descent in saddle-point problems: Going the
  extra(-gradient) mile.
\newblock In {\em International Conference on Learning Representations (ICLR)},
  2019.

\bibitem[MNG17]{mescheder2017numerics}
Lars Mescheder, Sebastian Nowozin, and Andreas Geiger.
\newblock The numerics of {GAN}s.
\newblock In {\em Advances in Neural Information Processing Systems (NeurIPS)},
  pages 1825--1835, 2017.

\bibitem[MOP19]{mokhtari2019unified}
Aryan Mokhtari, Asuman Ozdaglar, and Sarath Pattathil.
\newblock A unified analysis of extra-gradient and optimistic gradient methods
  for saddle point problems: Proximal point approach.
\newblock {\em arXiv preprint arXiv:1901.08511}, 2019.

\bibitem[MPP18]{mertikopoulos2018cycles}
Panayotis Mertikopoulos, Christos Papadimitriou, and Georgios Piliouras.
\newblock Cycles in adversarial regularized learning.
\newblock In {\em Proceedings of the Twenty-Ninth Annual ACM-SIAM Symposium on
  Discrete Algorithms (SODA)}, pages 2703--2717. SIAM, 2018.

\bibitem[Neu28]{neumann1928theorie}
J~v Neumann.
\newblock Zur theorie der gesellschaftsspiele.
\newblock {\em Mathematische annalen}, 100(1):295--320, 1928.

\bibitem[Pea94]{pearlmutter1994fast}
Barak~A Pearlmutter.
\newblock Fast exact multiplication by the hessian.
\newblock {\em Neural computation}, 6(1):147--160, 1994.

\bibitem[Pol63]{polyak1963}
B.~T. Polyak.
\newblock Gradient methods for minimizing functionals (in russian).
\newblock {\em Zh. Vychisl. Mat. Mat. Fiz.}, page 643–653, 1963.

\bibitem[Rob51]{robinson1951iterative}
Julia Robinson.
\newblock An iterative method of solving a game.
\newblock {\em Annals of mathematics}, pages 296--301, 1951.

\bibitem[Roc76]{rockafellar1976monotone}
R~Tyrrell Rockafellar.
\newblock Monotone operators and the proximal point algorithm.
\newblock {\em SIAM journal on control and optimization}, 14(5):877--898, 1976.

\bibitem[Tse95]{tseng1995linear}
Paul Tseng.
\newblock On linear convergence of iterative methods for the variational
  inequality problem.
\newblock {\em Journal of Computational and Applied Mathematics},
  60(1-2):237--252, 1995.

\bibitem[YFW{\etalchar{+}}19]{yazici2018unusual}
Yasin Yaz{\i}c{\i}, Chuan-Sheng Foo, Stefan Winkler, Kim-Hui Yap, Georgios
  Piliouras, and Vijay Chandrasekhar.
\newblock The unusual effectiveness of averaging in gan training.
\newblock {\em International Conference on Learning Representations (ICLR)},
  2019.

\end{thebibliography}

\newpage
\appendix

\section{General nonconvex min-max optimization} \label{app:noncvx}
In standard nonconvex optimization, a common goal is to find second-order local minima, which are approximate critical points where $\nabla^2 f$ is approximately positive definite. Likewise, a common goal in nonconvex min-max optimization is to find approximate critical points where an analogous second-order condition holds, namely that $\nabla_{x_1x_1}^2 g(x)$ is approximately positive definite and $\nabla_{x_2x_2}^2 g(x)$ is approximately negative definite. Critical points where this second-order condition holds are called \emph{local min-maxes}. When \Cref{as:no_spurious} holds, all critical points are \emph{global} min-maxes, but in more general settings, we may encounter critical points that do not satisfy these conditions. Critical points may be local min-mins or max-mins or indefinite points. A number of recent papers have proposed dynamics for nonconvex min-max optimization, showing local stability or local asymptotic convergence results \cite{mescheder2017numerics, daskalakis2018limit, balduzzi2018mechanics, letcher2018stable, mazumdar2019finding}. The key guarantee that these papers generally give is that their algorithms will be stable at local min-maxes and unstable at some set of undesirable critical points (such as local max-mins). This essentially amounts to a guarantee that in the convex-concave setting, their algorithms will converge asymptotically and in the strictly concave-strictly convex setting (i.e. where there is only an undesirable \emph{max-min}), their algorithms will diverge asymptotically. This type of local stability is essentially the best one can ask for in the general nonconvex setting, and we show how to give similar guarantees for our algorithm in Section \ref{app:noncvx_extensions}.

\subsection{Nonconvex extensions for HGD}\label{app:noncvx_extensions}

While the naive version of HGD will try to converge to all critical points, we can modify HGD slightly to achieve second-order stability guarantees as in various related work such as \cite{balduzzi2018mechanics,letcher2018stable}. In particular, we consider modifying HGD so that there is some scalar $\alpha$ in front of the $\nabla \HH$ term as follows:
\begin{align}
x^{(k+1)} = x^{(k)} - \eta \alpha \nabla \HH(x^{(k)})
\end{align}

We now present two ways to choose $\alpha$. Our first method is inspired by the Simplectic Gradient Adjustment algorithm of \cite{balduzzi2018mechanics}, which is as follows:
\begin{align}
x^{(k+1)} = x^{(k)} - \eta(\xi(x^{(k)}) - \lambda A^\top \xi(x^{k}))
\end{align}
where $A$ is the antisymmetric part of $J$ and $\lambda = \sgn{\lr{\xi, J} \lr{A^\top \xi, J}}$. \cite{balduzzi2018mechanics} show that $\lambda$ is positive when in a strictly convex-strictly concave region and negative in a strictly concave-strictly convex region. Thus, if we choose $\alpha = \lambda = \sgn{\lr{\xi, J} \lr{A^\top \xi, J}}$, we can ensure that the modified HGD will exhibit local stability around strict min-maxes and local instability around strict max-mins. This follows simply because we will do gradient \emph{descent} on $\HH$ in the first case and gradient \emph{ascent} on $\HH$ in the second case.

Another way to choose $\alpha$ involves using an approximate eigenvalue computation on $\nabla^2_{x_1x_1} g$ and $\nabla^2_{x_2x_2} g$ to detect whether $\nabla^2_{x_1x_1} g$ is positive semidefinite and $\nabla^2_{x_2x_2} g$ is negative semidefinite (which would mean we are in a convex-concave region). We set $\alpha=1$ if we are in a convex-concave region and $-1$ otherwise, which will guarantee local stability around min-maxes and local instability around other critical points. This approximate eigenvector computation can be done using a logarithmic number of Hessian-vector products.

\section{Background on non-uniform average iterates}\label{app:avg}
A number of recent works have focused on the performance of a non-uniform average of an algorithm's iterates. Iterate averaging can lend stability to an algorithm or improve performance if the algorithm cycles around the solution. On the other hand, uniform averages can suffer from worse performance in nonconvex settings if early iterates are far from optimal. Non-uniform averaging is a way to achieve the stability benefits of iterate averaging while potentially speeding up convergence compared to uniform averaging. In this way, one can view non-uniform averaging as an interpolation between average-iterate and last-iterate algorithms.

One popular non-uniform averaging scheme is the exponential moving average (EMA). For an algorithm with iterates $z^{(0)},...,z^{(T)}$, the EMA at iterate $t$ is defined recursively as
\begin{align*}
z_{EMA}^{(t)} = \beta z_{EMA}^{(t-1)} + (1-\beta) z_{EMA}^{(t-1)}
\end{align*}
where $z_{EMA}^{(0)} = z^{(0)}$ and $\beta < 1$. A typical value for $\beta$ is $0.999$. \cite{yazici2018unusual} and \cite{gidel2018variational} show that uniform and EMA schemes can improve GAN performance on a variety of datasets. \cite{mescheder2018training} and \cite{karras2017progressive} use EMA to evaluate the GAN models they train, showing the effectiveness of EMA in practice. 

In terms of theoretical results, \cite{kroer2019first} studies saddle point problems of the form $\min_{x_1} \max_{x_2} f(x_1) + g(x_1) + \langle Kx_1,x_2 \rangle - h^*(x_2)$, where $f$ is a smooth convex function, $g$ and $h$ are convex functions with easily computable prox-mappings, and $K$ is some linear operator. They show that for certain algorithms, linear averaging and quadratic averaging schemes are provably at least as good as the uniform average scheme in terms of iterate complexity. \cite{abernethy2018faster} show how linear and exponential averaging schemes can be used to achieve faster convergence rates in some specific convex-concave games.

Overall, while non-uniform averaging is appealing for a variety of reasons, there is currently no theoretical explanation for why it outperforms uniform averages or why it would converge at all in many settings. In fact, one natural way to show convergence for an EMA scheme would be to show last-iterate convergence.

\section{Proof of linear convergence rate under PL condition}\label{App:PL}
Here we present a classic proof of \Cref{thm:discrete_pl}.
\begin{proof}[Proof of \Cref{thm:discrete_pl}]
	\begin{align}
	f(x^{(k+1)}) - f(x^*) &\le f(x^{(k)}) - f(x^*) - \frac{1}{2L}\norme{\nabla f(x^{(k)})}^2\\
	&\le  f(x^{(k)}) - f(x^*) - \frac{\alpha}{L}(f(x^{(k)}) - f(x^*))\\
	&= \pr{1-\frac{\alpha}{L}}(f(x^{(k)}) - f(x^*)) \label{eq:PL-one-step}
	\end{align}
	where the first line comes from smoothness and the update rule for gradient descent, the second inequality comes from the PL condition. Applying the last line recursively gives the result.
\end{proof}

\section{Comparison of \Cref{thm:hgdconvergence} to \cite{du2019linear}}\label{app:du}
In this section, we compare our results in \Cref{thm:hgdconvergence} to those of \cite{du2019linear}. \cite{du2019linear} prove a rate for SGDA when $g$ is $L$-smooth and convex in $x_1$ and $L$-smooth and $\mu$-strongly concave in $x_2$ and $\nabla_{x_1x_2}^2 g$ is some fixed matrix $A$. The specific setting they consider is to find the unconstrained min-max for a function $g: \R^{d_1} \times \R^{d_2} \ra \R$ defined as $g(x_1,x_2) = f(x_1) + x_2^\top A x_1 - h(x_2)$ where $f$ is convex and smooth, $h$ is strongly-convex and smooth, and $A\in \R^{d_2 \times d_1}$ has rank $d_1$ (i.e. $A$ has full column rank).

Their rate uses the potential function $P_t = \lambda a_t + b_t$, where we have:
\begin{align}
\lambda &= \frac{2L \Gamma(L + \frac{\Gamma^2}{\mu})}{\mu \gamma^2}\\
a_k &= \norm{x_1^{(k)}-x_1^*}\\
b_k &= \norm{x_2^{(k)}-x_2^*}
\end{align}
where $(x_1^*, x_2^*)$ is the min-max for the objective. Their rate (Theorem 3.1 in \cite{du2019linear}) is
\begin{align}
P_{k+1} \le \pr{1- c\frac{\mu^2 \gamma^4}{L^3 \Gamma^2 (L + \frac{\Gamma^2}{\mu})}}^k P_k
\end{align}
 for some constant $c>0$. To translate this rate into bounds on $\norm{\xi}$, we can use the smoothness of $g$ in both of its arguments to note that $\norm{\frac{\partial g}{\partial x_1}(x_1,x_2)} = \norm{\frac{\partial g}{\partial x_1}(x_1,x_2) - \frac{\partial g}{\partial x_1}(x^*_1,x^*_2)}  \le L\norm{x_1^{(k)} - x_1^*}$ and likewise for $x_2$. So the rate on $P_k$ translates into a rate on $\norm{\xi}$ with some additional factor in front.

Their rate and our rate are incomparable -- neither is strictly better. For instance when $\gamma = \Gamma$ is much larger than all other quantities, their rates simplify to $\pr{1- O\pr{\frac{\mu^3}{L^3}}}^k$, while ours go to $\pr{1 - O\pr{\frac{\gamma^2}{L_{\HH}}}}^{k/2}$. While our convergence rate requires the sufficiently bilinear condition \cref{eq:sc-noncvex-cond} to hold, we do not require convexity in $x_1$ or concavity in $x_2$. Moreover, we allow $\nabla_{x_1x_2}^2 g$ to change as long as the bounds on the singular values hold whereas \cite{du2019linear} require $\nabla_{x_1x_2}^2 g$ to be a fixed matrix.

\section{Nonconvex-nonconcave setting where \Cref{as:no_spurious} and the conditions for \Cref{thm:hgdconvergence} hold}\label{app:example}
In this section we give a concrete example of a nonconvex-nonconcave setting where \Cref{as:no_spurious} and the conditions for \Cref{thm:hgdconvergence} hold. We choose this example for simplicity, but one can easily come up with other more complicated examples.

For our example, we define the following function:
\begin{align}\label{eq:F}
F(x) = \begin{cases}
-3(x+\frac{\pi}{2})  &\text{for } x \le -\frac{\pi}{2} \\
-3\cos x &\text{for }  -\frac{\pi}{2} < x \le \frac{\pi}{2}\\
-\cos x + 2x - \pi &\text{for } x > \frac{\pi}{2}
\end{cases}
\end{align}
The first and second derivatives of $F$ are as follows:
\begin{align}\label{eq:F_first}
F'(x) = \begin{cases}
-3  &\text{for } x \le -\frac{\pi}{2} \\
3\sin x &\text{for }  -\frac{\pi}{2} < x \le \frac{\pi}{2}\\
\sin x + 2 &\text{for } x > \frac{\pi}{2}
\end{cases}
\end{align}
\begin{align}\label{eq:F_second}
F''(x) = \begin{cases}
0 &\text{for } x \le -\frac{\pi}{2} \\
3\cos x &\text{for }  -\frac{\pi}{2} < x \le \frac{\pi}{2}\\
\cos x &\text{for } x > \frac{\pi}{2}
\end{cases}
\end{align}

From \Cref{fig:F}, we can see that this function is neither convex nor concave.

\begin{figure}[!htb]
	\begin{center}
		\includegraphics[width=10cm]{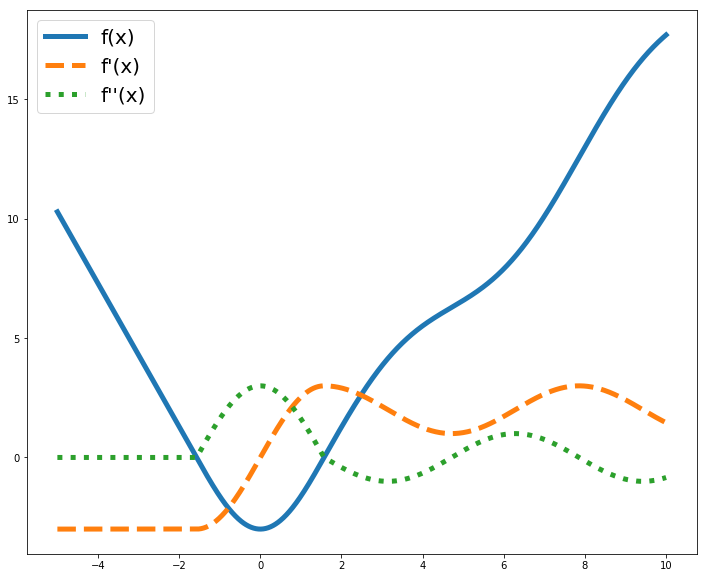}
	\end{center}
	\caption{Plot of nonconvex function $F(x)$ defined in \cref{eq:F}, as well as its first and second derivatives  \label{fig:F}}
\end{figure}

Our objective will be $g(x_1,g_2) = F(x_1) + 4x_1^\top x_2 - F(x_2)$. Note that $L = 3$ because $F''(x) \le 3$ for all $x$. Also, $\gamma = \Gamma = 4$ since $\nabla^2_{x_1x_2} g = 4I$.

First, we show that $g$ satisfies \Cref{as:main}. We see that $g$ has a critical point at $(0,0)$. Moreover, $g$ is $(L_1,L_2,L_3)$-Lipschitz for any finite-sized region of $\R^2$. Thus, if we assume our algorithm stays within a ball of some radius $R$, the $(L_1,L_2,L_3)$-Lipschitz assumption will be satisfied. Since our algorithm does not diverge and indeed converges at a linear rate to the min-max, this assumption is fairly mild.

Next, we show that $g$ satisfies condition \cref{eq:sc-noncvex-cond}. Condition \cref{eq:sc-noncvex-cond} requires $\gamma^4 > 4L^2 \Gamma^2$ for $g$. We see that this holds because $\gamma^4 = 4^4 = 256$ and $4L\Gamma^2 = 4*3*4^2 = 192$.

Therefore, the assumptions of \Cref{thm:hgdconvergence} are satisfied.

We can also show that this objective satisfies \Cref{as:no_spurious}, so we get convergence to the min-max of $g$. We will show that $g$ has only one critical point (at $(0,0)$) and that this critical point is a min-max. We first give a ``proof by picture'' below, showing a plot of $g$ in \Cref{fig:objective}, along with plots of $g(\cdot,0)$ and $g(0,\cdot)$ showing that $(0,0)$ is indeed a min-max.

\begin{figure}[!htb]
	\begin{center}
		\includegraphics[width=14cm]{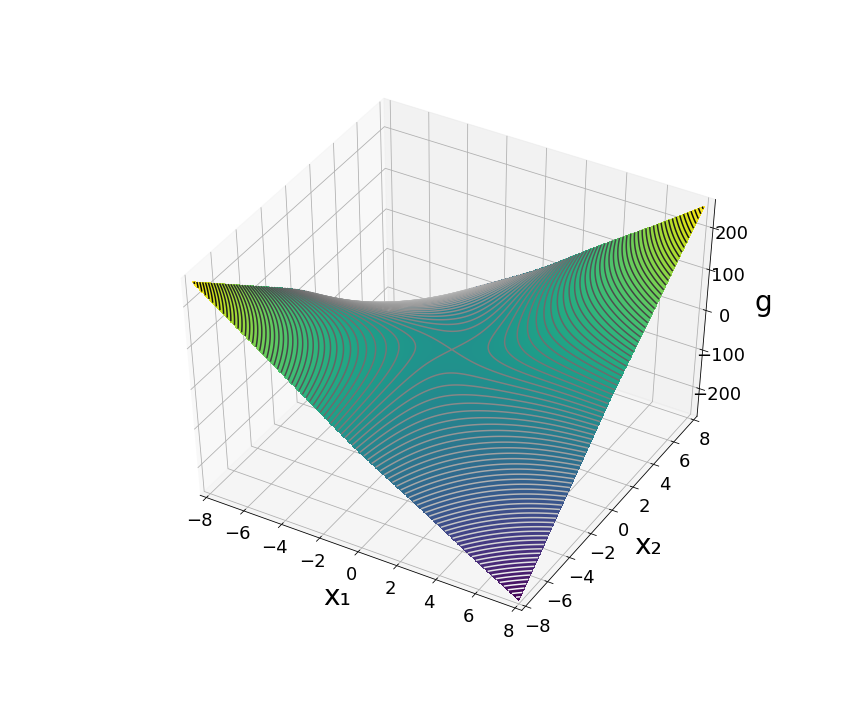}
	\end{center}
	\caption{Plot of nonconvex-nonconcave $g(x_1,x_2) = F(x_1) + 4x_1^\top x_2 - F(x_2)$ \label{fig:objective}}
\end{figure}

\begin{figure}[!htb]
	\begin{center}
		\includegraphics[width=12cm]{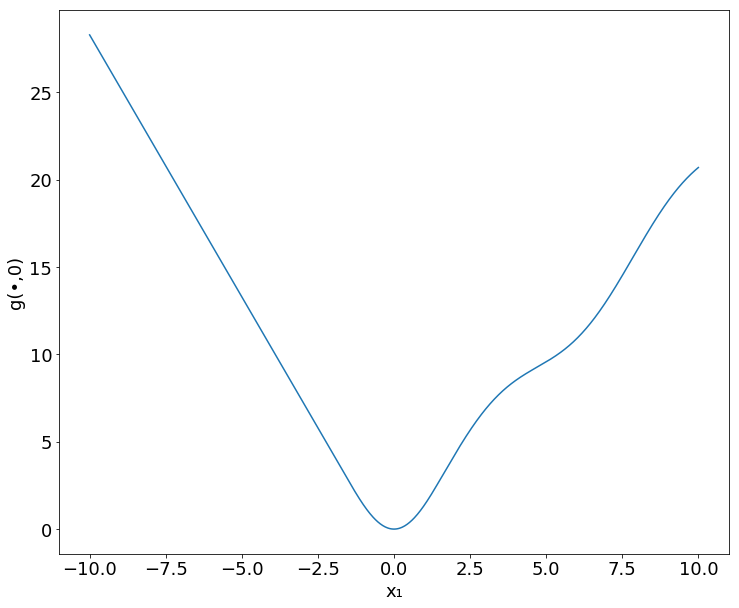}
	\end{center}
	\caption{Plot of $g(\cdot,0)$. We can see that there is only one min and it occurs at $x_1 = 0$. \label{fig:objective_x1}}
\end{figure}

\begin{figure}[!htb]
	\begin{center}
		\includegraphics[width=12cm]{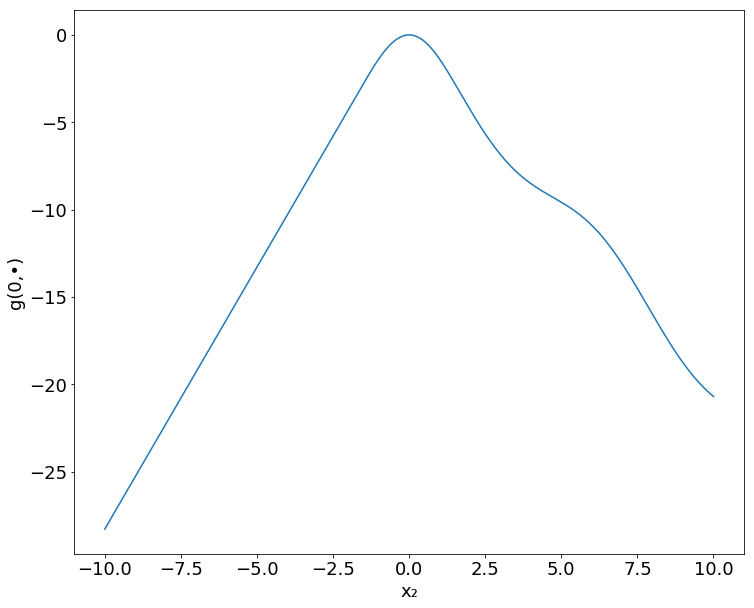}
	\end{center}
	\caption{Plot of $g(0,x_2)$. We can see that there is only one max and it occurs at $x_2 = 0$. \label{fig:objective_x2}}
\end{figure}

We can also formally show that $(0,0)$ is the unique critical point of $g$ and that it is a min-max. We prove this for completeness, although the calculations more or less amount to a simple case analysis. Let us look at the derivatives of $g$ with respect to $x_1$ and $x_2$:
\begin{align}
\frac{\partial g}{\partial x_1}(x_1,x_2) = \begin{cases}
-3 + 4x_2  &\text{for } x_1 \le -\frac{\pi}{2} \\
3\sin x_1 + 4x_2 &\text{for }  -\frac{\pi}{2} < x_1 \le \frac{\pi}{2}\\
\sin x_1 + 2 + 4x_2 &\text{for } x_1 > \frac{\pi}{2}
\end{cases}
\end{align}
\begin{align}
\frac{\partial g}{\partial x_2}(x_1,x_2) = \begin{cases}
3 + 4x_1  &\text{for } x_2 \le -\frac{\pi}{2} \\
-3\sin x_2 + 4x_1 &\text{for }  -\frac{\pi}{2} < x_2 \le \frac{\pi}{2}\\
-\sin x_2 + 2 + 4x_1 &\text{for } x_2 > \frac{\pi}{2}
\end{cases}
\end{align}
Observe that if $x_1 \in [-\frac{\pi}{2},\frac{\pi}{2}]$ then critical points of $g$ must satisfy $3 \sin x_1 + 4x_2 = 0$, which implies that $x_2 \in [-\frac34, \frac34]$. Likewise, if $x_2 \in [-\frac{\pi}{2}, \frac{\pi}{2}]$, then critical points of $g$ must have $x_1 \in [-\frac34, \frac34]$.  We show that this implies that $g$ only has critical points where $x_1$ and $x_2$ are both in the range $[-\frac{\pi}{2},\frac{\pi}{2}]$.

Suppose $g$ had a critical point such that $x_1 \le -\frac{\pi}{2}$. Then this critical point must satisfy $x_2 = \frac34$. But from our observation above, if a critical point has $x_2 = \frac34$, then $x_1$ must lie in $[-\frac34,\frac34]$, which contradicts $x_1 \le -\frac{\pi}{2}$.

Next, suppose $g$ had a critical point such that $x_1 > \frac{\pi}{2}$. Then this critical point must satisfy $x_2 = -\frac{1}{4}(\sin x_1 + 2)$, which implies that $x_2 \in [-\frac34,\frac34]$. But then by the observation above, $x_1$ must lie in $[-\frac34, \frac34]$, which contradicts $x_1 > \frac{\pi}{2}$.

From this we see that any critical point of $g$ must have $x_1 \in [-\frac{\pi}{2}, \frac{\pi}{2}]$. We can make analogous arguments to show that any critical point of $g$ must have $x_2 \in [-\frac{\pi}{2}, \frac{\pi}{2}]$.

From this, we can conclude that all critical points of $g$ must satisfy the following:
\begin{align}
3\sin x_1 &+ 4x_2 = 0\\
-3\sin x_2 &+ 4x_1 = 0
\end{align}
These equations imply the following:
\begin{align}
x_1 &= \frac{3}{4} \sin x_2\\
x_2 &= -\frac34 \sin x_1\\
\Ra x_1 &= \frac{3}{4}\sin \pr{-\frac{3}{4} \sin x_2}\\
\Ra x_2 &= -\frac{3}{4}\sin \pr{\frac{3}{4} \sin x_2}
\end{align}
That is, for all critical points of $g$, $x_1$ must be a fixed point of $h_1(x) = \frac{3}{4}\sin \pr{-\frac{3}{4} \sin x}$ and $x_2$ must be a fixed point of $h_2(x) = -\frac{3}{4}\sin \pr{\frac{3}{4} \sin x}$. Since $| h_1'(x)| < 1$ and $|h_2'(x)| < 1$ always, $h_1$ and $h_2$ are contractive maps, so they have only one fixed point each. Thus, $g$ will only have one critical point, namely the point $(x_1, x_2)$ such that $x_1$ is the unique fixed point of $h_1$ and $x_2$ is the unique fixed point of $h_2$.

Finally, we can observe that $(0,0)$ is a critical point of $g$, so it must be the unique critical point of $g$. One can also see that this is a min-max by looking at the second derivatives of $F$ in \eqref{eq:F_second}.

\section{Proof of \Cref{lem:unique_crit}}\label{app:unique_crit}

To prove \Cref{lem:unique_crit}, we will use the following lemma:
\begin{lemma}\label{lem:unique_crit_app}
	Let $g(x_1,x_2) = f(x_1) + cx_1^\top x_2 - h(x_2)$ where $f$ and $h$ are $L$-smooth. Then if $c > L$, $g$ has a unique critical point.
\end{lemma}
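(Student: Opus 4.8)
The plan is to reduce the two critical-point equations to a single fixed-point equation in $x_1$, and then show that the associated map is a contraction precisely when $c > L$. First I would write out the two components of $\xi$. Stationarity means $\frac{\partial g}{\partial x_1} = \nabla f(x_1) + c x_2 = 0$ and $\frac{\partial g}{\partial x_2} = c x_1 - \nabla h(x_2) = 0$. The first equation gives $x_2 = -\rc{c}\nabla f(x_1)$, and substituting this into the second yields the fixed-point equation $x_1 = \rc{c}\nabla h\pr{-\rc{c}\nabla f(x_1)} =: \Phi(x_1)$, where $\Phi \colon \R^d \to \R^d$.

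Next I would bound the Lipschitz constant of $\Phi$. Since $f$ and $h$ are $L$-smooth, $\nabla f$ and $\nabla h$ are each $L$-Lipschitz. Chaining these two bounds through the two $\rc{c}$ prefactors gives
\[
\norme{\Phi(u) - \Phi(v)} \le \rc{c}\, L \,\rc{c}\, \norme{\nabla f(u) - \nabla f(v)} \le \frac{L^2}{c^2}\,\norme{u-v}.
\]
Under the hypothesis $c > L$ we have $L^2/c^2 < 1$, so $\Phi$ is a contraction on $\R^d$.

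Finally, since $\R^d$ is complete, the Banach fixed-point theorem gives a unique fixed point $x_1^\ast$ of $\Phi$, and then $x_2^\ast = -\rc{c}\nabla f(x_1^\ast)$ is uniquely determined; the pair $(x_1^\ast, x_2^\ast)$ is therefore the unique critical point of $g$. Note that the Banach theorem furnishes both existence and uniqueness simultaneously, so no separate existence assumption is required. I expect no serious obstacle here: the only point needing care is chaining the two Lipschitz estimates in the correct order so that the two factors of $\rc{c}$ from the substitutions combine with the two factors of $L$ to land exactly the contraction constant $L^2/c^2$.
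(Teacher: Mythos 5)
Your proof is correct and follows essentially the same route as the paper's: reduce the stationarity system to a fixed-point equation for one variable and apply the Banach fixed-point theorem with contraction constant $L^2/c^2 < 1$. The only cosmetic difference is that you eliminate $x_2$ and contract in $x_1$, whereas the paper eliminates $x_1$ and contracts in $x_2$ (and itself remarks that the symmetric argument works).
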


\begin{proof}[Proof of \Cref{lem:unique_crit}]
Condition \cref{eq:sc-noncvex-cond} is as follows:
\begin{align}
				(\gamma^2 + \rho^2)(\mu^2 + \gamma^2) - 4L^2 \Gamma^2 > 0.
\end{align}
Note that in our setting, $\gamma = \Gamma = c$. Next, observe that if $\nabla^2 f(x_1)$ and $\nabla^2 h(x_2)$ each have a 0 eigenvalue for some $x_1$ and $x_2$, condition \cref{eq:sc-noncvex-cond} reduces to:
\begin{align}
c > 2L.
\end{align}
Then by \Cref{lem:unique_crit_app}, we see that $g$ must have a unique critical point.
\end{proof}
Next, we prove \Cref{lem:unique_crit_app}.
\begin{proof}[Proof of \Cref{lem:unique_crit_app}]
Suppose our objective is $g(x_1,x_2) = f(x_1) + cx_1^\top x_2 - h(x_2)$ where $f$ and $h$ are both $L$-smooth convex functions. Critical points of $g$ must satisfy the following:
\begin{align}
\nabla f(x_1) &+ cx_2 = 0\\
-\nabla h(x_2) &+ c x_1 = 0\\
\Ra x_1&= \frac1c \nabla h(x_2)\\
\Ra x_2&= -\frac1c \nabla f\pr{\frac1c\nabla h(x_2)}
\end{align}
In other words, $x_2$ must be a fixed point of $F(z) =  -\frac1c\nabla f(\frac1c\nabla h(z))$. The function $F$ will have a unique fixed point if it is a contractive map. We now show that for $c > L$, this is the case.
\begin{align}
\norme{F(u) - F(v)} &= \norme{\frac1c\nabla f\pr{\frac1c\nabla h(u)}- \frac1c\nabla f\pr{\frac1c\nabla h(v)}}\\
&\le \frac{L}{c} \cdot \norme{\frac1c\nabla h(u) - \frac1c\nabla h(v)}\\
&\le \frac{L^2}{c^2} \norme{u-v} < \norme{u-v}
\end{align}
where the inequalities follow from smoothness of $f$ and $h$. An analogous property can be shown by solving for $x_1$ instead. Thus, if $c > L$, then $g$ will have a unique fixed point.

Condition \cref{eq:sc-noncvex-cond} is thus a sufficient condition for the existence of a unique critical point for the class of objectives above.
\end{proof}

\section{Applications}\label{app:applications}
In this section, we discuss how our results can be applied to various settings. One simple setting is the Dirac-GAN from \cite{mescheder2018training}, where $g(x_1,x_2) = \min_{x_1} \max_{x_2} f(x_1^\top x_2) - f(0)$ for some function $f$ whose derivative is always non-zero. When $f(t) = t$, the Dirac-GAN is just a bilinear game, so HGD will converge globally to the Nash Equilibrium (NE) of this Dirac-GAN, as shown in \cite{balduzzi2018mechanics}. Our results prove global convergence rates for HGD on the Dirac-GAN even when a small smooth convex regularizer is added for the discriminator or subtracted for the generator.  Moreover, Lemma 2.2 of \cite{mescheder2018training} shows that the diagonal blocks of the Jacobian are 0 at the NE for arbitrary $f$ with non-zero derivative. As such, HGD will achieve the convergence rates in this paper in a region around the NE for the Dirac-GAN for arbitrary $f$ with non-zero derivative even when a small smooth convex regularizer is added for either player.

\cite{du2019linear} list several applications where the min-max formulation is relevant, such as in ERM problems with a linear classifier. Given a data matrix $A$, the ERM problem involves solving $\min_x \ell(Ax) + f(x)$ for some smooth, convex loss $\ell$ and smooth, convex regularizer $f$. This problem has the saddle point formulation $\min_x \max_y y^\top A x - \ell^*(y) +f(x)$. According to \cite{du2019linear}, this formulation can be advantageous when it allows a finite-sum structure, reduces communication complexity in a distributed setting, or allows some sparsity structure to be exploited. Our results show that linear rates are possible for this problem if $A$ is square, well-conditioned, and sufficiently large compared to $\ell$ and $f$.

\section{Proofs for \Cref{sec:proof_hgd}}\label{sec:hgd_app}
\label{app:linalg_proofs}
In this section, we prove our main results about the convergence of HGD, starting with some key technical lemmas.

\subsection{Proof of \Cref{lem:smooth}}
\begin{proof}
	We have $\nabla \HH = \xi^\top J$. Let $u,v\in \R^d\times \R^d$. Then we have:
	\begin{align*}
	\norme{\nabla \HH(u) - \nabla \HH(v)} &= \norme{ \xi^\top(u) J(u)- \xi^\top(v) J(v)}\\
	& = \norme{ \xi^\top(u) J(u) - \xi^\top(u) J(v) + \xi^\top(u) J(v) - \xi^\top(v) J(v)}\\
	& \le \norme{ \xi^\top(u) J(u) - \xi^\top(u) J(v)} + \norm{\xi^\top(u) J(v) - \xi^\top(v) J(v)}\\
	&\le \norm{\xi(u)}\cdot \norme{J(u) - J(v)} +  \norme{ \xi(u)- \xi(v)}\cdot \norm{J(v)}\\
	&\le (L_1L_3 + L_2^2) \norm{u-v}\qedhere
	\end{align*} \end{proof}

\subsection{Proof of Lemma~\ref{lem:eig_lower}}
\begin{proof}
	Note that $HH^\top = \begin{pmatrix}
	M_1^2 + BB^T & -M_1 B - BM_2\\
	-(M_1 B + BM_2)^T & M_2^2 + B^T B
	\end{pmatrix} = \begin{pmatrix}
	M_1 & -B \\ -B^T & M_2
	\end{pmatrix}^2$.
	
	Now let $Z = \begin{pmatrix}
	M_1 & -B \\ -B^T & M_2
	\end{pmatrix}$. It suffices to show that for any eigenvalue $\delta$ of $Z$, $|\delta| \le \eps$.  For the sake of contradiction, let $v$ be an eigenvalue of $Z$ with eigenvalue $\delta$ such that $\abs{\delta} \le \epsilon$. Let $v = \begin{pmatrix}
	v_1 \\ v_2
	\end{pmatrix}$. Since $Zv = \delta v$ for $\abs{\delta} \le \epsilon$ and $M_1 \succ \eps I$ and $M_2 \prec -\eps I$, we must have $v_1 \neq 0$ and $v_2 \neq 0$. Then we have:
	\begin{align}
	\begin{pmatrix}
	M_1 v_1 - B v_2 \\ M_2 v_2 - B^\top v_1 
	\end{pmatrix}
	=\delta \begin{pmatrix}
	v_1 \\ v_2
	\end{pmatrix}
	\end{align}
	This implies
	\begin{align}
	(M_1 - \delta I )v_1 = Bv_2\\
	(M_2 - \delta I) v_2 = B^\top v_1 \label{eq:second}
	\end{align}
	Let $\hat{M}_1 = M_1 - \delta I$ and let $\hat{M}_2 = M_2 - \delta I$. Note that $\hat{M}_1 \succ 0$ and $\hat{M}_2 \prec 0$. Then we can write $v_1 = \hat{M}_1^{-1} B v_2$. Further, we can substitute into \cref{eq:second} to get
	\begin{align}
	\hat{M}_2 v_2 = B^\top \hat{M}_1^{-1} B v_2\\
	\iff -\hat{M}_2^{-1} B^\top \hat{M}_1^{-1}B v_2 = -v_2
	\end{align}
	In other words, $v_2$ is an eigenvector of $-\hat{M}_2^{-1} B^\top \hat{M}_1^{-1}B$ with eigenvalue $-1$. Let $A = -\hat{M}_2^{-1}$ and $T = B^\top \hat{M}_1^{-1} B$. Note that $A$ is positive definite and $T$ is PSD. Then we have:
	\begin{align}
	AT = A^{1/2} (A^{1/2} T A^{1/2}) A^{-1/2}
	\end{align}
	Since $A^{1/2} T A^{1/2}$ is PSD, and $AT$ is similar to $A^{1/2} T A^{1/2}$, we must have that all of the eigenvalues of $AT$ are nonnegative. This contradicts that $v_2$ is an eigenvector of $AT$ with eigenvalue $-1$.
	
	Thus, all eigenvalues of $Z$ must have magnitude greater than $\epsilon$.
\end{proof}

\subsection{Proof of Lemma~\ref{lem:eig_convex}}
\begin{proof}
	Suppose $\lambda$ is an eigenvalue of $HH^\top$ with eigenvector $v = \begin{pmatrix}
	v_1 \\ v_2
	\end{pmatrix}$. WLOG, suppose $\lambda < \sigma_{\min}^2(C)$. Since $v$ is an eigenvector, we have:
	\begin{align}
	\begin{pmatrix}
	A^2  + CC^\top & -AC \\ -C^\top A & C^\top C
	\end{pmatrix} \begin{pmatrix}
	v_1 \\ v_2
	\end{pmatrix} = \lambda \begin{pmatrix}
	v_1 \\ v_2
	\end{pmatrix}
	\end{align}
	Thus, we have:
	\begin{align}
	(A^2 + CC^\top - \lambda I ) v_1 - AC v_2 &= 0 \label{eq:first_line} \\
	-C^\top Av_1 + (C^\top C - \lambda I)v_2 &= 0 \label{eq:second_line}
	\end{align}
	Since $\lambda < \sigma_{\min}^2(C)$, we have that $C^\top C-\lambda I$ is invertible, so we can write $v_2 = (C^\top C - \lambda I)^{-1}C^\top Av_1$ from the \cref{eq:second_line}. Plugging this into \cref{eq:first_line} gives:
	\begin{align}
	(A^2 + CC^\top - \lambda I - AC(C^\top C - \lambda I)^{-1} C^\top A)v_1 &= 0\\
	(A(I-C(C^\top C - \lambda I)^{-1}C^\top)A +CC^\top -\lambda I)v_1 &= 0 \label{eq:pd_first}
	\end{align}
	Write the SVD of $C$ as $C = U\Sigma V^\top$. Then we have:
	\begin{align}
	C(C^\top C - \lambda I )^{-1} C^\top &= U\Sigma V^\top (V \Sigma U^\top U \Sigma V^\top - \lambda I )^{-1} V\Sigma U^\top\\
	&=  U\Sigma V^\top (V (\Sigma^2 - \lambda I) V^\top)^{-1} V\Sigma U^\top\\
	&=  U\Sigma V^\top V^{-T} (\Sigma^2 - \lambda I)^{-1} V^{-1} V\Sigma U^\top\\
	&= U \Sigma^2 (\Sigma^2 - \lambda I)^{-1} U^\top\\
	&= U D U^\top
	\end{align}
	where the second line follows because $VV^\top = I$ when $C$ is full rank and where $D$ is a diagonal matrix such that $D_{ii} = \frac{\sigma_i^2(C)}{\sigma_i^2 (C) - \lambda}$.
	
	Let $M = I - D$, so $M$ is diagonal with $M_{ii} = \frac{-\lambda}{ \sigma_i^2(C) - \lambda}$. Then \cref{eq:pd_first} becomes:
	\begin{align}
	(AMA + CC^\top - \lambda I)v_1 = 0 \label{eq:pd_con}
	\end{align}
	This means $T = AMA + CC^\top - \lambda I$ has a 0 eigenvalue. A simple lower bound for the eigenvalues of $T$ is 
	\begin{align}
	\lambda_{\min}(T) \ge - \norme{A}^2 \frac{\lambda}{\sigma_{\min}^2 - \lambda} + \sigma_{\min}^2(C) - \lambda
	\end{align}
	
	We will show that if $\lambda < \delta$, where $\delta = \sigma^2_{\min}(C) +\frac{\norme{A}^2}{2} - \sqrt{ (\sigma^2_{\min} + \frac{\norme{A}^2}{2})^2 - \sigma^4_{\min}}$, then $\lambda_{\min}(T) > 0$, which is a contradiction. It suffices to show the following inequality:
	\begin{align}
	&- \norme{A}^2 \frac{\lambda}{\sigma_{\min}^2 - \lambda} + \sigma_{\min}^2(C) - \lambda > 0\\
	\iff & \sigma_{\min}^2(C) - \lambda > \norme{A}^2 \frac{\lambda}{\sigma_{\min}^2 - \lambda}\\
	\iff & (\sigma_{\min}^2(C) - \lambda)^2 > \norme{A}^2 \lambda\\
	\iff & \lambda^2 - (2\sigma_{\min}^2(C) + \norme{A}^2) \lambda + \sigma_{\min}^4 (C) > 0 \label{eq:quad}
	\end{align}
	\cref{eq:quad} has zeros at the following values:
	\begin{align}
	\sigma_{\min}^2(C) + \frac{\norme{A}^2}{2} \pm \sqrt{ \pr{\sigma^2_{\min} + \frac{\norme{A}^2}{2}}^2 - \sigma^4_{\min}(C)}
	\end{align}
	Since \cref{eq:quad} is a convex parabola, if $\lambda$ is less than both zeros, we will have proved \cref{eq:quad}. This is clearly true if $\lambda < \delta$.
	
	As a last step, we can give a slightly nicer form of $\delta$, using Lemma~\ref{lem:sqrt}. Letting $x = \sigma_{\min}^2(C) + \frac{\norme{A}^2}{2}$ and $c = \sigma_{\min}^4(C)$, we have
	$\delta > \frac{\sigma_{\min}^4(C)}{2\sigma^2_{\min}(C) + \norme{A}^2}$. So to reiterate, if $\lambda < \frac{\sigma_{\min}^4(C)}{2\sigma^2_{\min}(C) + \norme{A}^2} < \delta$, then \cref{eq:quad} holds, so $T \succ 0$, which contradicts \cref{eq:pd_con}.
\end{proof}

\begin{lemma}\label{lem:sqrt}
	For $x\in (0,1)$ and $c\in (0,x^2)$, we have:
	\begin{align*}
	x - \sqrt{x^2 - c} > \frac{c}{2x}
	\end{align*}
\end{lemma}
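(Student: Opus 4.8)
The plan is to eliminate the square root by rationalizing, which turns the bound into a transparent comparison of denominators. First I would multiply and divide by the conjugate:
\[
x - \sqrt{x^2 - c} = \frac{\left(x - \sqrt{x^2-c}\right)\left(x + \sqrt{x^2-c}\right)}{x + \sqrt{x^2-c}} = \frac{c}{x + \sqrt{x^2-c}},
\]
where the denominator is strictly positive because $x > 0$ and $c < x^2$ guarantees $x^2 - c \ge 0$. This is the key simplification: it replaces the difference of two nearly-equal quantities with a single quotient whose size is controlled entirely by the denominator.

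With this identity in hand, the claimed inequality $\frac{c}{x + \sqrt{x^2-c}} > \frac{c}{2x}$ is, after dividing through by $c > 0$, equivalent to
\[
\frac{1}{x + \sqrt{x^2-c}} > \frac{1}{2x},
\]
which in turn is equivalent to $x + \sqrt{x^2-c} < 2x$, i.e.\ $\sqrt{x^2 - c} < x$. This last inequality holds because $\sqrt{x^2 - c} < \sqrt{x^2} = x$, using only $c > 0$ and $x > 0$. Chaining these equivalences backward yields the desired bound.

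There is essentially no obstacle here; the only points requiring a word of care are checking that the denominator $x + \sqrt{x^2 - c}$ is positive (so that the inequality direction is preserved when inverting) and that $x^2 - c \ge 0$ (so the square root is real), both of which follow immediately from the hypotheses $x > 0$ and $0 < c < x^2$. I note in passing that the restriction $x \in (0,1)$ is not actually used; $x > 0$ suffices. An equivalent route, if one prefers to avoid rationalizing, is to rewrite the target as $x - \frac{c}{2x} > \sqrt{x^2 - c}$, observe that the left side equals $\frac{2x^2 - c}{2x} > 0$ since $c < x^2$, and then square both nonnegative sides to reduce the claim to $\frac{c^2}{4x^2} > 0$; but the conjugate computation is cleaner and makes the role of $c > 0$ most visible.
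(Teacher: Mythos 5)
Your proof is correct, but it takes a different route from the paper's. The paper factors out $x$ and writes $x - \sqrt{x^2-c} = x - x\sqrt{1 - c/x^2}$, then applies the elementary bound $\sqrt{1-t} < 1 - t/2$ for $t \in (0,1)$ (which follows from $(1-t/2)^2 = 1 - t + t^2/4 > 1-t$), landing directly on $c/(2x)$. You instead rationalize via the conjugate to get the exact identity $x - \sqrt{x^2-c} = c/\bigl(x + \sqrt{x^2-c}\bigr)$ and then bound the denominator by $2x$ using $\sqrt{x^2-c} < x$. Both arguments are one-liners and use only $x>0$ and $0 < c < x^2$; yours has the mild advantage of being fully self-contained (no auxiliary inequality to justify) and of isolating exactly where the slack comes from, namely replacing $\sqrt{x^2-c}$ by $x$ in the denominator. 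Your side observation that the hypothesis $x \in (0,1)$ is never used and $x>0$ suffices is worth keeping: the paper invokes this lemma with $x = \sigma_{\min}^2(C) + \norme{A}^2/2$ (and with $x = b/2$ in Lemma~\ref{lem:SC-noncvx-eig}), quantities that need not lie in $(0,1)$, so the weaker hypothesis is in fact the one the rest of the paper relies on.
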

\begin{proof}
	\begin{align*}
	x - \sqrt{x^2 - c} = x - x\sqrt{1 - \frac{c}{x^2}} > x - x\pr{1-\frac{c}{2x^2}} = \frac{c}{2x}
	\end{align*}
\end{proof}

\subsection{Proof of Lemma~\ref{lem:hgd-noncvx-linear}}\label{app:lem:hgd_sc-linear}
\begin{proof}
	Let $C(x_1,x_2) = \nabla^2_{x_1 x_2} g(x_1,x_2)$. For all $x \in \R^{d} \times \R^{d}$, $C(x_1,x_2)$ is square and full rank by assumption, so we can apply Lemma~\ref{lem:eig_convex} with $H = J$ at each point $x \in \R^{d}\times \R^{d}$, which gives $\lambda(JJ^\top) \ge \frac{\sigma_{\min}^4(C(x_1,x_2))}{2\sigma_{\min}^2(C(x_1,x_2)) + \norme{ \nabla_{x_1x_1}^2g(x_1,x_2)}^2}$. We have $\norme{\nabla_{x_1x_1}^2 g(x_1,x_2)} \le L$ since $g$ is smooth in $x_1$. Also, $\sigma_{\min}^2(C(x_1,x_2)) \ge \gamma$. Then we have that $JJ^\top \succeq \frac{\gamma^4}{2\gamma^2 + L^2} I$, so by Lemma~\ref{lem:ham_pl}, $\HH$ satisfies the PL condition with parameter $\frac{\gamma^4}{2\gamma^2 + L^2}$.
\end{proof}

\subsection{Proof of Lemma~\ref{lem:hgd_sc-noncvx}}\label{app:lem:hgd_sc-noncvx}
To prove Lemma~\ref{lem:hgd_sc-noncvx}, we use the following lemma:

\begin{lemma}\label{lem:SC-noncvx-eig}
	Let $H = \begin{pmatrix}
	A & C\\
	-C^\top & -B
	\end{pmatrix}$, where $C$ is square and full rank. Moreover, let $c = 	(\sigma_{\min}^2(C) + \lambda_{\min}(A^2))(\lambda_{\min}(B^2) + \sigma_{\min}^2(C)) - \sigma_{\max}^2(C)(\norme{A}+\norme{B})^2$
	and assume $c > 0$. Then if $\lambda$ is an eigenvalue of $HH^\top = \begin{pmatrix}
	A^2 + CC^\top & -AC - CB \\ -C^\top A - BC^\top & B^2 + C^\top C
	\end{pmatrix}$, we must have
	$$\lambda \ge \frac{(\sigma_{\min}^2(C) + \lambda_{\min}(A^2))(\lambda_{\min}(B^2) + \sigma_{\min}^2(C)) - \sigma_{\max}^2(C)(\norme{A}+\norme{B})^2}{(2\sigma_{\min}^2(C)+ \lambda_{\min}(A^2) + \lambda_{\min}(B^2))^2}.$$
\end{lemma}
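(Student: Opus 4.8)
The plan is to mirror the Schur-complement argument behind \Cref{lem:eig_convex}, now carrying the extra diagonal block $B$ through the computation. Suppose for contradiction that $\lambda$ is an eigenvalue of $HH^\top$ with eigenvector $v=(v_1,v_2)$ and that $\lambda$ is strictly smaller than the claimed bound; I will in particular keep $\lambda$ small enough that $C^\top C + B^2 - \lambda I \succ 0$. Writing $HH^\top v = \lambda v$ block-wise gives
\begin{align*}
(A^2 + CC^\top - \lambda I)\,v_1 - (AC + CB)\,v_2 &= 0,\\
-(C^\top A + BC^\top)\,v_1 + (C^\top C + B^2 - \lambda I)\,v_2 &= 0.
\end{align*}
Since $C^\top C + B^2 - \lambda I$ is invertible I solve the second equation for $v_2$ and substitute into the first; if $v_1=0$ then $v_2=0$ as well, so $v_1\neq 0$ and $v_1$ lies in the kernel of the Schur complement
\[
T = A^2 + CC^\top - \lambda I - (AC + CB)(C^\top C + B^2 - \lambda I)^{-1}(C^\top A + BC^\top).
\]
The goal is to show $T\succ 0$ whenever $\lambda$ is below the stated threshold, contradicting $Tv_1=0$.

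The heart of the proof is a lower bound on $\lambda_{\min}(T)$. Using symmetry of $A$ and $B$, the two off-diagonal factors are adjoint, $C^\top A + BC^\top = (AC+CB)^\top$, so the subtracted term is positive semidefinite; I bound it from above in operator norm by combining $\norm{AC+CB}\le \sigma_{\max}(C)(\norm{A}+\norm{B})$ with $(C^\top C + B^2 - \lambda I)^{-1}\preceq \frac{1}{\sigma_{\min}^2(C)+\lambda_{\min}(B^2)-\lambda}I$, while bounding the leading block below by Weyl's inequality, $A^2+CC^\top \succeq (\lambda_{\min}(A^2)+\sigma_{\min}^2(C))I$. This gives
\[
\lambda_{\min}(T) \ge \big(\lambda_{\min}(A^2)+\sigma_{\min}^2(C)-\lambda\big) - \frac{\sigma_{\max}^2(C)(\norm{A}+\norm{B})^2}{\sigma_{\min}^2(C)+\lambda_{\min}(B^2)-\lambda}.
\]

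Clearing the positive denominator turns the requirement $\lambda_{\min}(T)>0$ into the quadratic inequality $\phi(\lambda):=\lambda^2 - \big(2\sigma_{\min}^2(C)+\lambda_{\min}(A^2)+\lambda_{\min}(B^2)\big)\lambda + c > 0$, where $c$ is exactly the quantity assumed positive in the hypothesis. Because $\phi$ is a convex parabola whose roots have positive sum $2\sigma_{\min}^2(C)+\lambda_{\min}(A^2)+\lambda_{\min}(B^2)$ and positive product $c$, both roots are real and positive, so $\phi(\lambda)>0$ for every $\lambda$ below the smaller root $\delta$. Evaluating $\phi(\sigma_{\min}^2(C)+\lambda_{\min}(B^2)) = -\sigma_{\max}^2(C)(\norm{A}+\norm{B})^2 \le 0$ confirms that $\delta$ lies below $\sigma_{\min}^2(C)+\lambda_{\min}(B^2)$, so $\lambda<\delta$ automatically keeps $C^\top C + B^2 - \lambda I$ positive definite and the Schur reduction valid. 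Finally, applying \Cref{lem:sqrt} with $x=\tfrac12\big(2\sigma_{\min}^2(C)+\lambda_{\min}(A^2)+\lambda_{\min}(B^2)\big)$ and constant term $c$ shows $\delta > \frac{c}{2\sigma_{\min}^2(C)+\lambda_{\min}(A^2)+\lambda_{\min}(B^2)}$, which is the advertised lower bound (the same quantity that serves as the PL parameter in \Cref{lem:hgd_sc-noncvx}); hence any $\lambda$ below it forces $T\succ 0$, the desired contradiction.

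I expect the middle step to be the main obstacle. Unlike in \Cref{lem:eig_convex}, the block $B$ destroys the clean $AC(\cdots)C^\top A$ structure that there permitted an exact SVD diagonalization and a bound scaling with $\lambda$; here the cross terms $CB$ and $BC^\top$ force me to settle for the coarser operator-norm estimate above, which is precisely why $\sigma_{\max}(C)$ (and not only $\sigma_{\min}(C)$) enters the final bound, matching the appearance of $\Gamma$ in the ``sufficiently bilinear'' condition \eqref{eq:sc-noncvex-cond}. Checking that this coarser bound still yields the correct threshold and keeping the invertibility bookkeeping consistent are the only delicate points.
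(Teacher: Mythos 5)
Your proof is correct and follows essentially the same route as the paper's: the same block elimination to the Schur complement $T = A^2 + CC^\top - \lambda I - (AC+CB)M^{-1}(AC+CB)^\top$, the same operator-norm bound $\sigma_{\max}^2(C)(\|A\|+\|B\|)^2/(\sigma_{\min}^2(C)+\lambda_{\min}(B^2)-\lambda)$ on the subtracted term, the same quadratic in $\lambda$, and the same appeal to Lemma~\ref{lem:sqrt} (your explicit check that the smaller root lies below $\sigma_{\min}^2(C)+\lambda_{\min}(B^2)$ is in fact a slightly cleaner justification of the invertibility of $M$ than the paper's ``WLOG''). Note that the bound you derive has the \emph{unsquared} denominator $2\sigma_{\min}^2(C)+\lambda_{\min}(A^2)+\lambda_{\min}(B^2)$, which is exactly what the paper's own proof establishes and what Lemma~\ref{lem:hgd_sc-noncvx} uses downstream; the squared denominator in the lemma statement is a typo.
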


\begin{proof}[Proof of Lemma~\ref{lem:SC-noncvx-eig}] 
	This proof resembles that of Lemma~\ref{lem:eig_convex}.
	Let $v =\begin{pmatrix}
	v_1 \\ v_2
	\end{pmatrix}$ be an eigenvector of $HH^\top$ with eigenvalue $\lambda$. Expanding $HH^\top v = \lambda v$, we have:
	\begin{align}
	(A^2 + CC^\top - \lambda I) v_1 - (AC + CB)v_2 &= 0\\
	-(C^\top A + BC^\top)v_1 + 	\underbrace{(B^2 + C^\top C-\lambda I)}_M v_2 &= 0\\
	\Ra v_2 = M^{-1} (C^\top A + BC^\top)v_1&\\
	\Ra (-(AC + CB)M^{-1}(C^\top A + BC^\top) + A^2 + CC^\top &- \lambda I)v_1 = 0 \label{eq:contradict}
	\end{align}
	where $M$ is invertible because $C^\top C$ is positive definite and WLOG, we may assume that $\lambda < \lambda_{\min}(C^\top C) = \sigma_{\min}^2(C)$.
	We will show that if the assumptions in the statement of the lemma hold, then we get a contradiction if $\lambda$ is below some positive threshold. In particular, we show that the following inequality holds for small enough $\lambda$ (this inequality contradicts \cref{eq:contradict}):
	\begin{align*}
	\sigma_{\min}^2(C) - \lambda + \lambda_{\min}(A^2) &> \sigma_{\max}^2(C) (\norme{A} + \norme{B})^2 \norme{M^{-1}}\\
	\La \sigma_{\min}^2(C) - \lambda + \lambda_{\min}(A^2) &> \frac{\sigma_{\max}^2(C)}{\lambda_{\min}(B^2) + \sigma_{\min}^2(C) - \lambda}(\norme{A} + \norme{B})^2\\
	\iff \lambda^2 - (2\sigma_{\min}^2(C)+ \lambda_{\min}(A^2) + \lambda_{\min}(B^2))\lambda + &\\
	(\sigma_{\min}^2(C) + \lambda_{\min}(A^2))(\lambda_{\min}(B^2) + \sigma_{\min}^2(C)) - &\sigma_{\max}^2(C)(\norme{A}+\norm{B})^2 > 0
	\end{align*}
	Letting $b = 2\sigma_{\min}^2(C)+ \lambda_{\min}(A^2) + \lambda_{\min}(B^2)$, we can solve for the zeros of the above equation:
	\begin{align}
	\lambda = \frac{b \pm \sqrt{b^2 - 4c}}{2}
	\end{align}
	Note that we have $c > 0$ by assumption, so this equation has only positive roots. Note also that $b^2 > 4c$, so the roots will not be imaginary. Then we see that if $\lambda < \delta = \frac{b - \sqrt{b^2 - 4c}}{2}$, we get a contradiction. Using Lemma~\ref{lem:sqrt}, we see that $\delta > \frac{c}{b}$. So we've proven that $\lambda < \frac{c}{b}$ gives a contradiction, so we must have $\lambda \ge \frac{c}{b}$, i.e.
	
	$$\lambda \ge \frac{(\sigma_{\min}^2(C) + \lambda_{\min}(A^2))(\lambda_{\min}(B^2) + \sigma_{\min}^2(C)) - \sigma_{\max}^2(C)(\norm{A}+\norm{B})^2}{2\sigma_{\min}^2(C)+ \lambda_{\min}(A^2) + \lambda_{\min}(B^2)}.$$
\end{proof}

\begin{proof}[Proof of Lemma~\ref{lem:hgd_sc-noncvx}]
	The proof is very similar to that of Lemma~\ref{lem:hgd-noncvx-linear}. Let $C(x_1,x_2) = \nabla^2_{x_1 x_2} g(x_1,x_2)$. For all $x \in \R^{d} \times \R^{d}$, $C(x_1,x_2)$ is square and full rank with bounds on its singular values by assumption. Moreover, \cref{eq:sc-noncvex-cond} holds, so we can apply Lemma~\ref{lem:SC-noncvx-eig} with $H = J$ at each point $x \in \R^{d}\times \R^{d}$. Using the fact that $g$ is smooth in $x_1$ and $x_2$, this gives
	$$\lambda(JJ^\top) \ge \frac{(\sigma_{\min}^2(C(x_1,x_2)) + \lambda_{\min}(A^2))(\sigma_{\min}^2(C(x_1,x_2))+\mu^2) - 4L^2\sigma_{\max}^2(C(x_1,x_2))}{2\sigma_{\min}^2(C(x_1,x_2)) + \lambda_{\min}(A^2) + \mu^2}.$$
	Using the bounds on the singular values of $C(x_1,x_2)$, we have that $JJ^\top \succeq \frac{(\gamma^2 + \lambda_{\min}(A^2))(\gamma^2+\mu^2) -4L^2 \Gamma^2}{2\gamma^2 + \lambda_{\min}(A^2) + \mu^2}I$, so by Lemma~\ref{lem:ham_pl}, $\HH$ satisfies the PL condition with parameter $\frac{(\gamma^2 + \lambda_{\min}(A^2))(\gamma^2+\mu^2) -4L^2 \Gamma^2}{2\gamma^2 + \lambda_{\min}(A^2) + \mu^2}$.
\end{proof}

\section{Proof of \Cref{thm:sgd}}\label{app:sgd}
In this section, we prove \Cref{thm:sgd}. The proof leverages the following theorem from \cite{karimi2016linear}.\footnote{The actual theorem in \cite{karimi2016linear} is stated in a slightly different way, but it is equivalent to our presentation.}
\begin{theorem}[\cite{karimi2016linear}]\label{thm:karimi}
	Assume that $f$ is $L$-smooth, has a non-empty solution set $\XX^*$, and satisfies the PL condition with parameter $\alpha$. Let $v$  be a stochastic estimate of $\nabla f$ such that $E[v] = \nabla f$. Assume $E[\|v(x^{(k)})\|^2] \le C^2$ for all $x^{(k)}$ and some $C$. If we use the SGD update $x^{(k+1)} = x^{(k)} - \eta_k v(x^{(k)})$ with $\eta_k = \frac{2k+1}{2\alpha(k+1)^2}$, then, we get a convergence rate of
	\begin{align}
	\textup{E}[f(x_k) - f^*] \le \frac{LC^2}{2k\alpha^2}
	\end{align}
	If instead we use a constant $\eta_k = \eta < \frac{1}{2\alpha}$, then we obtain a linear convergence rate up to a solution level that is proportional to $\eta$,
	\begin{align}
	\textup{E}[f(x^{(k)})- f^*] \le (1-2\alpha \eta)^k[f(x^{(0)})-f^*] + \frac{LC^2 \eta}{4\alpha}
	\end{align}
\end{theorem}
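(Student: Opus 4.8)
The plan is to reduce both claims to a single one-step expected-descent recursion and then dispatch the two step-size regimes separately. First I would invoke $L$-smoothness through the descent lemma, $f(x^{(k+1)}) \le f(x^{(k)}) + \langle \nabla f(x^{(k)}), x^{(k+1)}-x^{(k)}\rangle + \frac{L}{2}\|x^{(k+1)}-x^{(k)}\|^2$, and substitute the update $x^{(k+1)}-x^{(k)} = -\eta_k v(x^{(k)})$. Taking the conditional expectation given $x^{(k)}$ and using unbiasedness $\E{v(x^{(k)}) \mid x^{(k)}} = \nabla f(x^{(k)})$ collapses the cross term to $-\eta_k\|\nabla f(x^{(k)})\|^2$, while the bounded second moment $\E{\|v(x^{(k)})\|^2}\le C^2$ bounds the quadratic term by $\frac{L\eta_k^2 C^2}{2}$. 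Applying the PL inequality in the form $\|\nabla f(x^{(k)})\|^2 \ge 2\alpha(f(x^{(k)})-f^*)$ and then taking full expectation via the tower property yields, with $\delta_k := \E{f(x^{(k)})-f^*}$, the master recursion $\delta_{k+1} \le (1-2\alpha\eta_k)\delta_k + \frac{L\eta_k^2 C^2}{2}$. This recursion is the heart of the argument; both displayed bounds follow by specializing $\eta_k$.

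For the constant step size $\eta_k = \eta < \frac{1}{2\alpha}$, the contraction factor obeys $0 < 1-2\alpha\eta < 1$, so I would unroll the recursion to obtain $\delta_k \le (1-2\alpha\eta)^k \delta_0 + \frac{L\eta^2 C^2}{2}\sum_{j=0}^{k-1}(1-2\alpha\eta)^j$. Bounding the geometric sum by the infinite series $\sum_{j\ge 0}(1-2\alpha\eta)^j = \frac{1}{2\alpha\eta}$ collapses the noise term to $\frac{L\eta C^2}{4\alpha}$, which is exactly the stated linear-convergence-to-a-floor estimate with $\delta_0 = f(x^{(0)})-f^*$.

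For the decaying schedule $\eta_k = \frac{2k+1}{2\alpha(k+1)^2}$, I would prove $\delta_k \le \frac{LC^2}{2k\alpha^2}$ by induction on $k\ge 1$. The schedule is engineered so that $2\alpha\eta_k = \frac{2k+1}{(k+1)^2}$, whence the contraction factor simplifies to $1-2\alpha\eta_k = \frac{k^2}{(k+1)^2}$. Substituting the inductive hypothesis and $\eta_k^2 = \frac{(2k+1)^2}{4\alpha^2(k+1)^4}$ into the recursion, the target bound $\delta_{k+1}\le \frac{LC^2}{2(k+1)\alpha^2}$ reduces, after clearing $(k+1)^4$, to the elementary inequality $k(k+1)^2 + \frac{(2k+1)^2}{4} \le (k+1)^3$; this holds because $(k+1)^3 - k(k+1)^2 = (k+1)^2 \ge \frac{(2k+1)^2}{4}$, the last step being $4(k+1)^2 \ge (2k+1)^2$. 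The base case is immediate: $\eta_0 = \frac{1}{2\alpha}$ makes the contraction factor vanish, so $\delta_1 \le \frac{L\eta_0^2 C^2}{2} = \frac{LC^2}{8\alpha^2} \le \frac{LC^2}{2\alpha^2}$.

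The main obstacle is the algebraic verification of the inductive step for the decaying schedule: one must confirm that this particular $\eta_k$ is tuned precisely so that the per-step residual noise is absorbed by the shrinking contraction at every iterate. The remaining ingredients—the descent lemma, conditioning to exploit unbiasedness, substituting PL, and the geometric-series estimate—are routine. One subtlety worth stating cleanly is the passage from conditional to full expectation, which requires the second-moment bound $\E{\|v(x^{(k)})\|^2}\le C^2$ to hold uniformly over all (random) iterates, as assumed in the hypothesis.
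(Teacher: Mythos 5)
Your proof is correct, and it matches the source: the paper itself does not prove this theorem but imports it from \cite{karimi2016linear} (see the footnote there), and your argument---the descent lemma plus unbiasedness and the second-moment bound to obtain the one-step recursion $\delta_{k+1}\le(1-2\alpha\eta_k)\delta_k+\tfrac{L\eta_k^2C^2}{2}$, then geometric unrolling for constant $\eta$ and an induction exploiting $1-2\alpha\eta_k=k^2/(k+1)^2$ for the decaying schedule---is essentially the proof given in that reference. All the algebra checks out, including the base case $\eta_0=\tfrac{1}{2\alpha}$ and the key inductive inequality $4(k+1)^2\ge(2k+1)^2$.
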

Now we can prove \Cref{thm:sgd}.

\begin{proof}[Proof of \Cref{thm:sgd}]
	If $\HH$ satisfies the PL condition with parameter $\alpha$, then we can apply \Cref{thm:karimi} to the stochastic variant of HGD. since $\HH^* = 0$, we get
	\begin{align}
	\textup{E}\br{\frac{1}{2}\| \xi(x^{(k)}) \|^2}\le \frac{L_\HH C^2}{2k\alpha^2}
	\end{align}
	The theorem follows from Jensen's inequality, which implies that $\textup{E}\br{\| \xi(x^{(k)}) \|} \le \sqrt{\textup{E}\br{\| \xi(x^{(k)}) \|^2}}$.
\end{proof}

\section{Proof of \Cref{thm:co}}\label{app:co}
In this section, we prove our main result about Consensus Optimization, namely \Cref{thm:co}. The key technical component is showing that HGD still performs well even with small arbitrary perturbations, as we show in the following theorem:

\begin{theorem}\label{thm:co_perturb}
	Let $x^{(k+1)} = x^{(k)} - \eta \nabla \HH(x^{(k)}) + \eta_v v^{(k)}$ where $v^{(k)}$ is some arbitrary vector such that $\norme{v^{(k)}} = \norme{\xi(x^{(k)})}$. Let $g$ be $L_g$-smooth and suppose $\HH$ satisfies the PL condition with parameter $\alpha$. Let $\eta = \frac{1}{L_{\HH}}$ and let $\eta_v = \frac{\alpha}{4L_{\HH} L_g}$. Then we get the following convergence:
	\begin{align}
	\textstyle \norme{\xi(x^{(k)})} \le \pr{1 - \frac{\alpha}{4L_{\HH}}}^{k}\norme{\xi(x^{(0)})}.
	\end{align}
\end{theorem}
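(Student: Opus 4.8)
The plan is to treat this update as gradient descent on the Hamiltonian $\HH$ — which is $L_\HH$-smooth and PL with parameter $\alpha$ — corrupted at each step by the adversarial vector $\eta_v v^{(k)}$. Since $\HH \ge 0$ attains its minimum value $0$ exactly at critical points (by \Cref{as:main}), I would track the potential $\HH(x^{(k)}) = \tfrac12 \norme{\xi(x^{(k)})}^2$ and aim to prove a geometric one-step decrease of the form $\HH(x^{(k+1)}) \le \pr{1 - \tfrac{15\alpha}{16 L_\HH}}\HH(x^{(k)})$, converting back to $\norme{\xi}$ only at the very end. This theorem is exactly what powers \Cref{thm:co}, since the CO update \cref{eq:co_update} with $\eta\gamma = 1/L_\HH$ is this perturbed iteration with the choice $v^{(k)} = -\xi(x^{(k)})$, whose norm equals $\norme{\xi(x^{(k)})}$.

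The central computation is the descent lemma for the $L_\HH$-smooth function $\HH$ applied to the step $\Delta = -\eta \nabla\HH(x^{(k)}) + \eta_v v^{(k)}$:
\begin{align*}
\HH(x^{(k+1)}) \le \HH(x^{(k)}) + \lr{\nabla\HH(x^{(k)}), \Delta} + \tfrac{L_\HH}{2}\norme{\Delta}^2.
\end{align*}
Expanding $\Delta$ yields three groups of terms. With $\eta = 1/L_\HH$ the $\norme{\nabla\HH}^2$ terms combine to $-\tfrac{1}{2L_\HH}\norme{\nabla\HH(x^{(k)})}^2$. The crucial observation is that the cross term in $\lr{\nabla\HH(x^{(k)}), v^{(k)}}$ carries the coefficient $\eta_v(1 - L_\HH \eta)$, which vanishes \emph{exactly} at $\eta = 1/L_\HH$; this is precisely what neutralizes the worst-case perturbation, since otherwise $v^{(k)}$ could align with $\nabla\HH$ and increase the objective. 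What survives is $\HH(x^{(k+1)}) \le \HH(x^{(k)}) - \tfrac{1}{2L_\HH}\norme{\nabla\HH(x^{(k)})}^2 + \tfrac{L_\HH}{2}\eta_v^2 \norme{\xi(x^{(k)})}^2$, using $\norme{v^{(k)}} = \norme{\xi(x^{(k)})}$.

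Next I would invoke the PL inequality $\norme{\nabla\HH}^2 \ge 2\alpha\HH$ together with $\HH = \tfrac12\norme{\xi}^2$ to express both remaining terms as multiples of $\HH(x^{(k)})$, producing the contraction factor $1 - \tfrac{\alpha}{L_\HH} + L_\HH \eta_v^2$. Substituting $\eta_v = \tfrac{\alpha}{4 L_\HH L_g}$ makes the perturbation term $\tfrac{\alpha^2}{16 L_\HH L_g^2}$. To bound it I would use the a priori estimate $\alpha \le L_g^2$, which follows from the hypotheses alone: since $\nabla\HH = J^\top\xi$ and $g$ is $L_g$-smooth (so $\norme{J}\le L_g$) we have $\norme{\nabla\HH} \le L_g\norme{\xi}$, while PL gives $\norme{\nabla\HH}^2 \ge \alpha\norme{\xi}^2$; comparing these at any point with $\xi \ne 0$ forces $\alpha \le L_g^2$ (the case $\xi \equiv 0$ being trivial). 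Hence $L_\HH\eta_v^2 \le \tfrac{\alpha}{16 L_\HH}$, giving $\HH(x^{(k+1)}) \le \pr{1 - \tfrac{15\alpha}{16 L_\HH}}\HH(x^{(k)})$. Iterating, taking square roots via $\norme{\xi(x^{(k)})}^2 = 2\HH(x^{(k)})$, and finally applying the elementary inequality $\sqrt{1 - \tfrac{15t}{16}} \le 1 - \tfrac{t}{4}$ for $t = \alpha/L_\HH \in (0,1]$ (valid since $\alpha \le L_\HH$ for any PL-smooth function) delivers the stated rate.

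The main obstacle is controlling the arbitrary, worst-case perturbation $v^{(k)}$: a naive Cauchy–Schwarz bound on $\lr{\nabla\HH, v^{(k)}}$ would entangle it with the PL-driven decrease and could overwhelm the progress. The two ideas that resolve this are the \emph{exact} cancellation of the cross term at $\eta = 1/L_\HH$ and the a priori bound $\alpha \le L_g^2$, which keeps the residual perturbation term a small fraction of the decrease; everything else is the standard PL-descent argument of \Cref{thm:discrete_pl}.
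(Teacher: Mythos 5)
Your proof is correct, but it takes a genuinely different route from the paper's. The paper splits each iteration into two half-steps: a pure HGD step $x^{(k+1/2)} = x^{(k)} - \eta\nabla\HH(x^{(k)})$, to which it applies the one-step PL contraction \cref{eq:PL-one-step} to get $\norme{\xi(x^{(k+1/2)})} \le (1-\frac{\alpha}{L_\HH})^{1/2}\norme{\xi(x^{(k)})} \le (1-\frac{\alpha}{2L_\HH})\norme{\xi(x^{(k)})}$, followed by the perturbation step, whose effect on $\norme{\xi}$ is bounded by the triangle inequality and the $L_g$-Lipschitzness of $\xi$ as $L_g\eta_v\norme{\xi(x^{(k)})} = \frac{\alpha}{4L_\HH}\norme{\xi(x^{(k)})}$; adding the two gives the stated factor $1-\frac{\alpha}{4L_\HH}$ directly. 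You instead apply the smoothness descent lemma to the combined step and exploit the exact vanishing of the coefficient $\eta_v(1-L_\HH\eta)$ on the cross term $\lr{\nabla\HH, v^{(k)}}$ at $\eta = 1/L_\HH$ — a correct and genuinely useful observation I verified by expanding $\lr{\nabla\HH,\Delta} + \frac{L_\HH}{2}\norme{\Delta}^2$. The consequence is that the perturbation enters your bound only through the second-order term $\frac{L_\HH\eta_v^2}{2}\norme{v^{(k)}}^2$ rather than linearly in $\eta_v$, so you need the extra a priori facts $\alpha \le L_g^2$ and $\alpha \le L_\HH$ (both of which you correctly derive from PL together with $\norme{\nabla\HH} = \norme{J^\top\xi} \le L_g\norme{\xi}$ and $L_\HH$-smoothness, modulo the trivial case $\xi\equiv 0$) to absorb it, and your elementary inequality $\sqrt{1-\frac{15t}{16}} \le 1-\frac{t}{4}$ checks out since it reduces to $t^2 + 7t \ge 0$. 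What each approach buys: the paper's argument is shorter and fully modular (it reuses the PL one-step bound as a black box), while yours yields a strictly better per-step contraction ($\bigl(1-\frac{15\alpha}{16L_\HH}\bigr)^{1/2}$ versus $1-\frac{\alpha}{4L_\HH}$) and, because the perturbation is controlled quadratically, would tolerate a larger $\eta_v$ — of order $\sqrt{\alpha}/L_\HH$ rather than $\alpha/(L_\HH L_g)$ — which in the Consensus Optimization application of \Cref{thm:co} translates into a weaker requirement on the parameter $\gamma$.
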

From \Cref{thm:co_perturb}, it is simple to prove \Cref{thm:co}
\begin{proof}[Proof of \Cref{thm:co}]
	Note that the CO update \cref{eq:co_update} with $\gamma = \frac{4L_g}{\alpha}$ is exactly the update in \Cref{thm:co_perturb} with $v^{(k)} = -\xi(x^{(k)})$, so we get the desired convergence rate.
\end{proof}

Our result treats SGDA as an adversarial perturbation even though this is not the case, which suggests that this analysis may be improved. It would be nice if one could directly apply the PL-based analysis that we used for HGD, but this does not seem to work for CO since CO is not gradient descent on some objective.

Now we prove \Cref{thm:co_perturb}.

\begin{proof}[Proof of \Cref{thm:co_perturb}]
	Let $x^{(k+1/2)} = x^{(k)} -\eta \nabla \HH(x^{(k)})$, so $x^{(k+1)} = x^{(k+1/2)} + \eta_v v^{(k)}$. From \cref{eq:PL-one-step} in the proof of \Cref{thm:discrete_pl} with $\eta = \frac{1}{L_\HH}$, we get
	\begin{align}
	\textstyle \norme{\xi(x^{(k+1/2)})} \le \pr{1 - \frac{\alpha}{L_\HH}}^{1/2} \norme{\xi(x^{(k)})} \le (1-\frac{\alpha}{2L_{\HH}})  \norme{\xi(x^{(k)})}.
	\end{align}
	Next, note that the triangle inequality and smoothness of $g$ imply:
	\begin{align}
	\norme{\xi(x^{(k+1)})} &\le \norme{\xi(x^{(k+1/2)})} + \norme{\xi(x^{(k+1)}) - \xi(x^{(k+1/2)})} \\
	&\le \norme{\xi(x^{(k+1/2)})} + L_g \norme{x^{(k+1)} - x^{(k+1/2)}}\\
	&= \norme{\xi(x^{(k+1/2)})} + L_g\norme{\eta_v v}
	\end{align}
	Using the above result and $\norme{v^{(k)}} = \norme{\xi(x^{(k)})}$, we get:
	\begin{align}
	\norme{\xi(x^{(k+1)})} &\le \pr{1-\frac{\alpha}{2L_{\HH}} + L_g \eta_v} \norme{\xi(x^{(k)})}
	\end{align}
	Setting $\eta_v = \frac{\alpha}{4 L_{\HH} L_g}$ gives the result.
\end{proof}
Note that for this result, we assume $g$ is $L_g$ smooth in $x_1$ and $x_2$ jointly, whereas in other parts of the paper we assume $g$ is smooth in $x_1$ or $x_2$ separately. If $g$ is $L$-smooth in $x_1$ and $L$-smooth in $x_2$ and $\norme{\nabla^2_{x_1x_2} g(x_1,x_2)}\le L_c$ for all $x_1,x_2$, then $g$ will be $L+L_c$ smooth.

\section{Experiments}\label{app:expts}
In this section, we present some experimental results showing how SGDA, HGD, and CO perform on a convex-concave objective and a nonconvex-nonconcave objective. For our CO plots, $\gamma$ refers to the $\gamma$ parameter in the CO algorithm. All of our experiments are initialized at $(5,5)$. The step-size $\eta$ for HGD and SGDA is always $0.01$, while the step-size $\eta$ for CO with $\gamma = \{0.1,1,10\}$ is $\{0.1, 0.01, 0.001\}$ respectively to account for the fact that increasing $\gamma$ increases the effective step-size, so the $\eta$ parameter needs to be decreased accordingly. The experiments were all run on a standard 2017 Macbook Pro.

The main takeaways from the experiments are that CO with low $\gamma$ will not converge if there is a large bilinear term, while CO with high $\gamma$ and HGD all converge for small and large bilinear terms. When the bilinear term is large, CO with high $\gamma$ and HGD both will converge in fewer iterations (for the same step-size). We did not optimize for step-size, so it is possible this effect may change if the optimal step-size is chosen for each setting.

\subsection{Convex-concave objective}
The convex-concave objective we use is $g(x_1,x_2) = f(x_1) + cx_1 x_2 - f(x_2)$ where $f(x) = \log(1+e^x)$. We show a plot of $f$ in \Cref{fig:convex_F}.

\begin{figure}[!htb]
	\begin{center}
		\includegraphics[width=10cm]{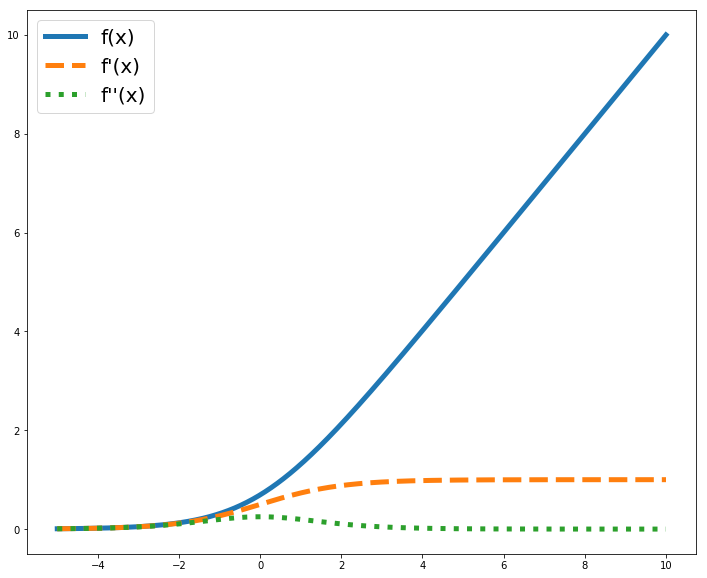}
	\end{center}
	\caption{Plot of $f(x) = \log(1+e^x)$ with its first and second derivatives. This is a convex, smooth function \label{fig:convex_F}}
\end{figure}

When $c = 3$, SGDA converges, and when $c = 10$, SGDA diverges. We note that HGD and CO (for large enough $\gamma$) tend to converge faster when $c$ is larger.

\FloatBarrier

\subsubsection{SGDA converges ($c = 3$)}
These plots show $g$ when $c = 3$, so SGDA converges, as does CO with $\gamma = 0.1$.

\begin{figure}[!htb]
	\centering
	\subfigure[]{\includegraphics[width=10cm]{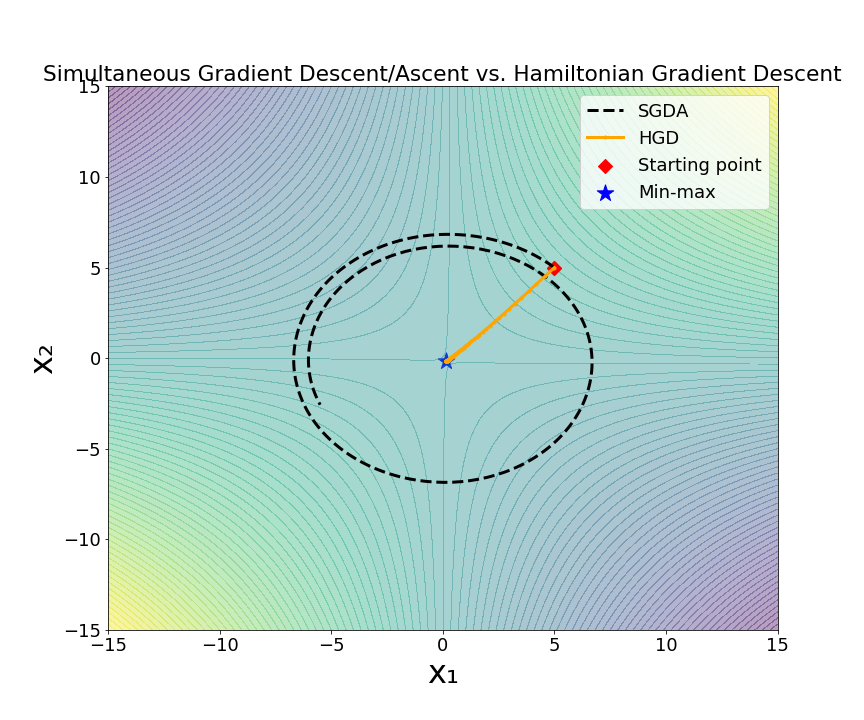}}
	\subfigure[]{\includegraphics[width=12cm]{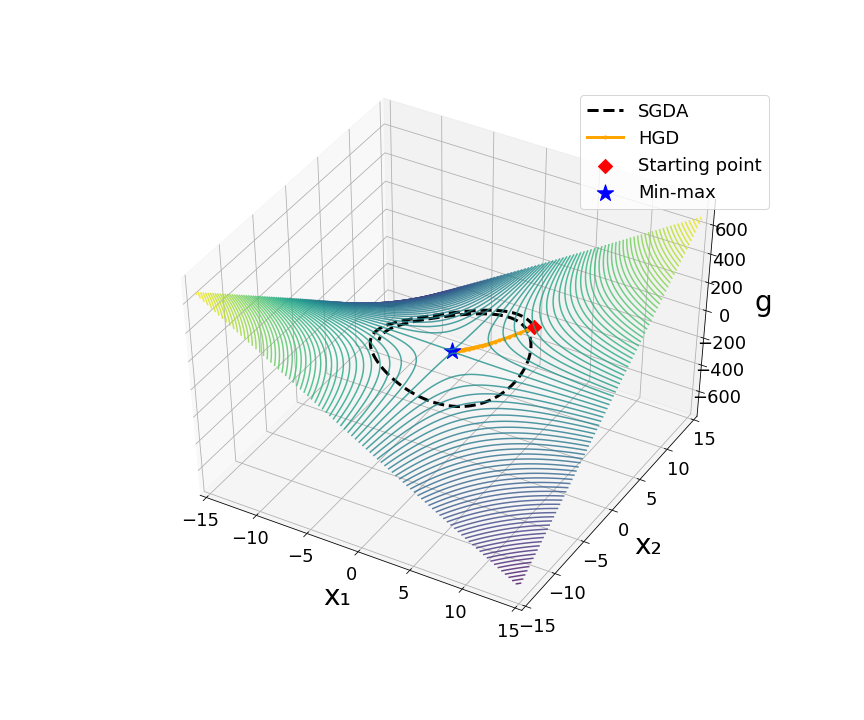}}
	\caption{SGDA vs. HGD for 300 iterations for $g(x_1,x_2) = f(x_1) + cx_1 x_2 - f(x_2)$ where $f(x) = \log(1+e^x)$ and $c = 3$. SGDA slowly circles towards the min-max, and HGD goes directly to the min-max.}
\end{figure}

\begin{figure}[!htb]
	\centering
	\subfigure[]{\includegraphics[width=10cm]{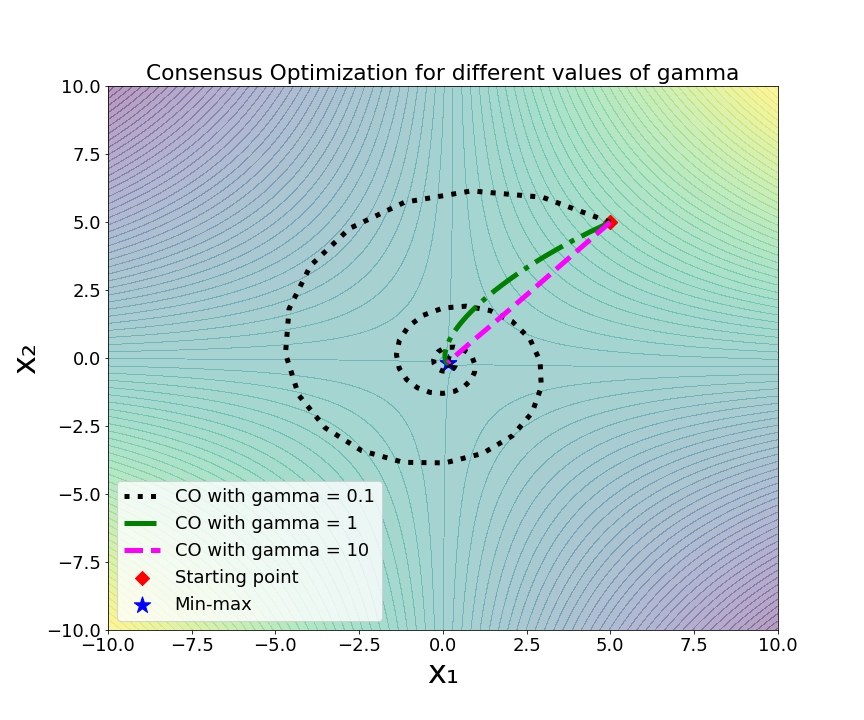}}
	\subfigure[]{\includegraphics[width=12cm]{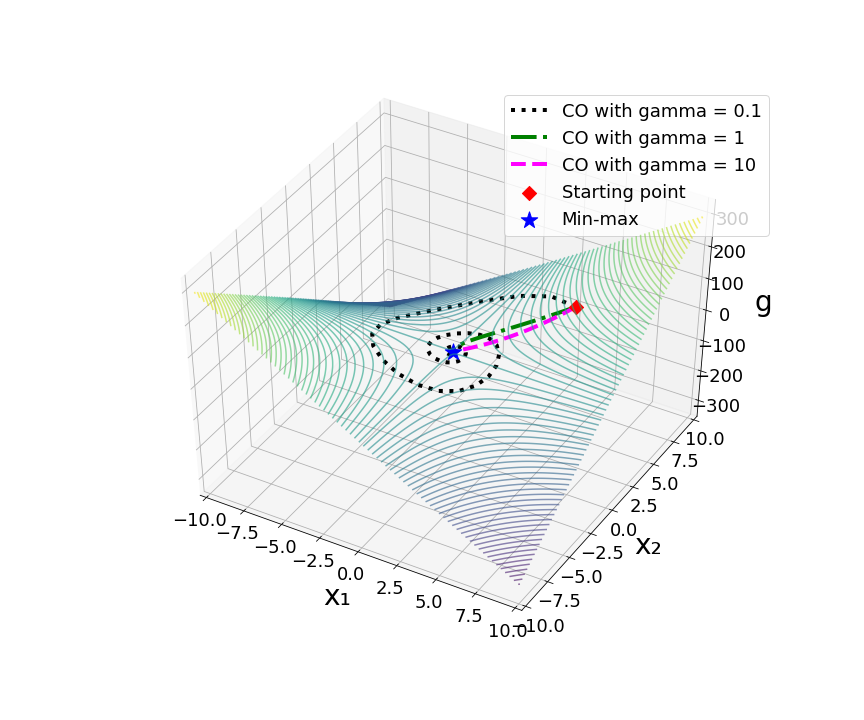}}
	\caption{CO for 100 iterations with different values of $\gamma$ for $g(x_1,x_2) = f(x_1) + cx_1 x_2 - f(x_2)$ where $f(x) = \log(1+e^x)$ and $c = 3$. The $\gamma = 0.1$ curve slowly circles towards the min-max, while the other curves go directly to the min-max.}
\end{figure}

\begin{figure}[!htb]
	\centering
	\subfigure[]{\includegraphics[width=10cm]{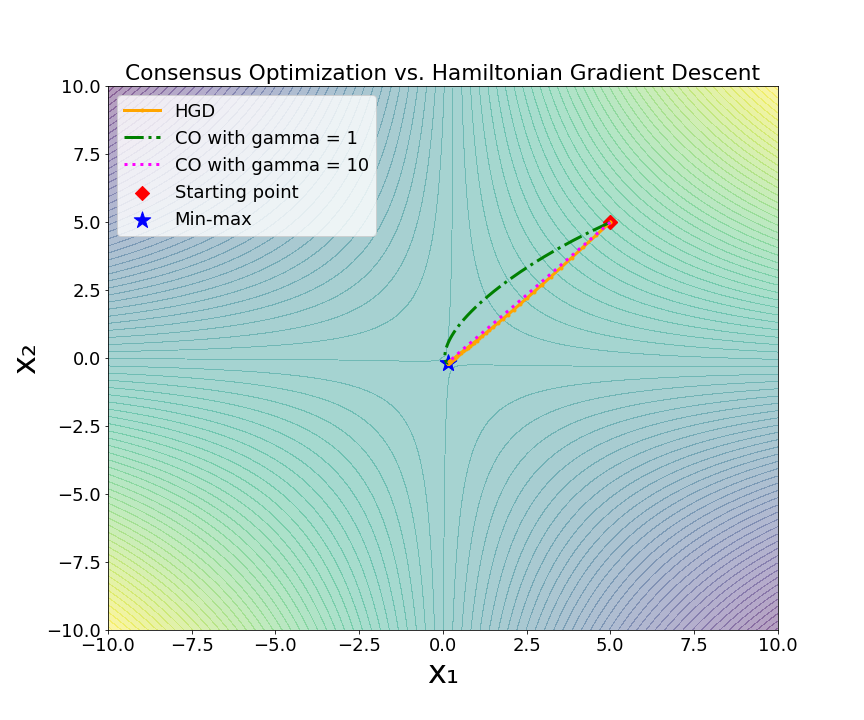}}
	\subfigure[]{\includegraphics[width=12cm]{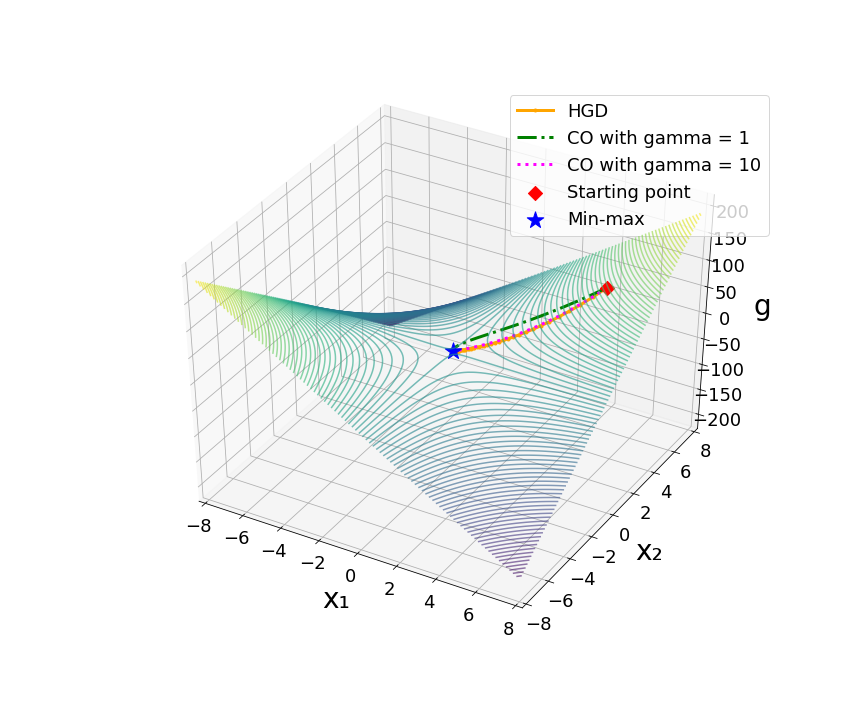}}
	\caption{HGD vs. CO for 100 iterations for $g(x_1,x_2) = f(x_1) + cx_1 x_2 - f(x_2)$ where $f(x) = \log(1+e^x)$ and $c = 3$ with different values of $\gamma$.}
\end{figure}

\FloatBarrier

\subsubsection{SGDA diverges ($c = 10$)}
These plots show $g$ when $c = 10$, so SGDA diverges, as does CO with $\gamma = 0.1$. Note that in this case, CO with $\gamma \ge 1$ and HGD both require very few iterations (typically about 2) to reach the min-max.

\begin{figure}[!htb]
    \centering
    \subfigure[]{\includegraphics[width=9.5cm]{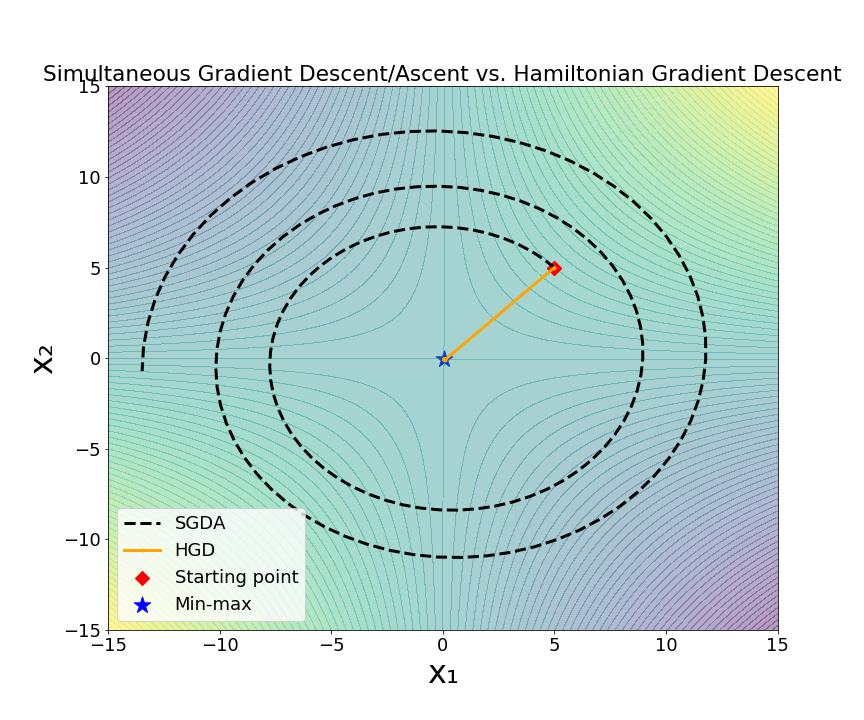}}
    \subfigure[]{\includegraphics[width=12cm]{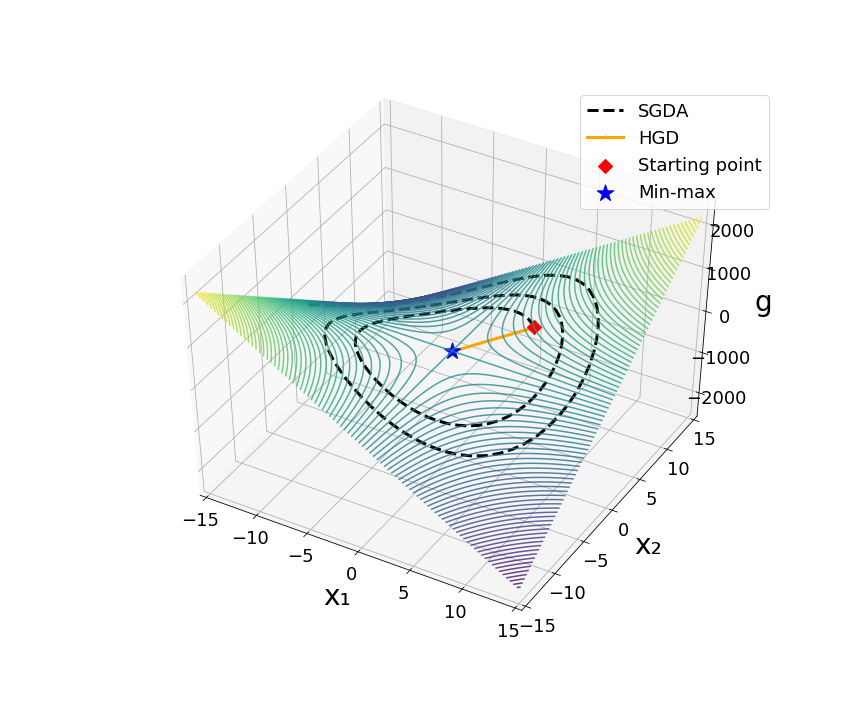}}
	\caption{SGDA vs. HGD for 150 iterations for $g(x_1,x_2) = f(x_1) + cx_1 x_2 - f(x_2)$ where $f(x) = \log(1+e^x)$ and $c = 10$. SGDA slowly circles away from the min-max, while HGD goes directly to the min-max.}
\end{figure}

\begin{figure}[!htb]
	\centering
	\subfigure[]{\includegraphics[width=10cm]{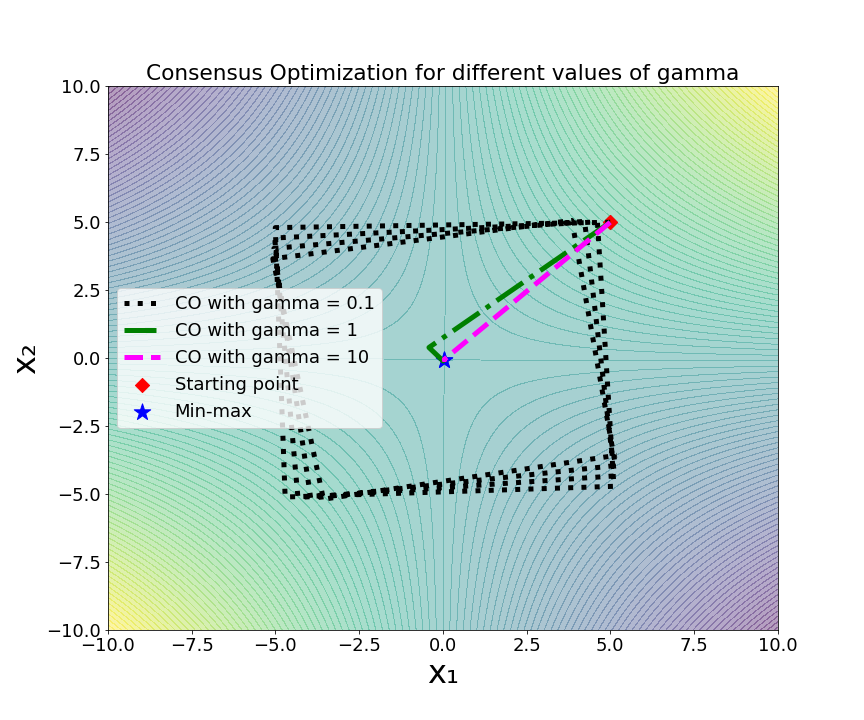}}
	\subfigure[]{\includegraphics[width=12cm]{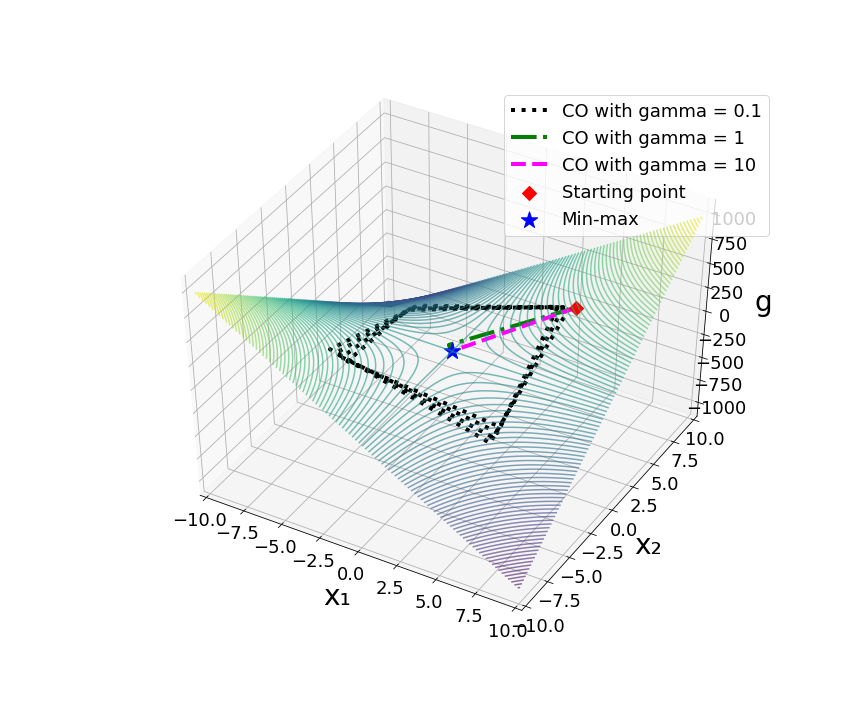}}
	\caption{CO for 15 iterations with different values of $\gamma$ for $g(x_1,x_2) = f(x_1) + cx_1 x_2 - f(x_2)$ where $f(x) = \log(1+e^x)$ and $c = 10$. The $\gamma = 0.1$ curve makes a cyclic pattern around the min-max, while the other curves go directly to the min-max.}
\end{figure}

\begin{figure}[!htb]
	\centering
	\subfigure[]{\includegraphics[width=10cm]{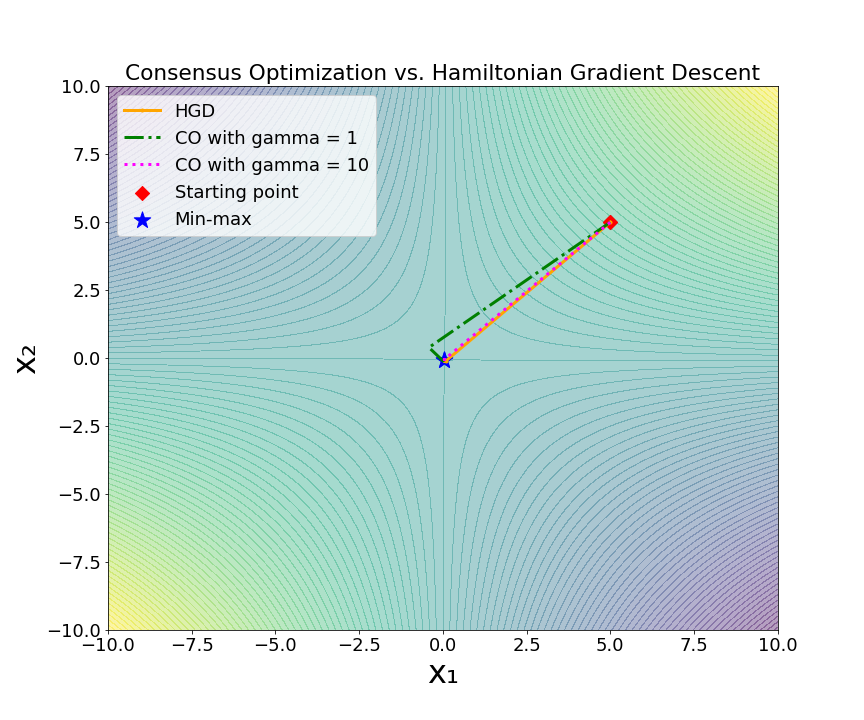}}
	\subfigure[]{\includegraphics[width=12cm]{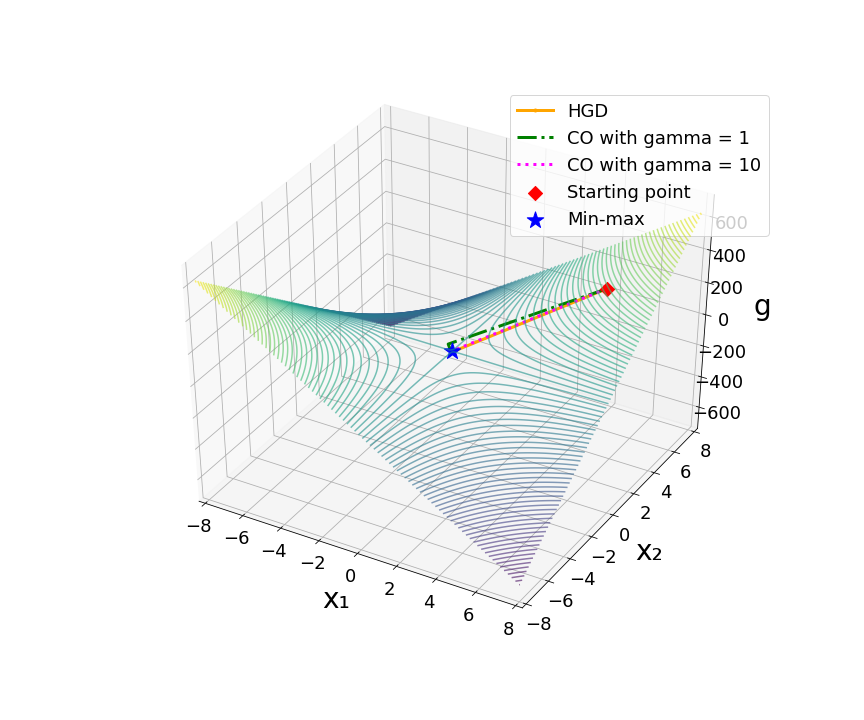}}
	\caption{HGD vs. CO for 15 iterations with different values of $\gamma$ for $g(x_1,x_2) = f(x_1) + cx_1 x_2 - f(x_2)$ where $f(x) = \log(1+e^x)$ and $c = 10$.}
\end{figure}

\FloatBarrier

\subsection{Nonconvex-nonconcave objective}
The nonconvex-nonconcave objective we use is $g(x_1,x_2) = F(x_1) + cx_1 x_2 - F(x_2)$ where $F$ is defined as in \cref{eq:F} in \Cref{app:example}.
\begin{align}\label{eq:FF}
F(x) = \begin{cases}
-3(x+\frac{\pi}{2})  &\text{for } x \le -\frac{\pi}{2} \\
-3\cos x &\text{for }  -\frac{\pi}{2} < x \le \frac{\pi}{2}\\
-\cos x + 2x - \pi &\text{for } x > \frac{\pi}{2}
\end{cases}
\end{align}
We show a plot of $F$ in \Cref{fig:FF}.

\begin{figure}[!htb]
	\begin{center}
		\includegraphics[width=10cm]{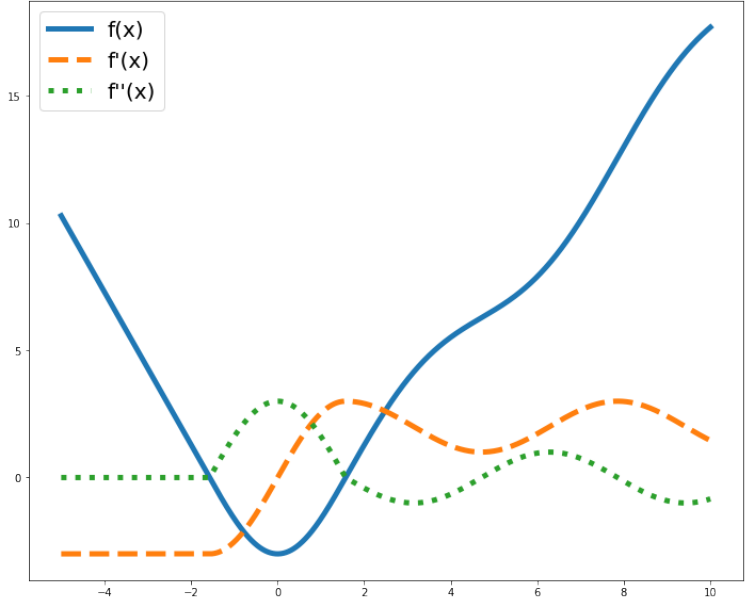}
	\end{center}
	\caption{Plot of nonconvex function $F(x)$ defined in \cref{eq:F}, as well as its first and second derivatives  \label{fig:FF}}
\end{figure}

As in the convex-concave case, when $c = 3$, SGDA converges, and when $c = 10$, SGDA diverges. Again, HGD and CO (for large enough $\gamma$) tend to converge faster when $c$ is larger.

\FloatBarrier

\subsubsection{SGDA converges ($c = 3$)}
These plots show $g$ when $c = 3$, so SGDA converges, as does CO with $\gamma = 0.1$.

\begin{figure}[!htb]
	\centering
	\subfigure[]{\includegraphics[width=10cm]{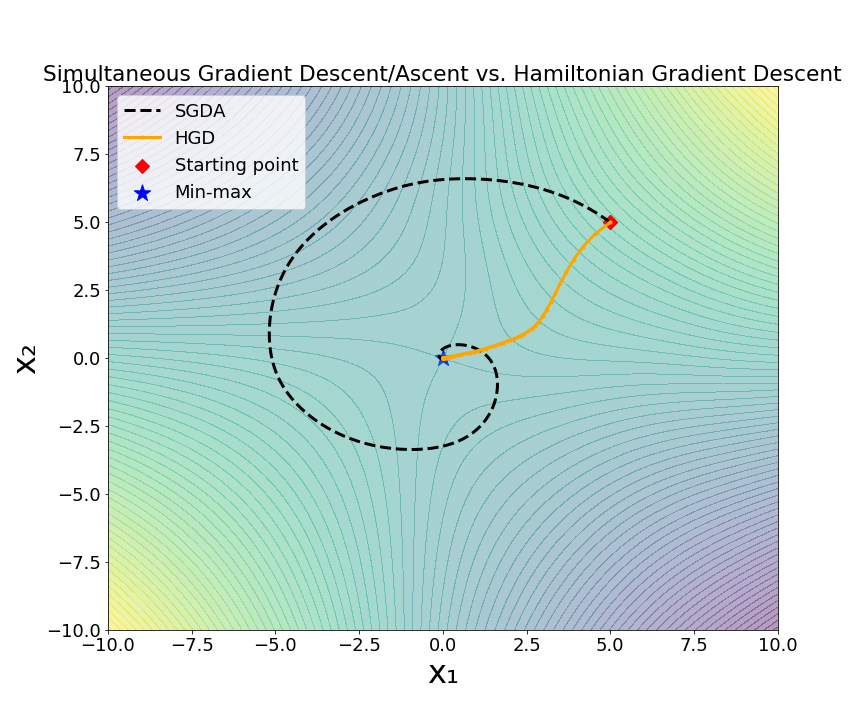}}
	\subfigure[]{\includegraphics[width=12cm]{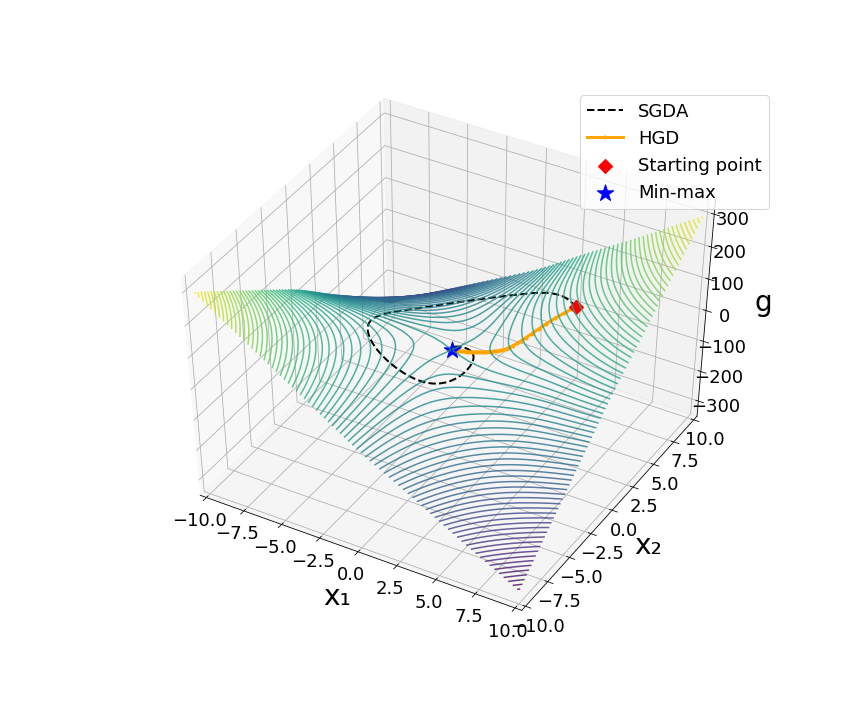}}
	\caption{SGDA vs. HGD for 300 iterations for $g(x_1,x_2) = F(x_1) + cx_1 x_2 - F(x_2)$ where $F(x)$ is defined in \cref{eq:FF} and $c = 3$. SGDA slowly circles towards the min-max, and HGD goes more directly to the min-max.}
\end{figure}

\begin{figure}[!htb]
	\centering
	\subfigure[]{\includegraphics[width=10cm]{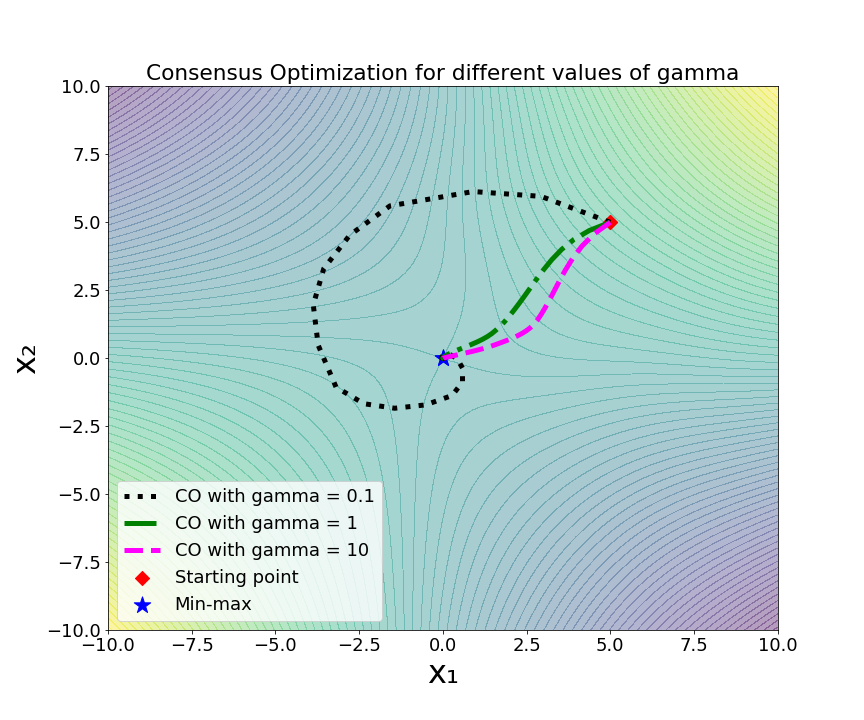}}
	\subfigure[]{\includegraphics[width=12cm]{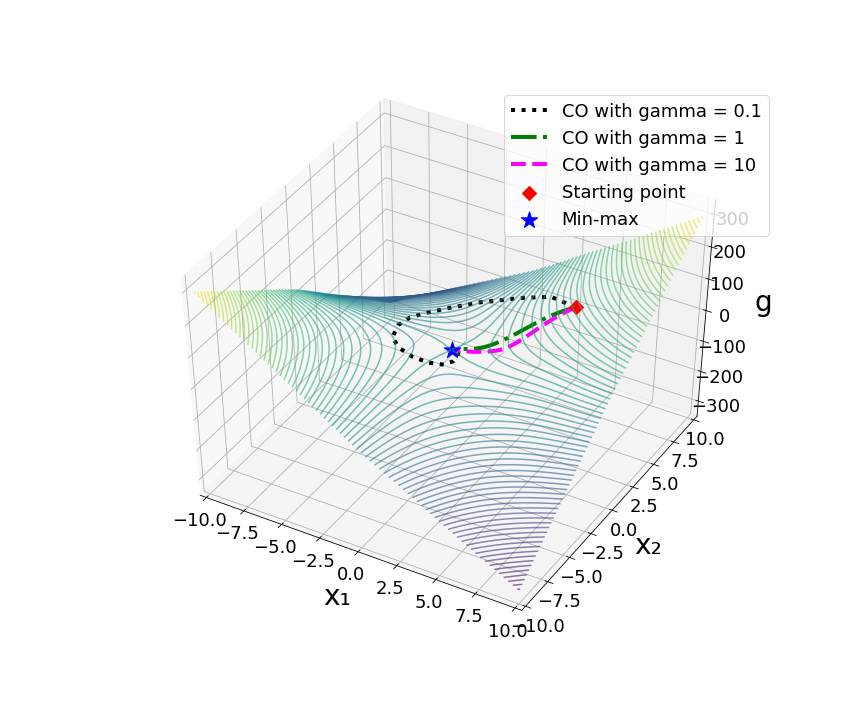}}
	\caption{CO for 100 iterations with different values of $\gamma$ for $g(x_1,x_2) = F(x_1) + cx_1 x_2 - F(x_2)$ where $F(x)$ is defined in \cref{eq:FF} and $c = 3$. The $\gamma = 0.1$ curve slowly circles towards the min-max, while the other curves go more directly to the min-max.}
\end{figure}

\begin{figure}[!htb]
	\centering
	\subfigure[]{\includegraphics[width=10cm]{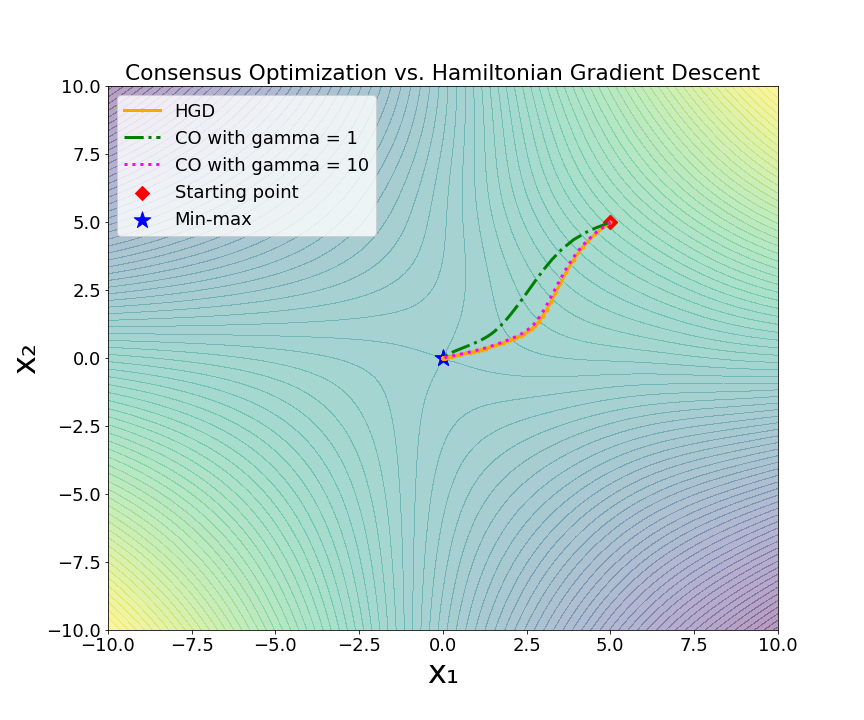}}
	\subfigure[]{\includegraphics[width=12cm]{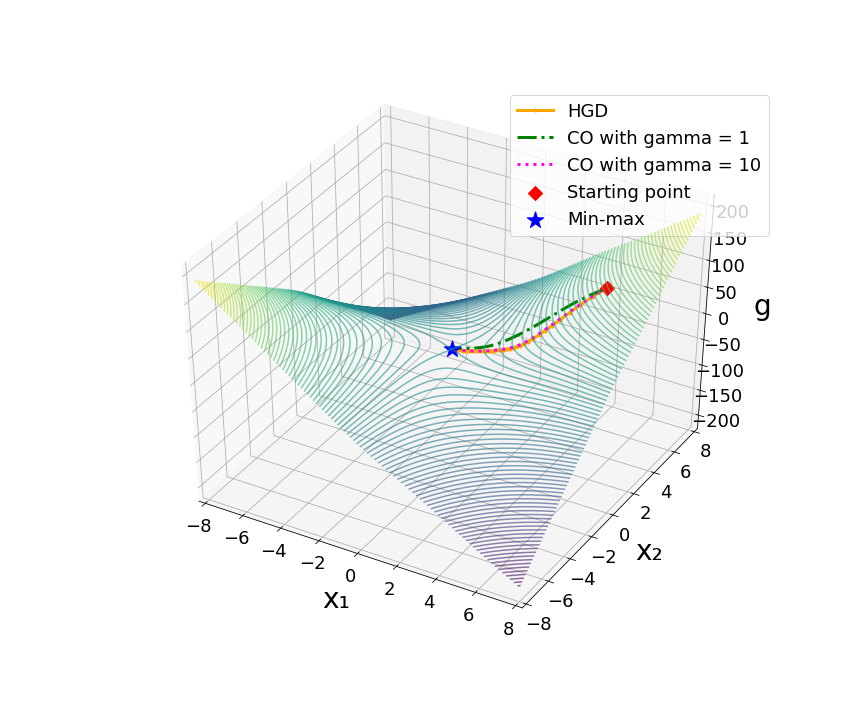}}
	\caption{HGD vs. CO for 100 iterations for $g(x_1,x_2) = F(x_1) + cx_1 x_2 - F(x_2)$ where $F(x)$ is defined in \cref{eq:FF} and $c = 3$ with different values of $\gamma$.}
\end{figure}

\FloatBarrier

\subsubsection{SGDA diverges ($c = 10$)}
These plots show $g$ when $c = 10$, so SGDA diverges, as does CO with $\gamma = 0.1$. Note that in this case, CO with $\gamma \ge 1$ and HGD both require very few iterations (typically about 2) to reach the min-max.

\begin{figure}[!htb]
	\centering
	\subfigure[]{\includegraphics[width=9.5cm]{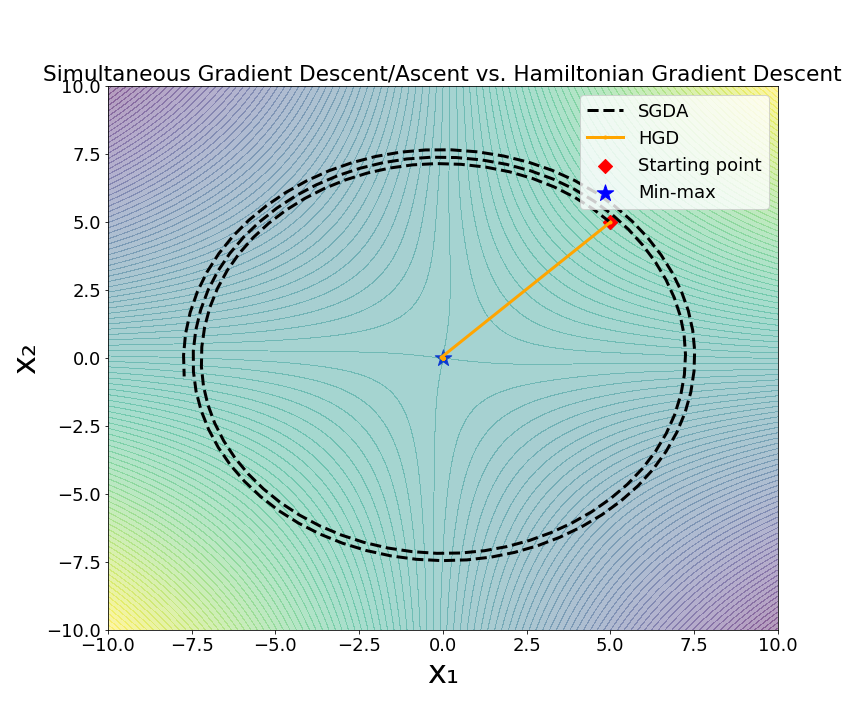}}
	\subfigure[]{\includegraphics[width=12cm]{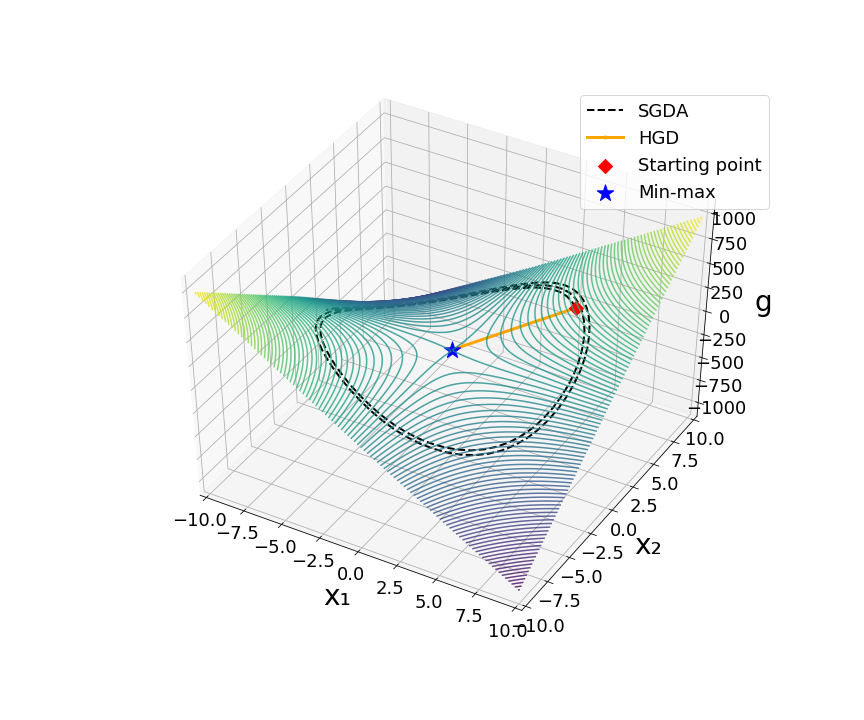}}
	\caption{SGDA vs. HGD for 150 iterations for $g(x_1,x_2) = F(x_1) + cx_1 x_2 - F(x_2)$ where $F(x)$ is defined in \cref{eq:FF} and $c = 10$. SGDA slowly circles away from the min-max, while HGD goes directly to the min-max.}
\end{figure}

\begin{figure}[!htb]
	\centering
	\subfigure[]{\includegraphics[width=10cm]{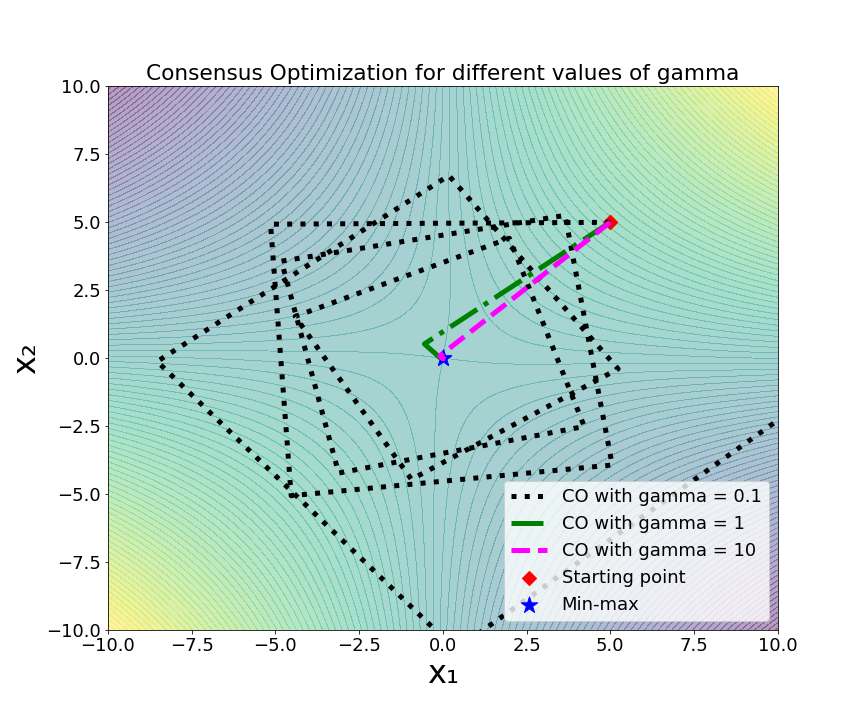}}
	\subfigure[]{\includegraphics[width=12cm]{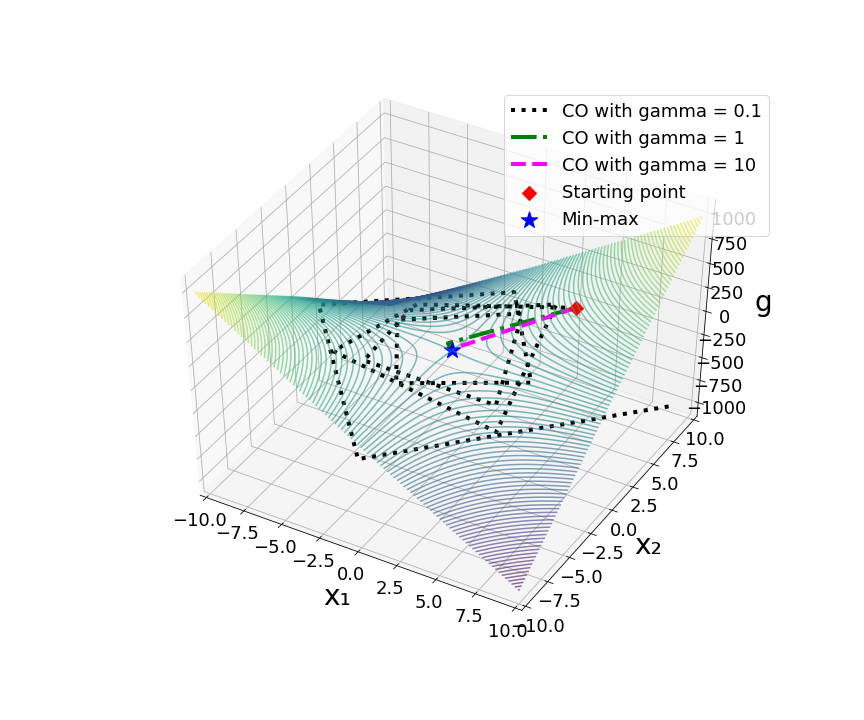}}
	\caption{CO for 15 iterations with different values of $\gamma$ for $g(x_1,x_2) = F(x_1) + cx_1 x_2 - F(x_2)$ where $F(x)$ is defined in \cref{eq:FF} and $c = 10$. The $\gamma = 0.1$ curve makes an erratic cycle around the min-max, slowly diverging, while the other curves go directly to the min-max.}
\end{figure}

\begin{figure}[!htb]
	\centering
	\subfigure[]{\includegraphics[width=10cm]{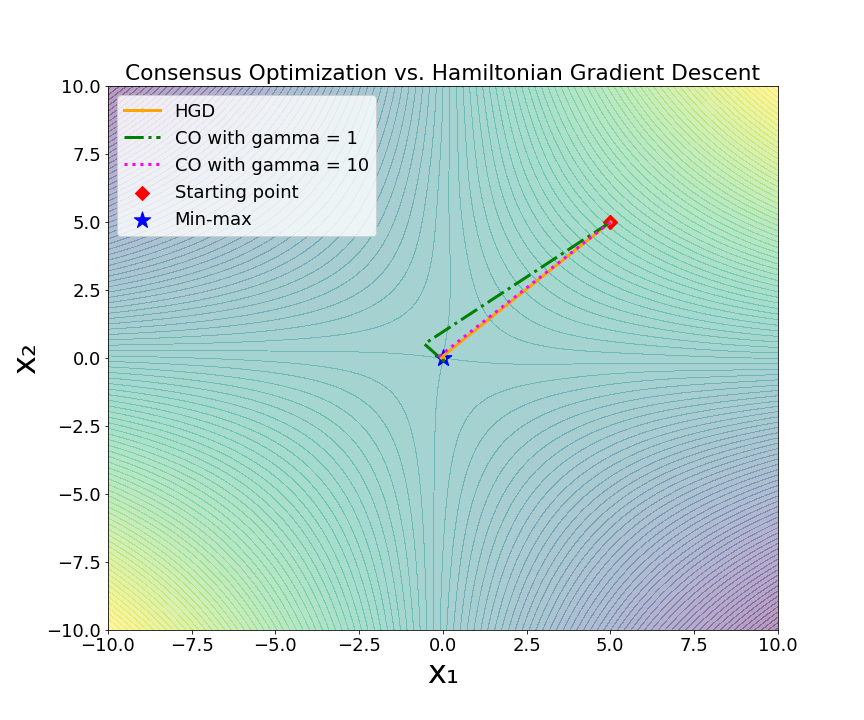}}
	\subfigure[]{\includegraphics[width=12cm]{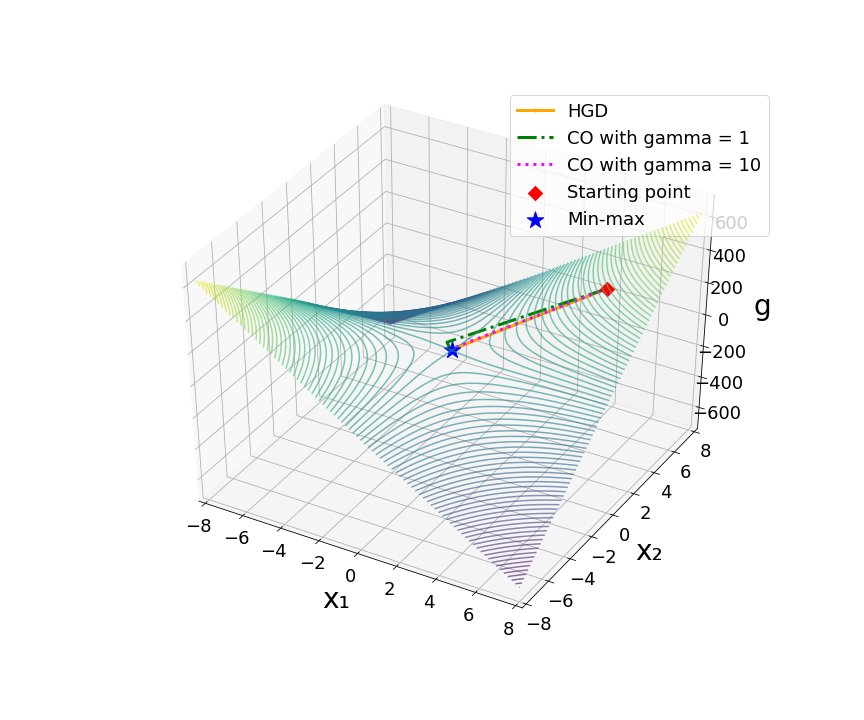}}
	\caption{HGD vs. CO for 15 iterations with different values of $\gamma$ for $g(x_1,x_2) = F(x_1) + cx_1 x_2 - F(x_2)$ where $F(x)$ is defined in \cref{eq:FF} and $c = 10$.}
\end{figure}

\FloatBarrier

\subsection{Convergence of HGD for nonconvex-nonconvex objective with different-sized bilinear terms}
In this section, we look at the convergence of HGD for the same objective as discussed in the previous section, namely $g(x_1,x_2) = F(x_1) + cx_1 x_2 - F(x_2)$ where $F$ is defined as in \cref{eq:F} in \Cref{app:example}.
\begin{align}
F(x) = \begin{cases}
-3(x+\frac{\pi}{2})  &\text{for } x \le -\frac{\pi}{2} \\
-3\cos x &\text{for }  -\frac{\pi}{2} < x \le \frac{\pi}{2}\\
-\cos x + 2x - \pi &\text{for } x > \frac{\pi}{2}
\end{cases}
\end{align}

In this case, we will vary $c$ to show that HGD converges faster for higher $c$ and will not converge for sufficiently low $c$.

\begin{figure}[!htb]
	\centering
\includegraphics[width=14cm]{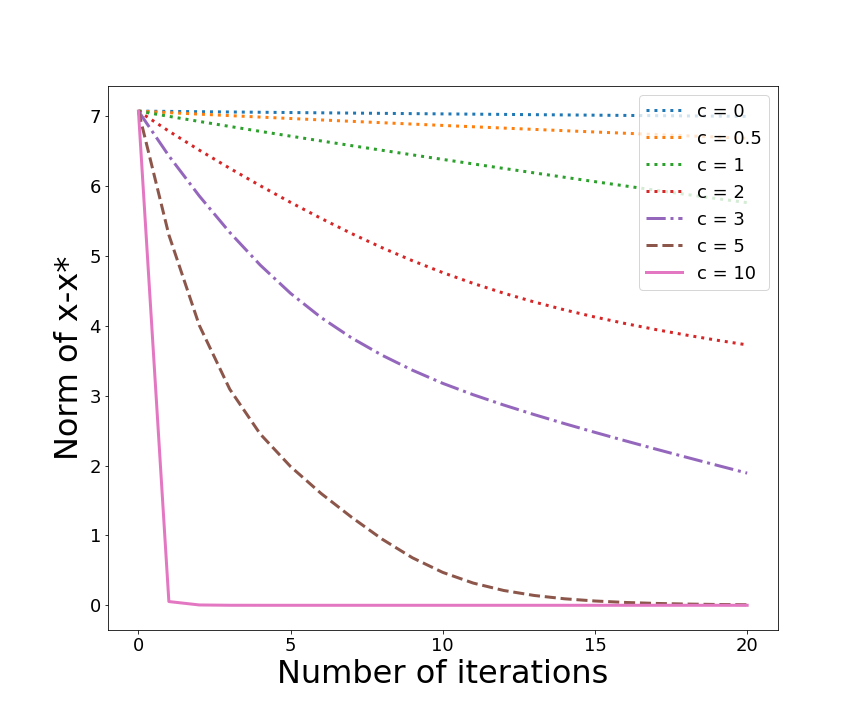}
\caption{Distance to minmax for HGD iterates for different values of $c$ in the objective $g(x_1,x_2) = F(x_1) + cx_1 x_2 - F(x_2)$ where $F(x)$ is defined in \cref{eq:FF}.}
\end{figure}

\begin{figure}[!htb]
	\centering
	\includegraphics[width=14cm]{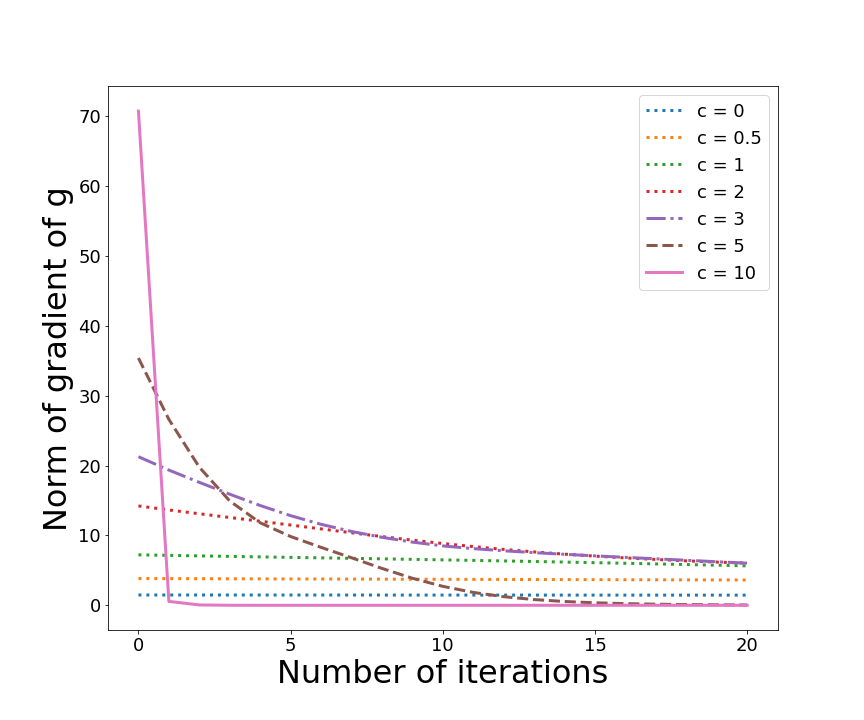}
\caption{Gradient norm for HGD iterates for different values of $c$ in the objective $g(x_1,x_2) = F(x_1) + cx_1 x_2 - F(x_2)$ where $F(x)$ is defined in \cref{eq:FF}. Since all runs are initialized at $(5,5)$, when $c$ is increased, the initial gradient norm also increases. Nonetheless, HGD still converges faster for the cases with higher $c$.}
\end{figure}

\end{document}